\newcommand*{\mailto}[1]{\href{mailto:#1}{\nolinkurl{#1}}}
\newtheorem{theorem}{Theorem}[section]
\newtheorem{corollary}[theorem]{Corollary}
\newtheorem{proposition}[theorem]{Proposition}
\theoremstyle{definition}
\newtheorem{remark}[theorem]{Remark}
\DeclareMathOperator*{\esssup}{ess\,sup}
\def\ol{\overline}
\def\ul{\underline}
\newcommand{\nn}{\nonumber}
\def\qq{q}
\def\cT{\mathcal{T}}
\def\cW{\mathcal{W}}
\def\psii{u}
\def\N{\mathbb{N}}
\newcommand{\hG}{\hat{\Gamma}}
\newcommand{\Om}{\Omega}
\def\R{\mathbb{R}}
\def\L2{L^2(\Omega)}
\def\LT{L^2(0,T;L^2(\Omega))}
\def\LiT{L^{\infty}(0,T;L^2(\Omega))}
\def\LqT{L^{q+1}(0,T;L^{q+1}(\Omega))}
\def\Chl{C^{\Omega}_{H^1,L^4}}
\def\Cwl{C^{\Omega}_{W^{1,q+1},L^{\infty}}}
\def\qCwl{C^{\Omega}_{W^{1,q+1},L^{\infty}}}
\numberwithin{equation}{section}
\begin{document}

\title[Local existence results]{Local existence results for the Westervelt equation with nonlinear damping and Neumann as well as absorbing boundary conditions}

\author[V. Nikoli\' c]{\vspace{1mm} Vanja Nikoli\' c}
\address{Insitut f\"ur Mathematik\\ Universit\"at Klagenfurt\\
Universit\"atsstra{\ss}e 65-57\\ 9020 Klagenfurt am W\"orthersee\\ Austria}
\email{\mailto{vanja.nikolic@aau.at}}
\begin{abstract} \bigskip \vspace{2mm}
We investigate the Westervelt equation with several versions of nonlinear damping and lower order damping terms and Neumann as well as  absorbing boundary conditions. We prove local in time existence of weak solutions under the assumption that the initial and boundary data are sufficiently small. Additionally, we prove local well-posedness in the case of spatially varying $L^{\infty}$ coefficients, a model relevant in high intensity focused ultrasound (HIFU) applications. 
\end{abstract}
\keywords{nonlinear acoustics, Westervelt's equation, local existence}
\thanks{Research supported by the Austrian Science Fund (FWF): P24970}
\subjclass[2010]{Primary: 35L05; Secondary: 35L20.}
\maketitle
\medskip
\section{Introduction}
High intensity focused ultrasound (HIFU) is crucial in many medical and industrial applications including lithotripsy, thermotherapy, ultrasound cleaning or welding and sonochemistry. Widely used mathematical model for nonlinear wave propagation is the Westervelt equation, which can either be written in terms of the acoustic pressure $p$
\begin{align} \label{press_form}
(1-2kp)p_{tt} -c^2 \Delta p - b\Delta p_t = 2k(p_t)^2,
\end{align}
or in terms of the acoustic velocity potential $\psi$
\begin{equation}\label{poten_form}
(1-2\tilde{k}\psi_t)\psi_{tt} -c^2 \Delta \psi - b\Delta\psi_t 
= 0 ,
\end{equation}
with $\varrho \psi_t = p$. Here, $c$ denotes the speed and 
$b$ the diffusivity of sound, $k = \beta_a / \lambda$, $\beta_a = 1 + B/(2A)$, $B/A$ represents the parameter of nonlinearity, $\varrho$ is the mass density, $\lambda=\varrho c^2$ is the bulk modulus and $\tilde{k}=\varrho k$. For a detailed derivation of \eqref{press_form} and \eqref{poten_form} we refer the reader to \cite{HamiltonBlackstock}, \cite{manfred}, \cite{Westervelt}.\\
\indent Well-posedness and exponential decay of small and $H^2-$spatially regular solutions is established for the Westervelt equation with homogeneous~\cite{KL08} and inhomogeneous~\cite{KLV10} Dirichlet 
and Neumann~\cite{Neumann} boundary conditions as well as with boundary instead of interior damping~\cite{K10}.   \\
\indent A significant task in the analysis of the Westervelt equation is avoiding degeneracy of the coefficient $1-2kp$ for the second time derivative $p_{tt}$ in \eqref{press_form} and, similarly, of the term $1-2\tilde{k}\psi_t$ in the formulation \eqref{poten_form}.  At the same time, in applications the existence of spatially less regular solutions is important, e.g. in the coupling of acoustic with acoustic or elastic regions with different material parameters. In \cite{brunn}, Brunnhuber, Kaltenbacher and Radu treated this issue by introducing nonlinear damping terms to the Westervelt equation and considering the following equations
\begin{align}
&(1-2ku)u_{tt}-c^2\Delta u-b\,\text{div}\Bigl(((1-\delta) +\delta|\nabla u_t|^{\qq-1})\nabla u_t\Bigr) 
=2k(u_t)^2, \label{Westervelt1} \\
& (1-2ku)u_{tt}-c^2 \text{div} (\nabla u +\varepsilon |\nabla u|^{q-1}\nabla u)
-b \Delta u_t =2k(u_t)^2, \label{Westervelt2} \\
&u_{tt}-\frac{c^2}{1-2\tilde{k}u_t}\Delta u
-b\,\text{div}\Bigl(((1-\delta) +\delta|\nabla u_t|^{\qq-1})\nabla u_t\Bigr)=0, \label{Westervelt3} 
\end{align}
with homogeneous Dirichlet boundary data. First two equations are derived from the Westervelt equation in the acoustic pressure formulation \eqref{press_form}, while the third equation comes from the acoustic potential formulation \eqref{poten_form} (with the notation changed to $p \rightarrow u$, $\psi \rightarrow u$).  Added nonlinear damping terms make obtaining $L^{\infty}(0,T;L^{\infty}(\Om))$ estimate on $u$ ($u_t$) possible, without the need to estimate $\Delta u$ ($\Delta u_t$) and thus refraining from too high regularity. \\
\indent The central aim of the present paper is to investigate this relaxation of regularity by nonlinear damping, but equipped with practically relevant absorbing and Neumann boundary data. This is motivated by many applications of high intensity focused ultrasound where the need for more realistic boundary conditions is evident. E.g. in lithotripsy one faces the problem of a physically unbounded domain, as typical in acoustics, which should be truncated for numerical computations.  Absorbing boundary conditions are then used to avoid reflections on the artificial boundary $\hG$ of the computational domain. \\
\indent Ultrasound excitation, e.g. by piezoelectric transducers, can be modeled by Neumann boundary conditions on the rest of the boundary $\Gamma=\partial \Om \setminus \hG$. \\
\indent In our case, the design of the nonlinear absorbing and inhomogeneous Neumann boundary conditions is influenced by the presence of the nonlinear strong damping in the equations.
We will study initial boundary value problems of the following type:
\begin{align}
\begin{split}\label{W1_beta}
\begin{cases}
(1-2ku)u_{tt}-c^2\Delta u-b\,\text{div}\Bigl(((1-\delta) +\delta|\nabla u_t|^{q-1})\nabla u_t\Bigr) +\beta u_t \\
 =2k(u_t)^2 
 \, \text{ in } \Omega \times (0,T],
\vspace{2mm} \\
c^2 \frac{\partial u}{\partial n}+b((1-\delta)+\delta|\nabla u_t|^{q-1})\frac{\partial u_t}{\partial n}=g  \ \ \text{on} \ \Gamma \times (0,T],
\vspace{2mm} \\
\alpha u_t+c^2 \frac{\partial u}{\partial n}+b((1-\delta)+\delta|\nabla u_t|^{q-1})\frac{\partial u_t}{\partial n}=0  \ \ \text{on} \ \hat{\Gamma}\times (0,T],
\vspace{2mm} \\
(u,u_t)=(u_0, u_1) \ \ \text{on} \ \overline{\Om}\times \{t=0\}
,\\
\end{cases}
\end{split}\\ \nn \\
\begin{split} \label{W1_gamma}
\begin{cases}
(1-2ku)u_{tt}-c^2\Delta u-b\,\text{div}\Bigl(((1-\delta) +\delta|\nabla u_t|^{\qq-1})\nabla u_t\Bigr) +\gamma |u_t|^{q-1} u_t  \\
=2k(u_t)^2 
 \, \text{ in } \Omega \times (0,T],
\vspace{2mm} \\
c^2 \frac{\partial u}{\partial n}+b((1-\delta)+\delta|\nabla u_t|^{q-1})\frac{\partial u_t}{\partial n}=g  \ \ \text{on} \ \Gamma \times (0,T],
\vspace{2mm} \\
\alpha u_t+c^2 \frac{\partial u}{\partial n}+b((1-\delta)+\delta|\nabla u_t|^{q-1})\frac{\partial u_t}{\partial n}=0  \ \ \text{on} \ \hat{\Gamma}\times (0,T],
\vspace{2mm} \\
(u,u_t)=(u_0, u_1) \ \ \text{on} \ \overline{\Om}\times \{t=0\}
,\\
\end{cases}
\end{split}\\ \nn \\
\begin{split} \label{W2_beta}
\begin{cases}
(1-2ku)u_{tt}-c^2 \text{div} (\nabla u +\varepsilon |\nabla u|^{q-1}\nabla u)
-b \Delta u_t+\beta u_t  \\
=2k(u_t)^2 \, \text{ in } \Omega \times (0,T],
\vspace{2mm} \\
c^2 \frac{\partial u}{\partial n}+c^2 \varepsilon|\nabla u|^{q-1}\frac{\partial u}{\partial n}+b\frac{\partial u_t}{\partial n}=g  \ \ \text{on} \ \Gamma \times (0,T],
\vspace{2mm} \\
\alpha u_t+c^2 \frac{\partial u}{\partial n}+c^2 \varepsilon|\nabla u|^{q-1}\frac{\partial u}{\partial n}+b\frac{\partial u_t}{\partial n}=0  \ \ \text{on} \ \hat{\Gamma} \times (0,T],
\vspace{2mm} \\
(u,u_t)=(u_0, u_1) \ \ \text{on} \ \overline{\Om}\times \{t=0\}
,\\
\end{cases}
\end{split}\\ \nn  \\
\begin{split}\label{W3_gamma}
\begin{cases}
u_{tt}-\frac{c^2}{1-2\tilde{k}u_t}\Delta u
-b\,\text{div}\Bigl(((1-\delta) +\delta|\nabla u_t|^{\qq-1})\nabla u_t\Bigr) +\gamma |u_t|^{q-1}u_t  \\
 =0 \, \text{ in } \Omega \times (0,T],
\vspace{2mm} \\
\frac{c^2}{1-2\tilde{k}u_t} \frac{\partial u}{\partial n}+b((1-\delta)+\delta|\nabla u_t|^{q-1})\frac{\partial u_t}{\partial n}=g   \ \text{on} \ \Gamma \times (0,T],
\vspace{2mm} \\
\alpha u_t+\frac{c^2}{1-2\tilde{k}u_t} \frac{\partial u}{\partial n}+b((1-\delta)+\delta|\nabla u_t|^{q-1})\frac{\partial u_t}{\partial n}=0  \ \text{on} \ \hat{\Gamma}\times (0,T],
\vspace{2mm} \\
(u,u_t)=(u_0, u_1) \ \ \text{on} \ \overline{\Om}\times \{t=0\}.
\end{cases}
\end{split}
\end{align}
\indent Note that in the case of $b=0$, $\alpha=c$ and $\tilde{k}=0$ the absorbing conditions prescribed in \eqref{W1_beta}-\eqref{W3_gamma} would reduce to the standard linear absorbing boundary conditions of the form $u_t+c\frac{\partial u}{\partial n}=0$. \\
\indent In the equations, we assume that the parameters $\beta$ and $\gamma$ are nonnegative; the case $\beta=\gamma=0$ reduces them to \eqref{Westervelt1}-\eqref{Westervelt2}. Another task of the present paper is to investigate possible introduction of these lower order \textit{linear} and \textit{nonlinear} damping terms to the equations \eqref{Westervelt1}-\eqref{Westervelt2}, this becomes beneficial when deriving energy estimates. \\
\indent Additionally, in the context of HIFU devices based on the acoustic lens immersed in a fluid medium, a problem of Westervelt's equation coupled with other equations or with jumping coefficients arises. We will treat acoustic-acoustic coupling which can be modeled by Westervelt's equation in the pressure formulation with spatially varying coefficients (see \cite{BambergerGlowinskiTran} for the linear case and \cite{brunn} for the nonlinear case with homogeneous Dirichlet boundary conditions):
\begin{equation}\label{W1_coupled}
\begin{cases}
\frac{1}{\lambda(x)}(1-2k(x)u) u_{tt}-\text{div}(\frac{1}{\varrho(x)} \nabla u)-\text{div}\Bigl(b(x)( 
((1-\delta(x)) +\delta(x)|\nabla u_t|^{q-1})\nabla u_t\Bigr) 
\vspace{2mm} \\
=\frac{2k(x)}{\lambda(x)}(u_t)^2 \, \text{ in } \Omega \times (0,T],
\vspace{2mm} \\
\frac{1}{\varrho(x)}\frac{\partial u}{\partial n}+b(x)((1-\delta(x))+\delta(x)|\nabla u_t|^{q-1})\frac{\partial u_t}{\partial n}=g  \ \ \text{on} \ \Gamma \times (0,T],
\vspace{2mm} \\
\alpha (x) u_t+\frac{1}{\varrho(x)} \frac{\partial u}{\partial n}+b(x)((1-\delta(x))+\delta(x)|\nabla u_t|^{q-1})\frac{\partial u_t}{\partial n}=0  \ \ \text{on} \ \hat{\Gamma} \times (0,T] ,
\vspace{2mm} \\
(u,u_t)=(u_0, u_1) \ \ \text{on} \ \overline{\Om}\times \{t=0\}. 
\end{cases}
\end{equation}
\subsection{Notations and Preliminaries} \label{notations} 
\noindent 
We assume $\Om \subset \mathbb{R}^d$, $d \in \{1,2,3\}$ to be an open, connected, bounded set  with Lipschitz boundary; $\partial \Om$ is assumed to be a disjoint union of $\Gamma$ and $\hG$. We denote by $n$ the outward unit normal vector. \\
\indent We will study the problems with strong damping $b>0$ and with $c^2>0$, $\delta \in (0,1)$, $\varepsilon>0$ and $k, \tilde{k} \in \mathbb{R}$. Our results will hold for $\alpha$ assumed to be nonegative; the case $\alpha=0$ reduces \eqref{W1_beta}-\eqref{W3_gamma} to problems with only Neuman boundary conditions.\\
\indent Note that, in general, we will assume that $q \geq 1$, but this condition will have to be strenghtened at several instances to assure well-posedness of \eqref{W1_beta}, \eqref{W1_gamma} and existence results for \eqref{W2_beta} and \eqref{W3_gamma}. We will often make use of the continuous embeddings
\begin{align*}
& H^1(\Om) \hookrightarrow L^4(\Om), \ \text{with the norm} \ C^{\Om}_{H^1,L^4},\ \text{and} \\
& W^{1,q+1}(\Om) \hookrightarrow L^{\infty}(\Om), \ \text{with the norm} \ C^{\Om}_{W^{1,q+1},L^{\infty}},
\end{align*}
with the latter being valid for $q+1>d$. In Section \ref{section_W1} and \ref{section_W3} we will need to employ the embedding $L^{q+1}(\Omega) \hookrightarrow L^4(\Omega)$, which holds true for $q \geq 3$. \\
\indent We denote with $C_1^{tr}$ the norm of the trace mapping $$Tr:W^{1,q+1}(\Om) \rightarrow W^{1-\frac{1}{q+1},q+1}(\Gamma),$$ and with $C_2^{tr}$ the norm of the trace mapping $tr: H^1(\Om) \rightarrow H^{-1/2}(\Gamma)$ (with $C_1^{tr}=C_2^{tr}$ for $q=1$). \\
\indent Throughout the paper we assume $t \in [0,T]$, where $T$ is a finite time horizon.
\subsection{Outline of the paper}
The rest of the paper is organized as follows. Subsection \ref{inequalities} contains the derivation of $L^{\infty}$-bounds on $u$ and $u_t$ as well as several useful inequalities that will be employed in the paper. \\
\indent In Section \ref{section_W1}, we start by looking at a linearized version of \eqref{W1_beta} and \eqref{W1_gamma} with $\beta=\gamma=0$, with nonlinearity appearing only through damping, and show local well-posedness. Then we discuss linearized versions of \eqref{W1_beta} and \eqref{W1_gamma} with $\beta, \gamma>0$. By employing the result for the linearized version we proceed to prove local well-posedness for  \eqref{W1_beta} and \eqref{W1_gamma}.\\
\indent Section \ref{section_W1coupling} deals with the short time well-posedness of the acoustic-acoustic coupling modeled by \eqref{W1_coupled}. \\
\indent In Section \ref{section_W2} and \ref{section_W3} we consider \eqref{W2_beta} and \eqref{W3_gamma}, respectively. We again begin by investigating the linearized versions of the problems at hand for $\beta=0$ and $\gamma=0$ respectively, and continue with introducing lower order damping terms and the proof of local existence of solutions.
\subsection{Inequalities} \label{inequalities} \noindent In the case of problems with inhomogeneous Neumann boundary data it is often necessary to employ Poincar\' e's inequality valid for functions in $W^{1,q+1}(\Om)$. We recall such inequality (cf. Theorem 12.23, \cite{Leoni}), namely that there exists a constant $C_P>0$ depending on $q$ and $\Om$ such that
\begin{align} \label{Poincare}
 |\varphi-\frac{1}{|\Om|}\int_{\Om} \varphi \, dx|_{L^{q+1}(\Om)} \leq C_P|\nabla \varphi|_{L^{q+1}(\Om)},
\end{align} 
for all $\varphi\in W^{1,q+1}(\Om)$. \\
\indent The nonlinear damping term appearing in the equations \eqref{W1_beta}-\eqref{W3_gamma} will enable us to avoid degeneracy of the coefficients $1-2ku$ and $1-2ku_t$ by deriving $L^{\infty}$ estimates on $u$ and $u_t$. From \eqref{Poincare} we can obtain
\begin{align} \label{Poincare_est1}
|u(t)|_{W^{1,q+1}(\Om)} \leq
 (1+C_P) |\nabla u(t)|_{L^{q+1}(\Omega)}+C_1^{\Om}\Big|\int_{\Om} u(t) \, dx \Bigr|,
\end{align}
and by replacing $u$ with $u_t$ also
\begin{align} \label{Poincare_est2}
|u_t(t)|_{W^{1,q+1}(\Om)} \leq
 (1+C_P) |\nabla u_t(t)|_{L^{q+1}(\Omega)}+C_2^{\Om}|u_t(t)|_{L^2(\Om)} ,
\end{align}
where $C_1^{\Om}=|\Om|^{-\frac{q}{q+1}}$ and $C_2^{\Om}=|\Om|^{-\frac{q-1}{2(q+1)}}$. \\
From \eqref{Poincare_est2}, by making use of the embedding $W^{1,q+1}(\Omega) \hookrightarrow L^{\infty}(\Om)$, $q>d-1$, we obtain an $L^{\infty}$ estimate on $u_t$
\begin{equation} \label{W3Poincare}
\begin{split}
\|u_t\|_{L^{\infty}(0,T;L^{\infty}(\Om))}
\leq& \, C_{W^{1,\qq+1},L^{\infty}}^\Omega
\Bigl [(1+C_P)\|\nabla u_t\|_{L^{\infty}(0,T;L^{q+1}(\Om))} \\
&\quad \quad \quad \quad \quad \quad +C_2^{\Om} \|u_{t}\|_{L^{\infty}(0,T;L^2(\Om))}  \Bigr],
\end{split}
\end{equation}
which will be used to avoid degeneracy of the factor $1-2ku_t$ in the problem \eqref{W3_gamma}.\\
Employing  \eqref{Poincare_est1} and the estimate
\begin{align} \label{time_ineq}
\|u_t\|^2_{L^2(0,T;L^2(\Om))} \leq T \|u_t\|^2_{L^{\infty}(0,T;L^2(\Om)},
\end{align} we can get an $L^{\infty}$ estimate on $u$
\begin{equation} \label{W2Poincare}
\begin{split}
\|u\|_{L^{\infty}(0,T;L^{\infty}(\Om))}
\leq& \, C_{W^{1,\qq+1},L^{\infty}}^\Omega
\Bigl[(1+C_P)|\nabla u(t)|_{L^{q+1}(\Omega)} \\
&\quad \quad \quad \quad \quad \ +C_1^{\Om}\Big|\int_{\Om} (u_0+\int_0^t u_t(s) \, ds) \, dx\Big| \Bigr]  \\
\leq& \, C_{W^{1,\qq+1},L^{\infty}}^\Omega
\Bigl[(1+C_P)\|\nabla u\|_{L^{\infty}(0,T;L^{q+1}(\Om))}  \\
&\quad \quad \quad \quad \quad \ +C_1^{\Om}|u_0|_{L^{1}(\Om)} +C_2^{\Om}\sqrt{T}\|u_t\|_{L^2(0,T;L^2(\Om))}  \Bigr]  \\
\leq& \, C_{W^{1,\qq+1},L^{\infty}}^\Omega
\Bigl[(1+C_P)\|\nabla u\|_{L^{\infty}(0,T;L^{q+1}(\Om))}  \\
&\quad \quad \quad \quad \quad \ +C_1^{\Om}|u_0|_{L^{1}(\Om)} +C_2^{\Om}T\|u_t\|_{L^{\infty}(0,T;L^2(\Om))}  \Bigr],
\end{split}
\end{equation}
which we will apply when investigating \eqref{W2_beta}.\\
From \eqref{Poincare_est1} we can as well obtain
\begin{equation*}
\begin{split}
|u(t)|_{L^{\infty}(\Om)} \leq& \, C_{W^{1,\qq+1},L^{\infty}}^\Omega
\Bigl[(1+C_P)|\nabla u_0 + \int_0^t \nabla u_t(s) \, ds|_{L^{q+1}(\Om)}  \\  
&\quad \quad \quad \quad \quad \ +C_1^{\Om} |u_0|_{L^{1}(\Om)}+C_2^{\Om}\int_0^t |u_t(t)|_{L^2(\Om)} \, ds \Bigr]  \\
\leq& \, C_{W^{1,\qq+1},L^{\infty}}^\Omega
\Bigl[(1+C_P)(|\nabla u_0|_{L^{q+1}(\Om)}+(t^q\int_0^t |\nabla u_t|^{q+1}_{L^{q+1}(\Om)}\, ds)^{1/{q+1}})  \\  
&\quad \quad \quad \quad \quad \  + C_1^{\Om} |u_0|_{L^{1}(\Om)}+C_2^{\Om}\int_0^t|u_t(t)|_{L^2(\Om)}\, ds \Bigr],
\end{split}
\end{equation*}
which leads to the estimate
\begin{equation} \label{W1Poincare}
\begin{split}
\|u\|_{L^{\infty}(0,T;L^{\infty}(\Om))} \leq& \, C_{W^{1,\qq+1},L^{\infty}}^\Omega
\Bigl[(1+C_P)(|\nabla u_0|_{L^{q+1}(\Om)}  \\
&\quad \quad \quad \quad \quad \ +T^{\frac{q}{q+1}}\|\nabla u_t\|_{L^{q+1}(0,T;L^{q+1}(\Om))}) \\ &\quad \quad \quad \quad \quad \ +C_1^{\Om} |u_0|_{L^{1}(\Om)} +C_2^{\Om}T\|u_t\|_{L^{\infty}(0,T;L^2(\Om))} \Bigr], 
\end{split}
\end{equation}
that will be employed when dealing with the possible degeneracy of the coefficient $1-2ku$ in \eqref{W1_beta} and \eqref{W1_gamma}.
\noindent We will also frequently make use of Young's inequality in the form
\begin{align} \label{Young}
\quad  \quad \quad ab \leq \varepsilon a^s+C(\varepsilon, s) b^{\frac{s}{s-1}}  \quad (a, b >0, \  \varepsilon >0, \ 1<s< \infty),
\end{align}
with $C(\varepsilon, s)=(s-1)s^{\frac{s}{s-1}}\varepsilon ^{-\frac{1}{1-s}}$. \\
\noindent When dealing with the $q$-Laplace damping term in the equations, the inequality (cf. \cite{lindqvist}) 
\begin{align} \label{pLaplace_ineq}
\langle |b|^{q-1}b-|a|^{q-1}a,b-a \rangle \geq 0, \ \ a, b \in \mathbb{R}^d, 
\end{align}
valid for all $q$, will be of use as well. 
 \thispagestyle{plain}
\section{Westervelt's equation in the formulation \eqref{W1_beta} and \eqref{W1_gamma}} \label{section_W1}
\noindent We will begin by looking at the problems \eqref{W1_beta} and \eqref{W1_gamma} with $\beta=\gamma=0$:
\begin{align}\label{W1}
\begin{cases}
(1-2ku)u_{tt}-c^2\Delta u-b\,\text{div}\Bigl(((1-\delta) +\delta|\nabla u_t|^{\qq-1})\nabla u_t\Bigr) \vspace{2mm} \\=2k(u_t)^2 
 \, \text{ in } \Omega \times (0,T],
\vspace{2mm} \\
c^2 \frac{\partial u}{\partial n}+b((1-\delta)+\delta|\nabla u_t|^{q-1})\frac{\partial u_t}{\partial n}=g  \ \ \text{on} \ \Gamma \times (0,T],
\vspace{2mm} \\
\alpha u_t+c^2 \frac{\partial u}{\partial n}+b((1-\delta)+\delta|\nabla u_t|^{q-1})\frac{\partial u_t}{\partial n}=0  \ \ \text{on} \ \hat{\Gamma}\times (0,T],
\vspace{2mm} \\
(u,u_t)=(u_0, u_1) \ \ \text{on} \ \overline{\Om}\times \{t=0\},
\end{cases}
\end{align}
\noindent Following the approach in \cite{brunn}, we will first consider the equation where nonlinearity appears only in the damping term
\begin{equation}\label{W1lin}
\begin{cases}
a u_{tt}-c^2\Delta u-b\,\text{div}\Bigl( 
((1-\delta) +\delta|\nabla u_t|^{q-1})\nabla u_t\Bigr)+fu_t \vspace{2mm} \\ =0 \, \text{ in } \Omega \times (0,T]
\vspace{2mm} \\
c^2 \frac{\partial u}{\partial n}+b((1-\delta)+\delta|\nabla u_t|^{q-1})\frac{\partial u_t}{\partial n}=g  \ \ \text{on} \ \Gamma \times (0,T],
\vspace{2mm} \\
\alpha u_t+c^2 \frac{\partial u}{\partial n}+b((1-\delta)+\delta|\nabla u_t|^{q-1})\frac{\partial u_t}{\partial n}=0  \ \ \text{on} \ \hat{\Gamma} \times (0,T],
\vspace{2mm} \\
(u,u_t)=(u_0, u_1) \ \ \text{on} \ \overline{\Om}\times \{t=0\},
\\
\end{cases}
\end{equation}
and prove local well-posedness.
\begin{proposition} \label{prop:W1lin}
Let $T>0$, $c^2$, $b>0$, $\alpha \geq 0$, $\delta \in (0,1)$, $q \geq 1$ and assume that
\begin{enumerate}
\item[(i)]
\begin{itemize} 
\item 
$a\in L^{\infty}(0,T;L^{\infty}(\Omega)) 
$, $a_t\in L^{\infty}(0,T;L^2(\Omega))$,
$0<\underline{a}\leq a(t,x)\leq \overline{a}$,
\item
$f\in L^{\infty}(0,T;L^2(\Omega))$, 
\item
 $g \in L^{\frac{q+1}{q}}(0,T;W^{-\frac{q}{q+1},\frac{q+1}{q}}(\Gamma))$,
\item
$ u_0\in H^1(\Omega)$, $u_1\in L^2(\Omega)$,
\end{itemize}
\end{enumerate}
with 
\begin{align}
\|f-\frac12 a_t\|_{L^{\infty}(0,T;L^2(\Omega))}\leq\hat{b}
< \min \Bigl \{\frac{b(1-\delta)}{2(C_{H^1,L^4}^\Omega)^2}, \frac{\underline{a}}{4T(C_{H^1,L^4}^\Omega)^2} \Bigr \}\,. \label{W1_f_a}
\end{align}
\noindent Then \eqref{W1lin} has a weak solution 
\begin{equation} \label{W1_Xtilde}
\begin{split}
u\in \tilde{X}:=\{v:& \, v \in C(0,T;H^1(\Om)) \cap C^1(0,T;L^2(\Om))  \\
& \wedge  v_t \in  L^{q+1}(0,T;W^{1,q+1}(\Om))\},
\end{split}
\end{equation}
which is unique and satisfies the energy estimate
\begin{align} \label{est1}
&\Bigl[\frac{\underline{a}}{4}-\hat{b}(\Chl)^2T-\epsilon_0 \Bigr]\|u_t\|^2_{\LiT}
+ \frac{c^2}{4} \|\nabla u\|_{\LiT}^2 \nn  \\
&+ \Bigl[\frac{b(1-\delta)}{2}-\hat{b}(\Chl)^2 \Bigr]\|\nabla u_t\|^2_{\LT}+\frac{\alpha}{2}\| u_{t}\|^2_{L^2(0,T;L^2(\hG))} \nn \\
&+\Bigl[\frac{b\delta}{2}-\epsilon_1 \Bigr]\|\nabla u_t\|^{q+1}_{\LqT}  \\
\leq& \, \frac{\overline{a}}{2}|u_1|^2_{L^2(\Om)}+\frac{c^2}{2}|\nabla u_0|^2_{L^2(\Om)}+\frac{1}{4\epsilon_0}(C_1^{tr} C^{\Om}_2)^2\|g\|^2_{L^{1}(0,T;W^{-\frac{q}{q+1},\frac{q+1}{q}}(\Gamma))} \nn \\ &+C(\epsilon_1,q+1)(C_1^{tr}(1+C_P))^{\frac{q+1}{q}}\|g\|^{\frac{q+1}{q}}_{L^{\frac{q+1}{q}}(0,T;W^{-\frac{q}{q+1},\frac{q+1}{q}}(\Gamma))}, \nn
\end{align}
for some constants 
\begin{equation} \label{W1_epsilon}
0< \epsilon_0 < \frac{\underline{a}}{4}-\hat{b}(C^{\Om}_{H^1,L^4})^2T, \ 0< \epsilon_1<\frac{b \delta}{2}.
\end{equation}
\noindent If, in addition to (i), 
\begin{enumerate}
\item[(ii)]
\begin{itemize}
\item 
$f \in L^{\infty}(0,T;H^1(\Om)), \ \|f\|_{L^{\infty}(0,T;H^1(\Om))} \leq \tilde{b}$,
\item
 $g \in L^{\infty}(0,T;W^{-\frac{q}{q+1},\frac{q+1}{q}}(\Gamma))$,  $g_t \in L^{\frac{q+1}{q}}(0,T;W^{-\frac{q}{q+1},\frac{q+1}{q}}(\Gamma))$,
\item
$ u_1\in W^{1,q+1}(\Omega)$,
\end{itemize} 
\end{enumerate} then 
\begin{align} \label{W1_X}
u\in X:=~& C^1(0,T;W^{1,q+1}(\Om)) \cap H^2(0,T;L^2(\Om)),
\end{align}
and satisfies the energy estimate
\begin{equation}  \label{est2}
\begin{split}
&\mu \frac{\underline{a}-\tau}{2} \|u_{tt}\|^2_{L^2(0,T;L^2(\Omega))}+\mu [\frac{b(1-\delta)}{4}-\sigma]\|\nabla u_{t}\|^2_{L^{\infty}(0,T;L^2(\Om))}  \\
& +\Bigl[\frac{\underline{a}}{4}-(C^{\Om}_{H^1,L^4})^2\hat{b}T- \epsilon_0(\mu+1) -\mu \frac{1}{2\tau}(C^{\Om}_{H^1,L^4})^4\tilde{b}^2T \Bigr]\|u_{t}\|^2_{L^{\infty}(0,T;L^2(\Om))}  \\
& + \Bigl[\frac{b (1-\delta)}{2}-(C^{\Omega}_{H^1,L^4})^2\hat{b}-\mu (\frac{1}{2\tau}(C^{\Omega}_{H^1,L^4})^4\tilde{b}^2+ c^2) \Bigr]\|\nabla u_{t}\|^2_{L^2(0,T;L^2(\Omega))}   \\
& +\frac{c^2}{4}(1-\mu \frac{c^2}{\sigma}) \|\nabla u\|^2_{L^{\infty}(0,T;L^2(\Om))} +\mu [\frac{b\delta}{2(q+1)}-\eta]\|\nabla u_{t}\|^{q+1}_{L^{\infty}(0,T;L^{q+1}(\Om))}
 \\
&+[\frac{b\delta}{2}-\epsilon_1(\mu+1)] \|\nabla u_{t}\|^{q+1}_{L^{q+1}(0,T;L^{q+1}(\Om))} +\frac{\alpha}{2}\|u_{t}\|^2_{L^2(0,T;L^2(\hG))} \\
&   +\mu \frac{\alpha}{4}\|u_{t}\|^2_{L^{\infty}(0,T;L^2(\hG))} \\
\leq& \,  \,\overline{C}\Bigl( C_{\Gamma}(g)+|u_{1}|^2_{H^1(\Om)}+|\nabla u_{0}|^2_{L^2(\Omega)}+|u_{1}|^{q+1}_{W^{1,q+1}(\Omega)}+ |u_{1}|^2_{L^2(\hG)} \Bigr),
\end{split}
\end{equation}
for some sufficiently small constants $\mu, \sigma, \tau, \eta>0$, some large enough $\overline{C}>0$, and
\begin{equation} \label{Cg}
\begin{split}
C_{\Gamma}(g)=& \displaystyle \sum_{s=0}^1  \|\frac{d^s}{d t^s} g\|^2_{L^{1}(0,T;W^{-\frac{q}{q+1},\frac{q+1}{q}}(\Gamma))}+\displaystyle \sum_{s=0}^1 \|\frac{d^s}{d t^s} g\|^{\frac{q+1}{q}}_{L^{\frac{q+1}{q}}(0,T;W^{-\frac{q}{q+1},\frac{q+1}{q}}(\Gamma))}  \\
& +\|g\|^{2}_{L^{\infty}(0,T;W^{-\frac{q}{q+1},\frac{q+1}{q}}(\Gamma))}+\|g\|^{\frac{q+1}{q}}_{L^{\infty}(0,T;W^{-\frac{q}{q+1},\frac{q+1}{q}}(\Gamma))}.
\end{split}
\end{equation}
\end{proposition}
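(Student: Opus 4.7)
The plan is a Faedo--Galerkin scheme combined with monotonicity of the $q$-Laplace damping. Choose a countable basis $\{\varphi_j\}$ of $L^2(\Omega)$ that is $L^2$-orthonormal and dense in $W^{1,q+1}(\Omega)$, and seek $u_n(t)=\sum_{j=1}^n c_j^n(t)\varphi_j\in V_n$ satisfying the Galerkin projection of the variational form of \eqref{W1lin} tested against each $\varphi_j$, with initial data the $V_n$-projections of $(u_0,u_1)$. Since the mass matrix $[\int_\Omega a(t,\cdot)\varphi_i\varphi_j]_{ij}$ is uniformly positive definite by $a\geq\underline{a}>0$, Carath\'eodory theory yields a local solution, extended to $[0,T]$ by the energy bound below.

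\textbf{First estimate.} Testing with $u_{n,t}$ and using
\[
\int_\Omega a u_{n,tt}u_{n,t}\,dx = \tfrac12\tfrac{d}{dt}\!\int_\Omega au_{n,t}^2\,dx - \tfrac12\!\int_\Omega a_t u_{n,t}^2\,dx,
\]
the $a_t$-term fuses with $\int fu_{n,t}^2$ into $\int(f-\tfrac12 a_t)u_{n,t}^2$, which by \eqref{W1_f_a}, H\"older, and $H^1\hookrightarrow L^4$ is bounded by $\hat b(\Chl)^2|u_{n,t}|_{H^1}^2$. The Neumann forcing is split via the trace inequality and \eqref{Poincare_est2}:
\[
\int_\Gamma g\,u_{n,t}\,dS \leq C_1^{tr}\|g\|_{W^{-q/(q+1),(q+1)/q}(\Gamma)}\bigl[(1+C_P)|\nabla u_{n,t}|_{L^{q+1}} + C_2^\Omega|u_{n,t}|_{L^2}\bigr].
\]
Integrating in time and applying Young's inequality \eqref{Young} with exponents $(2,2)$ and $(q+1,(q+1)/q)$ against these two pieces (with slack $\epsilon_0,\epsilon_1$), combined with $\|u_{n,t}\|^2_{L^2L^2}\leq T\|u_{n,t}\|^2_{L^\infty L^2}$ and the smallness of $\hat b$ from \eqref{W1_f_a}, closes the absorption and produces \eqref{est1} for $u_n$.

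\textbf{Passage to the limit and uniqueness.} The estimate is uniform in $n$, giving weak-$*$ compactness in the spaces defining $\tilde X$ together with a uniform bound on $u_{n,tt}$ in $L^{(q+1)/q}(0,T;(W^{1,q+1})^*)$ obtained from the equation. Aubin--Lions produces $u_n\to u$ strongly in $C([0,T];L^2(\Omega))$ and $u_{n,t}\to u_t$ strongly in $L^2(0,T;L^2(\Omega))$ and on $\hat\Gamma$, which transfers every linear term. The nonlinear damping converges weakly to some $\chi\in L^{(q+1)/q}$; the Minty--Browder trick driven by \eqref{pLaplace_ineq}, using lower-semicontinuity of the dissipation to compare $\limsup$ of Galerkin energies with the limit energy, identifies $\chi=|\nabla u_t|^{q-1}\nabla u_t$. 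Uniqueness in $\tilde X$ follows from testing the difference equation with $u_t^{(1)}-u_t^{(2)}$, discarding the nonnegative $q$-Laplace difference by \eqref{pLaplace_ineq}, and reusing the absorption of the first step.

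\textbf{Higher regularity.} For part (ii) I additionally test the Galerkin equation with $u_{n,tt}$. The boundary contribution is handled by integration by parts in time,
\[
\int_0^t\!\!\int_\Gamma g\,u_{n,tt}\,dS\,ds = \int_\Gamma g\,u_{n,t}\,dS\Big|_0^t - \int_0^t\!\!\int_\Gamma g_t\,u_{n,t}\,dS\,ds,
\]
so that only $g\in L^\infty W^{-q/(q+1),(q+1)/q}$ and $g_t\in L^{(q+1)/q}W^{-q/(q+1),(q+1)/q}$ from (ii) are needed; the pointwise-in-$t$ piece is controlled by the same trace--Poincar\'e split as before, now consuming a fraction $\eta$ of $\|\nabla u_{n,t}\|^{q+1}_{L^\infty L^{q+1}}$ and $\epsilon_0$ of $\|u_{n,t}\|^2_{L^\infty L^2}$, producing the $L^\infty$-norms of $g$ in $C_\Gamma(g)$. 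The interior mixed term is estimated by $|\int fu_{n,t}u_{n,tt}|\leq(\Chl)^2\tilde b|u_{n,t}|_{H^1}|u_{n,tt}|_{L^2}$ and Young with parameter $\tau$, yielding the $\tfrac1{2\tau}(\Chl)^4\tilde b^2$ coefficients. After an integration by parts in time the cross term $c^2\int_0^t(\nabla u,\nabla u_{tt})$ yields $c^2(\nabla u(t),\nabla u_t(t)) - c^2\int_0^t|\nabla u_t|^2$ plus an initial datum; the sign-indefinite pointwise piece is bounded by $\tfrac{c^4}{2\sigma}|\nabla u|^2+\tfrac{\sigma}{2}|\nabla u_t|^2$, and the $|\nabla u|^2$ part is only absorbable if one forms the linear combination $\mu\cdot(\text{second estimate})+(\text{first estimate})$ with $\mu>0$ small so that $1-\mu c^2/\sigma>0$---this is the origin of the factor $(1-\mu c^2/\sigma)$ in \eqref{est2}. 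The main obstacle is therefore the bookkeeping: choosing $\mu,\sigma,\tau,\eta>0$ small enough, in the right order, together with $\epsilon_0,\epsilon_1$ as in \eqref{W1_epsilon}, so that every bracket on the LHS of \eqref{est2} is strictly positive. Once this is done, weak lower semicontinuity transfers \eqref{est2} from $u_n$ to the limit.
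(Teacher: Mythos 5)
Your proposal is correct and follows the same backbone as the paper's proof: Galerkin approximation, the lower-order estimate obtained by testing with $u_{n,t}$ (fusing $f$ and $\tfrac12 a_t$, splitting the boundary term via the trace map and \eqref{Poincare_est2}, and absorbing with $\hat b$ small as in \eqref{W1_f_a}), the higher-order estimate obtained by testing with $u_{n,tt}$ and integrating the boundary and cross terms by parts in time, and the final linear combination of $\mu$ times the higher estimate with the lower one to absorb $\|\nabla u\|^2_{L^\infty(0,T;L^2(\Om))}$ --- your identification of the factor $1-\mu c^2/\sigma$ and of the constraint ordering on $\mu,\sigma,\tau,\eta,\epsilon_0,\epsilon_1$ matches \eqref{W1constants}. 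The uniqueness argument via \eqref{pLaplace_ineq} is also identical. Where you genuinely diverge is in the existence machinery. The paper first replaces $a,f,g$ by smooth-in-time approximants $a_k,f_k,g_k$, works with a basis orthonormal in the weighted space $L^2_{\tilde a_k}(\Om)$, and then performs a double limit $n\to\infty$, $k\to\infty$; you instead keep the rough coefficients and invoke Carath\'eodory theory for the ODE system, using only uniform positivity of the mass matrix. This shortens the argument by one limiting layer, at the price of having to justify the a priori bound on $u_{n,tt}$ in $L^{\frac{q+1}{q}}(0,T;(W^{1,q+1}(\Om))^*)$ from an equation that holds only against $V_n$ and carries the variable weight $a$ in front of $u_{tt}$ --- a standard but nontrivial projection argument you should not leave implicit. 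The second difference is to your credit: the paper simply asserts the weak convergence \eqref{weakconv3} of $|\nabla u_{n,t}|^{q-1}\nabla u_{n,t}$ to $|\nabla u_t|^{q-1}\nabla u_t$, whereas you supply the Minty--Browder identification using monotonicity \eqref{pLaplace_ineq} and weak lower semicontinuity of the energy at the endpoint; this is exactly the step that makes the limit passage for the nonlinear damping rigorous, and it is the standard way to close the gap the paper delegates to \cite{brunn}.
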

\begin{proof}
The weak form of \eqref{W1lin} is given as
\begin{align}\label{W1linweak}
&\int_\Omega \Bigl\{a u_{tt} w + c^2 \nabla u \cdot \nabla w 
+ b\Bigl((1-\delta) +\delta |\nabla u_t|^{q-1}
\Bigr)\nabla u_t \cdot \nabla w \Bigr\} \, dx 
 +\alpha \int_{\hat{\Gamma}} u_t w \, dx \nn \\
=& \ -\int_0^t \int_\Omega fu_t w\, dx+\int_{\Gamma}g w \, dx, \ \forall w\in W^{1,q+1}(\Omega), 
\end{align}
with initial conditions $(u_{0},u_{1})$.\\
\indent We will use the standard Galerkin method (see for instance Section 7.2, \cite{evans} for the case of second-order linear hyperbolic equations and Section 2, \cite{brunn} for the problem \eqref{W1lin} with homogeneous Dirichlet boundary data), where we will first construct approximations of the solution, and then by obtaining energy estimates guarantee weak convergence of these approximations. \\
\noindent \textbf{1. Smooth approximation of $a$, $f$, and $g$.}  Let us first introduce sequences $(a_k)_{k\in\N}$, $(f_k)_{k\in\N}$ and $(g_k)_{k\in\N}$ which represent smooth in time approximations of $a$, $f$, and $g$:
\begin{itemize}
\item 
$(a_k)_{k\in\N}\subseteq C^\infty([0,T]\times\overline{\Omega})\cap W^{1,\infty}(0,T;L^2(\Om))$,\\
$a_k\to a$ in $L^{\infty}(0,T;L^{\infty}(\Omega))$, $a_{k,t} \to a_t$ in $L^{\infty}(0,T;L^2(\Om))$, \\
$0<\underline{a}\leq a_k(t,x)\leq \overline{a}$,
\item 
$(f_k)_{k\in\N}\subseteq C^\infty((0,T)\times\Omega)$, 
$f_k\to f$ in $L^{\infty}(0,T;L^2(\Omega))$,
\item 
$(g_k)_{k\in\N} \subseteq C^\infty(0,T;W^{-\frac{q}{q+1},\frac{q+1}{q}}(\Gamma))$, 
$g_k\to g$ in 
$L^{\frac{q+1}{q}}(0,T;W^{-\frac{q}{q+1},\frac{q+1}{q}}(\Gamma))$,
\item
$\|f_k-\frac12a_{k,t}\|_{L^{\infty}(0,T;L^2(\Omega))}\leq\hat{b}$,
\end{itemize}
and, for fixed $k\in\N$, prove that there exists a solution $u^{(k)}$ of
\begin{equation}\label{W1linweak_k}
\begin{split}
\int_\Omega \Bigl\{& \, a_ku^{(k)}_{tt} w + c^2 \nabla u^{(k)} \cdot \nabla w 
+ b\Bigl((1-\delta) +\delta |\nabla u^{(k)}_t|^{q-1}
\Bigr)\nabla u^{(k)}_t \cdot \nabla w \Bigr\}  \, dx  \\
& +\alpha \int_{\hG}u^{(k)}_t w \, dx 
= -\int_\Omega f_ku^{(k)}_t w\, dx+\int_{\Gamma} g w \, dx, \ \quad \forall w\in W^{1,q+1}(\Omega),
\end{split}
\end{equation}
with initial conditions $(u_{0},u_{1})$. \\
\noindent \textbf{(a) Galerkin approximations.} We start by proving existence and uniqueness of a solution for a finite-dimensional approximation of \eqref{W1linweak_k}. We choose smooth functions $w_m=w_m(x)$, $m\in\N$ such that
\begin{align*}
&\{w_m\}_{m\in\N}\text{ is an orthonormal basis of }L^2_{\tilde{a}_k}(\Omega),\\ 
&\{w_m\}_{m\in\N}\text{ is a basis of } W^{1,q+1}(\Omega), \\
&\{w_{m|_{\hG}}\}_{m\in\N}\text{ is an orthonormal basis of } L^2(\hG), 
\end{align*}
where $L^2_{\tilde{a}_k}$ is the weighted $L^2$-space based on the inner product $\langle f,g\rangle_{L^2_{\tilde{a}_k}(\Omega)} := \int_\Omega \tilde{a}_k fg \, dx$,
with $\tilde{a}_k=\frac{1}{T}\int_0^Ta_k(t)\, dt$. \\
Next, we construct a sequence of finite dimensional subspaces $V_n$ of $L^2_{\tilde{a}_k} (\Omega) \cap W^{1,q+1}(\Omega)$, 
\begin{align*}
V_n=\text{span}\{w_1,w_2,\ldots,w_n\}.
\end{align*}  
Clearly, $V_n \subseteq V_{n+1}$, 
$V_n\subseteq L^2_{\tilde{a}_k} (\Omega) \cap W^{1,q+1}(\Omega)$ and $\overline{\bigcup_{n\in\N} V_n}=W^{1,q+1}(\Omega)$.\\
Let $(u_{0,n})_{n\in\N}$, $(u_{1,n})_{n\in\N}$ be sequences such that 
\begin{itemize}
\item 
$u_{0,n}\in  V_n$, 
$ u_{0,n}\to  u_0$ in $H^1(\Omega)$,
\item $u_{1,n}\in  V_n$, 
$u_{1,n}\to u_1$ in $L^2(\Omega)$.
\end{itemize}
We can now consider a sequence of discretized versions of \eqref{W1linweak_k},
\begin{align}\label{W1linweakdis}
\int_\Omega \Bigl\{& a_k u_{n,tt}^{(k)} w_n + c^2 \nabla u_n^{(k)} \cdot \nabla w_n 
+ b\Bigl((1-\delta) +\delta|\nabla u_{n,t}^{(k)}|^{q-1}\Bigr)\nabla u_{n,t}^{(k)} \cdot \nabla w_n \Bigr\} \, dx  \nn \\
& +\alpha\int_{\hG}u_{n,t}^{(k)}  w_n \, dx 
=-\int_\Omega f_k u_{n,t}^{(k)} w_n \, dx+\int_{\Gamma} g_k w_n \, dx, \quad \forall w_n\in V_n,
\end{align}
with $u_n^{(k)}(t)\in V_n$ and initial conditions $(u_{0,n},u_{1,n})$. For each $n\in\N$, we face an initial value problem for a second order system of ordinary differential equations with coefficients and right hand side that are $C^{\infty}$ functions of t. According to standard existence theory for ordinary differential equations (cf. \cite{Teschl}), there exists a unique solution $u_n^{(k)}\in C^\infty(0,\tilde{T},V_n)$ of \eqref{W1linweakdis} for some $\tilde{T}\leq T$ sufficiently small. By employing the uniform energy estimates obtained below, we can conclude that $\tilde{T}=T$. \\
\noindent \textbf{(b) Lower energy estimate.} Testing \eqref{W1linweakdis} with $w_n=u_{n,t}^{(k)}(t) \in V_n$ and integrating with respect to time results in
\begin{equation} \label{W1lin_1est}
\begin{split}
&\frac{1}{2}\left[\int_\Omega a_k\left(u_{n,t}^{(k)}\right)^2\, dx
+ c^2 |\nabla u_n^{(k)}|_{L^2(\Omega)}^2 \right]_0^t+\alpha \int_0^t \int_{\hG} | u_{n,t}^{(k)}|^2 \, dx \, ds  \\
&\quad \ + b \int_0^t \int_{\Omega} \Bigl((1-\delta) +\delta|\nabla u_{n,t}^{(k)}|^{q-1}\Bigr)|\nabla u_{n,t}^{(k)}|^2 \, dx \, ds  \\
=& \ 
- \int_0^t  \int_\Omega (f_k- \frac{1}{2}a_{k,t}) \left(u_{n,t}^{(k)}\right)^2 \, dx \, ds
+ \int_0^t \int_{\Gamma} g_k u_{n,t}^{(k)} \, dx \, ds  \\
\leq& \, \|f_k- \frac{1}{2}a_{k,t}\|_{L^{\infty}(0,T;L^2(\Om))}\int_0^t |u^{(k)}_{n,t}|^2_{L^4(\Om)} \, ds + \int_0^t \int_{\Gamma} g_k u_{n,t}^{(k)} \, dx \, ds .
\end{split}
\end{equation}
For estimating the boundary integral appearing on the right side, we will make use of \eqref{Poincare_est2} to obtain
 \begin{align} \label{W1_inequality1}
 \int_0^t \int_{\Gamma} g_k u_{n,t}^{(k)} \, dx \, ds \leq& \, \int_0^t |u_{n,t}^{(k)}(s)|_{W^{1-\frac{1}{q+1},q+1}(\Gamma)} |g_k(s)|_{W^{-\frac{q}{q+1},\frac{q+1}{q}}(\Gamma)} \, ds \nn \\
\leq &\, C_1^{tr} \int_0^t |u_{n,t}^{(k)}(s)|_{W^{1,q+1}(\Om)} |g_k(s)|_{W^{-\frac{q}{q+1},\frac{q+1}{q}}(\Gamma)} \, ds  \nn \\
\leq& \, C_1^{tr} \int_0^t \Bigl[ (1+C_P)|\nabla u_{n,t}^{(k)}(s)|_{L^{q+1}(\Om)} \\
&\quad \quad \quad \quad +C^{\Om}_2 |u_{n,t}^{(k)}(s)|_{L^2(\Om)}\Bigr]|g_k(s)|_{W^{-\frac{q}{q+1},\frac{q+1}{q}}(\Gamma)} \, ds  \nn \\
\leq& \, \epsilon_1 \|\nabla u_{n,t}^{(k)}\|^{q+1}_{L^{q+1}(0,T;L^{q+1}(\Om))} +\epsilon_0 \|u_{n,t}^{(k)}\|^2_{L^{\infty}(0,T;L^2(\Om))}  \nn \\
& +C(\epsilon_1,q+1)(C_1^{tr}(1+C_P))^{\frac{q+1}{q}}\|g_k\|^{\frac{q+1}{q}}_{L^{\frac{q+1}{q}}(0,T;W^{-\frac{q}{q+1},\frac{q+1}{q}}(\Gamma))}  \nn \\
&+\frac{1}{4\epsilon_0}(C_1^{tr} C^{\Om}_2)^2\|g_k\|^2_{L^{1}(0,T;W^{-\frac{q}{q+1},\frac{q+1}{q}}(\Gamma))}, \nn
\end{align}
with $\epsilon_0, \epsilon_1>0$. By taking the essential supremum with respect to $t$ in \eqref{W1lin_1est} and employing the embedding $H^1(\Om) \hookrightarrow L^4(\Om)$, as well as the inequality \eqref{time_ineq}, we obtain the estimate
\begin{align} \label{lower_estimate}
&\Bigl[\frac{\underline{a}}{4}-\hat{b}(\Chl)^2T-\epsilon_0 \Bigr]\|u_{n,t}^{(k)}\|^2_{\LiT}
+ \frac{c^2}{4} \|\nabla u_{n}^{(k)}\|_{\LiT}^2 \nn \\
&+ \Bigl[\frac{b(1-\delta)}{2}-\hat{b}(\Chl)^2 \Bigr]\|\nabla u_{n,t}^{(k)}\|^2_{\LT} +\frac{\alpha}{2}\| u_{n,t}^{(k)}\|^2_{L^2(0,T;L^2(\hG))} \nn  \\
&+\Bigl[\frac{b\delta}{2}-\epsilon_1\Bigr]\|\nabla u_{n,t}^{(k)}\|^{q+1}_{\LqT}  \\
\leq& \ C(\epsilon_1,q+1)(C_1^{tr}(1+C_P))^{\frac{q+1}{q}}\|g_k\|^{\frac{q+1}{q}}_{L^{\frac{q+1}{q}}(0,T;W^{-\frac{q}{q+1},\frac{q+1}{q}}(\Gamma))}+\frac{\overline{a}}{2}|u_{1,n}^{(k)}|^2_{L^2(\Om)} \nn \\
& +\frac{1}{4\epsilon_0}(C_1^{tr} C^{\Om}_2)^2\|g_k\|^2_{L^{1}(0,T;W^{-\frac{q}{q+1},\frac{q+1}{q}}(\Gamma))}+\frac{c^2}{2}|\nabla u_{0,n}^{(k)}|^2_{L^2(\Om)}.\nn
\end{align}
We choose $\epsilon_0,  \epsilon_1$ small enough
\begin{align} \label{epsilon_choice}
& 0< \epsilon_0 < \frac{\underline{a}}{4}-\hat{b}(C^{\Om}_{H^1,L^4})^2T, \ 0< \epsilon_1<\frac{b \delta}{2},
\end{align}
so that coefficients appearing in the estimate remain positive. As by assumption 
$g_k \in L^{\frac{q+1}{q}}(0,T;W^{-\frac{q}{q+1},\frac{q+1}{q}}(\Gamma))$, 
we conclude that the sequence of Galerkin approximations $\big(u_n^{(k)}\big)_{n\in\N}$ is bounded in the Banach space
\begin{align*}
\tilde{X}:=~&\{v: v \in C(0,T;H^1(\Om)) \cap C^1(0,T;L^2(\Om)) \wedge  v_t \in  L^{q+1}(0,T;W^{1,q+1}(\Om))\}.
\end{align*}
It follows from \eqref{lower_estimate} that 
\begin{align} 
& \big( u_{n,t}^{(k)}\big)_{n \in\N}\text{ is uniformly bounded in } L^2(0,T; L^2(\Omega)), \label{bound1} \\
& \big(\nabla u_{n,t}^{(k)}\big)_{n \in\N}\text{ is uniformly bounded in } L^{q+1}(0,T; L^{q+1}(\Omega)),  \label{bound2}  \\
& |\nabla u_{n,t}^{(k)}|^{q-1} \nabla u_{n,t}^{(k)} \ \text{ is uniformly bounded in } L^{\frac{q+1}{q}}(0,T;L^{\frac{q+1}{q}}(\Omega)), \ \text{and} \label{bound3}  \\
& \big( u_{n,t|\hG}^{(k)}\big)_{\in\N}\text{ is uniformly bounded in } L^2(0,T; L^2(\hG)), \label{bound4}
\end{align}
which are all reflexive Banach spaces. \\
\noindent \textbf{(c) Convergence of Galerkin approximations.} Due to \eqref{bound1}-\eqref{bound4} there exists a weakly convergent subsequence of $(u_n^{(k)})_{n\in\N}$, which we still denote $(u_n^{(k)})_{n\in\N}$, and a $u^{(k)}$ such that 
\begin{align}
& u_{n,t}^{(k)} \rightharpoonup  u^{(k)}_t \mbox{ in }L^2(0,T; L^2(\Omega)),  \label{weakconv1} \\
&\nabla u_{n,t}^{(k)} \rightharpoonup \nabla u^{(k)}_t \mbox{ in }L^{q+1}(0,T; L^{q+1}(\Omega)), \label{weakconv2} \\
& |\nabla u_{n,t}^{(k)}|^{q-1} \nabla u_{n,t}^{(k)}
\rightharpoonup |\nabla u_t^{(k)}|^{q-1} \nabla u_t^{(k)} 
\mbox{ in }L^{\frac{q+1}{q}} (0,T;L^{\frac{q+1}{q}}(\Omega)), \label{weakconv3} \\
& u^{(k)}_{n,t| \hG}\rightharpoonup u^{(k)}_{t| \hG} \mbox{ in } L^2(0,T; L^2(\hG)). \label{weakconv4}
\end{align}
\noindent Our task next is to prove that the weak limit $u^{(k)}$ solves \eqref{W1linweak_k}. Fix $k,m\in\N$ and let $\phi_m\in C^\infty(0,T,V_m)\subset L^{q+1}(0,T;W^{1,q+1}(\Omega))$ with $\phi_m(T)=0$. For any $n\geq m$, by $V_m\subseteq V_n$ we have 
\begin{align} \label{convntoinfty_W1}
&\int_0^T \int_\Omega \Bigl\{ a_ku_{tt}^{(k)} \phi_m 
+ c^2 \nabla u^{(k)}\cdot  \nabla \phi_m
+ b\Bigl((1-\delta) +\delta|\nabla u_t^{(k)}|^{q-1}\Bigr)\nabla u_t^{(k)}\cdot \nabla \phi_m  \nn \\ 
&\quad + f_k u_t^{(k)} \phi_m \Bigr\} \, dx\, ds+\alpha \int_0^T \int_{\hG}u_t^{(k)} \phi_m \, dx \, ds- \int_0^T \int_{\Gamma} g_k \phi_m \, dx \, ds \nn \\
=& \,
-\int_0^T\int_\Omega [u_{t}^{(k)}-u_{n,t}^{(k)}] \Bigl(a_k  \phi_m\Bigr)_t \, dx \, ds
-\int_\Omega [u_1-u_{1,n}] a_k(0)  \phi_m(0) \, dx \, ds  \nn \\
&+ c^2 \int_0^T\int_\Omega [\nabla u^{(k)}-\nabla u_n^{(k)}]\cdot \nabla \phi_m \, dx \, ds  \\ 
&
+ \int_0^T\int_\Omega [u_t^{(k)} -u_{n,t}^{(k)}]f_k \phi_m \, dx\, ds
+ b (1-\delta) \int_0^T\int_\Omega  [\nabla u_t^{(k)}-\nabla u_{n,t}^{(k)}]\cdot \nabla \phi_m \, dx\, ds  \nn \\ 
&+ b\delta \int_0^T\int_\Omega [|\nabla u_t^{(k)}|^{q-1}\nabla u_t^{(k)}-|\nabla u_{n,t}^{(k)}|^{q-1}\nabla u_{n,t}^{(k)}]\cdot \nabla \phi_m \, dx\, ds \nn \\
& +\alpha \int_0^T \int_{\hG}[u_t^{(k)} -u_{n,t}^{(k)}] \phi_m \, dx \, ds \  \to 0 \mbox{ as }n\to \infty, \nn
\end{align}
due to \eqref{weakconv1}-\eqref{weakconv4}.
Since $\bigcup_{m\in\N} V_m$ is dense in $W^{1,q+1}(\Omega)$, $u^{(k)}$ indeed solves \eqref{W1linweak_k}. By testing the problem \eqref{W1linweak_k} with $u^{(k)}_t$ and proceding as in 1.(b) we can conclude that this weak limit satisfies the estimate \eqref{lower_estimate} with $u_n^{(k)}$ replaced by $u^{(k)}$.  \\ 
\noindent \textbf{2. $ \pmb{k} \mathbf{\to} \pmb{\infty} $.} Owing to the previous conclusion, we can find a weakly convergent subsequence of $(u^{(k)})$, which
we again denote $(u^{(k)})$, and $u\in \tilde{X}$ such that
\begin{align}
& u_{t}^{(k)} \rightharpoonup  u_t \mbox{ in }L^2(0,T; L^2(\Omega)),  \label{weakconv5} \\
&\nabla u_{t}^{(k)} \rightharpoonup \nabla u_t \mbox{ in }L^{q+1}(0,T; L^{q+1}(\Omega)), \label{weakconv6} \\
& |\nabla u_{t}^{(k)}|^{q-1} \nabla u_{n,t}^{(k)}
\rightharpoonup |\nabla u_t|^{q-1} \nabla u_t 
\mbox{ in }L^{\frac{q+1}{q}} (0,T;L^{\frac{q+1}{q}}(\Omega)), \label{weakconv7} \\
& u^{(k)}_{t| \hG}\rightharpoonup u_{t| \hG} \mbox{ in } L^2(0,T; L^2(\hG)). \label{weakconv8}
\end{align}
\noindent It remains to show that $u$ satisfies \eqref{W1linweak}. For all $w\in C^\infty(0,T;W^{1,q+1}(\Omega))$ with $w(T)=0$ we have
\begin{equation*}
\begin{split}
&\int_0^t \int_\Omega \Bigl\{ a u_{tt} w + c^2 \nabla u \cdot \nabla w
+ b\Bigl((1-\delta) +\delta|\nabla u_t|^{q-1}\Bigr)\nabla u_t \cdot \nabla w +f u_t w \Bigr\} \, dx\, ds \\
& +\alpha \int_0^t \int_{\hG}u_t w \, dx \, ds-\int_0^t \int_{\Gamma} gw \, dx \, ds \\
=& \, -\int_0^t \int_\Omega [u_{t}-u_{t}^{(k)}] \Bigl(aw\Bigr)_t \, dx \, ds 
- \int_0^t \int_\Omega u_{t}^{(k)} \Bigl([a-a_k]w\Bigr)_t \, dx \, ds\\
&- \int_\Omega u_{1} \Bigl([a(0)-a_k(0)]w(0)\Bigr) \, dx 
+ \int_0^t \int_\Omega c^2 [\nabla u - \nabla u^{(k)})] \cdot \nabla w \, dx \, ds \\
&+ \int_0^t \int_\Omega  b(1-\delta) [\nabla u_t - \nabla u_t^{(k)}]\cdot  \nabla w \, dx \, ds \\
&+ \int_0^t \int_\Omega b\delta [|\nabla u_t|^{q-1}\nabla u_t - |\nabla u_t^{(k)}|^{q-1}\nabla u_t^{(k)}]\cdot \nabla w \, dx \, ds \\
&+ \int_0^t \int_\Omega [u_t - u_t^{(k)}] f w \, dx \, ds + \int_0^t \int_\Omega [f-f_k] u_t^{(k)}  w \, dx\, ds \\
&+\alpha \int_0^t \int_{\hG} [u_t - u_t^{(k)}]w \, dx \, ds- \int_0^t \int_\Gamma [g - g_k] w \, dx \, ds  \to 0 \text{ as } k \to \infty, \end{split}
\end{equation*}
since we demanded that $a_k \to a$ in $L^{\infty}(0,T;L^{\infty}(\Omega))$, $a_{k,t} \to a_t$ in $L^{\infty}(0,T;L^2(\Om))$, $f_k \to f$ in $L^{\infty}(0,T;L^2(\Omega))$ and $g_k \to g$ in 
$L^{\frac{q+1}{q}}(0,T;W^{-\frac{q}{q+1},\frac{q+1}{q}}(\Gamma))$. This relation proves that $u$ solves \eqref{W1linweak}. The weak limit then satisfies the estimate \eqref{est1}. \\
\noindent \textbf{3. Uniqueness.} To confirm uniqueness, note that the difference $\hat{u}=u^1-u^2$ between any two weak solutions $u^1,u^2$ of \eqref{W1lin} is a weak solution of the problem
\begin{equation}\label{W1_uniq}
\begin{cases}
a\hat{u}_{tt}-c^2\Delta \hat{u}-b(1-\delta)\Delta \hat{u}_t -b\delta \,\text{div} \Bigl(|\nabla u^1_t|^{q-1}\nabla u^1_t-|\nabla u^2_t|^{q-1}\nabla u^2_t \Bigr)\\
+f\hat{u}_t\ = \ 0,
\vspace{2mm} \\
c^2 \frac{\partial \hat{u}}{\partial n}+b(1-\delta)\frac{\partial \hat{u}_t}{\partial n}+b\delta(|\nabla u^1_t|^{q-1}\frac{\partial u^1_t}{\partial n}-|\nabla u^2_t|^{q-1}\frac{\partial u^2_t}{\partial n})=0  \ \ \text{on} \ \Gamma,
\vspace{2mm} \\
\alpha \hat{u}_t+c^2 \frac{\partial \hat{u}}{\partial n}+b(1-\delta)\frac{\partial \hat{u}_t}{\partial n}+b\delta(|\nabla u^1_t|^{q-1}\frac{\partial u^1_t}{\partial n}-|\nabla u^2_t|^{q-1}\frac{\partial u^2_t}{\partial n})=0 \ \text{on} \ \hat{\Gamma},\vspace{2mm} \\
(\hat{u},\hat{u}_t)|_{t=0}=(0,0).
\end{cases}
\end{equation}
Multiplication of \eqref{W1_uniq} by $\hat{u}_t$ yields
\begin{align*}
&\frac{1}{2}\left[\int_\Omega a(\hat{u}_t)^2\, dx
+ c^2 |\nabla \hat{u}|_{L^2(\Omega)}^2 \right]_0^t +b(1-\delta)\int_0^t \int_{\Om} |\nabla \hat{u}_t|^2 \, dx \, ds
+\alpha \int_0^t |\hat{u}_t|^2_{L^2(\hG)} \, ds \\
& \quad + \int_0^t  \int_\Omega (f-\frac12a_t) (\hat{u}_t)^2 \, dx \, ds \ \leq \ 0,
\end{align*}
since due to the inequality \eqref{pLaplace_ineq} we have 
\begin{equation}\label{wsig_nonneg}
 b\delta \int_0^t \int_{\Om} (|\nabla u^1_t|^{q-1}\nabla u^1_t-|\nabla u^2_t|^{q-1}\nabla u^2_t) \cdot \nabla \hat{u}_t \, dx \, ds \geq 0.
\end{equation}
From here we conclude that $\hat{u}_t=0$ and $\nabla \hat{u}=0$ almost everywhere, which results in the solution being unique up to an additive constant. The initial condition $\hat{u}|_{t=0}=0$ provides us with uniqueness.\\
\noindent \textbf{4. Higher energy estimate.} To obtain higher order estimate \eqref{est2}, we will test \eqref{W1linweakdis} with $w_n=u_{n,tt}^{(k)}(t)\in V_n$ and then combine the result with the lower order estimate \eqref{est1} we derived previously. Multiplication by $u_{n,tt}^{(k)}(t)$ and integration with respect to time produces
\begin{align} \label{W1_higher_}
&\int_0^t\int_\Omega a_k(u^{(k)}_{n,tt})^2\, dx\, ds
+\left[\frac{b(1-\delta)}{2}|\nabla u^{(k)}_{n,t}|_{L^2(\Omega)}^2 
+\frac{b\delta}{q+1}|\nabla u^{(k)}_{n,t}|_{L^{q+1}(\Omega)}^{q+1}\right]_0^t \nn \\
&+\frac{\alpha}{2} \Bigl[\int_{\hG} (u^{(k)}_{n,t})^2  \, dx\Bigr]_0^t
\nn \\
=&\,
c^2\int_0^t |\nabla u^{(k)}_{n,t}|_{L^2(\Omega)}^2\, ds
-c^2\left[\int_\Omega \nabla u^{(k)}_n \cdot \nabla  u^{(k)}_{n,t} \, dx\right]_0^t+\int_0^t \int_{\Gamma} g_k u^{(k)}_{n,tt} \, dx \, ds \\ 
& -\int_0^t\int_\Omega f_k u^{(k)}_{n,t}u^{(k)}_{n,tt}\, dx\, ds. \nn
\end{align}
\noindent To estimate the boundary integral on the right side, we employ \eqref{Poincare_est2} to obtain
\begin{equation} \label{W1_ineq2}
\begin{split}
&\int_0^t \int_{\Gamma}  g_k u^{(k)}_{n,tt} \, dx \, ds \\
 \leq & \ C_1^{tr}|u^{(k)}_{n,t}(t)|_{W^{1,q+1}(\Om)}|g_k(t)|_{W^{-\frac{q}{q+1},\frac{q+1}{q}}(\Gamma)}-\int_{\Gamma} g_k(0)u^{(k)}_{n,t}(0) \, dx  \\
&-\int_0^t \int_{\Gamma} g_{k,t} u^{(k)}_{n,t} \, dx \, ds  \\
 \leq& \ \eta \|\nabla u^{(k)}_{n,t}\|^{q+1}_{L^{\infty}(0,T;L^{q+1}(\Om))}+ \epsilon_0 \|u^{(k)}_{n,t}\|^{2}_{L^{\infty}(0,T;L^2(\Om))} \\
 & + C(\eta, q+1)(C_1^{tr}(1+C_P))^{\frac{q+1}{q}}\|g_k\|^{\frac{q+1}{q}}_{L^{\infty}(0,T;W^{-\frac{q}{q+1},\frac{q+1}{q}}(\Gamma))}  \\
&+\frac{1}{2\epsilon_0}(C_1^{tr}C_2^{\Om})^2\|g_k\|^2_{L^{\infty}(0,T;W^{-\frac{q}{q+1},\frac{q+1}{q}}(\Gamma))} +|u_{1,n}|^{q+1}_{W^{1,q+1}(\Omega)} \\ &+C(1,q+1)(C_1^{tr}|g_k(0)|_{W^{-\frac{q}{q+1},\frac{q+1}{q}}(\Gamma)})^{\frac{q+1}{q}}\\
&  +C(\epsilon_1,q+1)(C_1^{tr}(1+C_P))^{\frac{q+1}{q}}\|g_{k,t}\|^{\frac{q+1}{q}}_{L^{\frac{q+1}{q}}(0,T;W^{-\frac{q}{q+1},\frac{q+1}{q}}(\Gamma))}  \\
& +\epsilon_1 \|\nabla u_{n,t}^{(k)}\|^{q+1}_{L^{q+1}(0,T;L^{q+1}(\Om))}+\frac{1}{2\epsilon_0}(C_1^{tr} C^{\Om}_2)^2\|g_{k,t}\|^2_{L^{1}(0,T;W^{-\frac{q}{q+1},\frac{q+1}{q}}(\Gamma))}, 
\end{split}
\end{equation}
which together with 
\begin{align} \label{W1_ineq3}
&\int_0^t \int_{\Omega} f_k u^{(k)}_{n,t} u^{(k)}_{n,tt}\, dx \, ds \nn \\
\leq& \ \frac{1}{2 \tau}(C^{\Omega}_{H^1,L^4})^4 \|f_k\|^2_{L^{\infty}(0,T;H^1(\Omega))}\Bigl[T\|u^{(k)}_{n,t}\|^2_{L^{\infty}(0,T;L^2(\Omega))}\nn \\
 &+\|\nabla u^{(k)}_{n,t}\|^2_{L^2(0,T;L^2(\Omega))}\Bigr] 
 +\frac{\tau}{2} \|u^{(k)}_{n,tt}\|^2_{L^2(0,T;L^2(\Omega))}, 
\end{align}
and taking $\displaystyle \esssup_{[0,T]}$ in \eqref{W1_higher_} leads to the estimate
\begin{equation}  \label{W1lower}
\begin{split}
& \frac{\underline{a}-\tau}{2}  \|u^{(k)}_{n,tt}\|^2_{L^2(0,T;L^2(\Omega))}+\Bigl(\frac{b(1-\delta)}{4}-\sigma \Bigr)\|\nabla u^{(k)}_{n,t}\|^2_{L^{\infty}(0,T;L^2(\Om))}  \\
&+\Bigl(\frac{b\delta}{2(q+1)}-\eta \Bigr)\|\nabla u^{(k)}_{n,t}\|^{q+1}_{L^{\infty}(0,T;L^{q+1}(\Om))}+\frac{\alpha}{4}\|u^{(k)}_{n,t}\|^2_{L^{\infty}(0,T;L^2(\hG))}  \\
 \leq& \ c^2\|\nabla u^{(k)}_{n,t}\|^2_{L^2(0,T;L^2(\Om))}+\sigma |\nabla u_{1,n}|^2_{L^2(\Omega)}+\frac{c^4}{4 \sigma}(\|\nabla u^{(k)}_n\|^2_{L^{\infty}(0,T;L^2(\Omega))}  \\
 &+|\nabla u_{0,n}|^2_{L^2(\Omega)}) +\frac{1}{2 \tau}(C^{\Omega}_{H^1,L^4})^4 \tilde{b}^2[T\|u^{(k)}_{n,t}\|^2_{L^{\infty}(0,T;L^2(\Omega))} \\
&+\|\nabla u^{(k)}_{n,t}\|^2_{L^2(0,T;L^2(\Omega))}] 
+ \eta \|\nabla u^{(k)}_{n,t}\|^{q+1}_{L^{\infty}(0,T;L^{q+1}(\Om))}\\
& +\epsilon_1 \|\nabla u_{n,t}^{(k)}\|^{q+1}_{L^{q+1}(0,T;L^{q+1}(\Om))}
+ \epsilon_0 \| u^{(k)}_{n,t}\|^{2}_{L^{\infty}(0,T;L^2(\Om))} \\
&+\frac{1}{2\epsilon_0}(C_1^{tr}C_2^{\Om})^2\Bigl(\|g_k\|^2_{L^{\infty}(0,T;W^{-\frac{q}{q+1},\frac{q+1}{q}}(\Gamma))} 
+\|g_{k,t}\|^2_{L^{1}(0,T;W^{-\frac{q}{q+1},\frac{q+1}{q}}(\Gamma))}\Bigr) \\ &+|u_{1,n}|^{q+1}_{W^{1,q+1}(\Omega)}+C(1,q+1)(C_1^{tr}|g_k(0)|_{W^{-\frac{q}{q+1},\frac{q+1}{q}}(\Gamma)})^{\frac{q+1}{q}} \\
& + \frac{b(1-\delta)}{2}|\nabla u_{1,n}|^2_{L^2(\Om)} +\frac{\alpha}{2} |u_{1,n}|^2_{L^2(\hG)}+\frac{b\delta}{q+1}|\nabla u_{1,n}|^{q+1}_{L^{q+1}(\Om)} \\
&+(C_1^{tr}(1+C_P))^{\frac{q+1}{q}}\Bigl(C(\epsilon_1,q+1) \|g_{k,t}\|^{\frac{q+1}{q}}_{L^{\frac{q+1}{q}}(0,T;W^{-\frac{q}{q+1},\frac{q+1}{q}}(\Gamma))} \\
&+C(\eta, q+1)\|g_k\|^{\frac{q+1}{q}}_{L^{\infty}(0,T;W^{-\frac{q}{q+1},\frac{q+1}{q}}(\Gamma))}\Bigr).
\end{split}
\end{equation}
\noindent Since there are terms on the right side in \eqref{W1lower} which cannot be dominated by the terms on the left hand side, we need to also employ the lower estimate \eqref{lower_estimate}. Adding \eqref{lower_estimate} and $\mu$ times \eqref{W1lower}  yields \eqref{est2} with $u$ replaced by $u^{(k)}_n$, provided that we choose
\begin{align} \label{W1constants}
& 0< \tau < \underline{a}, \ 0 < \eta < \frac{b\delta}{2(q+1)}, \ 0< \sigma < \frac{b(1-\delta)}{4}, \nn \\
& 0< \mu < \min \Bigl\{\frac{\frac{b(1-\delta)}{2}-(C^{\Omega}_{H^1,L^4})^2\hat{b}}{\frac{1}{2\tau}(C^{\Omega}_{H^1,L^4})^4\tilde{b}^2 +c^2},\frac{\frac{\underline{a}}{4}-(C^{\Omega}_{H^1,L^4})^2\hat{b}T- \epsilon_0}{\epsilon_0 +\frac{1}{2\tau}(C^{\Omega}_{H^1,L^4})^4\tilde{b}^2T},\frac{\sigma}{c^2},\frac{\frac{b\delta}{2}-\epsilon_1}{\epsilon_1} \Bigr\},
\end{align}
so that the coefficients in \eqref{est2} are positive.\\
 As by assumption
$g_k \in L^{\infty}(0,T;W^{-\frac{q}{q+1},\frac{q+1}{q}}(\Gamma))$ and $g_{k,t} \in L^{\frac{q+1}{q}}(0,T;W^{-\frac{q}{q+1},\frac{q+1}{q}}(\Gamma))$, $\big(u_n^{(k)}\big)_{n\in\N}$ is a bounded sequence in 
\begin{equation*}
\begin{aligned}
X:=~& C^1(0,T;W^{1,q+1}(\Om)) \cap H^2(0,T;L^2(\Om)).
\end{aligned}
\end{equation*}
We further obtain 
\begin{align}
& \big( u_{n,t}^{(k)}\big)_{\in\N}\text{ is uniformly bounded in } L^2(0,T; L^2(\Omega)), \label{bound5}\\
& \big(\nabla u_{n,t}^{(k)}\big)_{\in\N}\text{ is uniformly bounded in } L^{q+1}(0,T; L^{q+1}(\Omega)),   \label{bound6} \\
& |\nabla u_{n,t}^{(k)}|^{q-1} \nabla u_{n,t}^{(k)} \ \text{ is uniformly bounded in } L^{\frac{q+1}{q}}(0,T;L^{\frac{q+1}{q}}(\Omega)) \ \text{and} \label{bound7}  \\
& \big( u_{n,t|\hG}^{(k)}\big)_{\in\N}\text{ is uniformly bounded in } L^2(0,T; L^2(\hG)), \label{bound8}
\end{align}
which are reflexive Banach spaces. \\
From here, after proceeding as in the step 1.(c) and 2, we can conclude that \eqref{W1linweak} has a unique solution $u \in X$ which satisfies the estimate \eqref{est2}.
\end{proof}
Let us now consider the boundary value problem \eqref{W1lin} with an added  lower order linear damping term:
\begin{equation}\label{W1lin_beta}
\begin{cases}
a u_{tt}-c^2\Delta u-b\,\text{div}\Bigl( 
((1-\delta) +\delta|\nabla u_t|^{q-1})\nabla u_t\Bigr)+\beta u_t \vspace{2mm} \\+fu_t=0
\, \text{ in } \Omega \times (0,T),
\vspace{2mm} \\
c^2 \frac{\partial u}{\partial n}+b((1-\delta)+\delta|\nabla u_t|^{q-1})\frac{\partial u_t}{\partial n}=g  \ \ \text{on} \ \Gamma \times (0,T),
\vspace{2mm} \\
\alpha u_t+c^2 \frac{\partial u}{\partial n}+b((1-\delta)+\delta|\nabla u_t|^{q-1})\frac{\partial u_t}{\partial n}=0  \ \ \text{on} \ \hat{\Gamma} \times (0,T),
\vspace{2mm} \\
(u,u_t)=(u_0, u_1) \ \ \text{on} \ \overline{\Om}\times \{t=0\},
\end{cases}
\end{equation}
where $\beta>0$. This is a linearized version of the problem \eqref{W1_beta} with nonlinearity appearing only through the damping term. The additionaly introduced $\beta-$lower order term will allow us to remove restrictions on final time $T$ in the estimates \eqref{est1} and \eqref{est2}.
Indeed, by testing the equation with $u_t$ and integrating with respect to space and time,  we obtain
\begin{equation*}
\begin{split}
&\frac{1}{2}\left[\int_\Omega a\left(u_{t}\right)^2\, dx
+ c^2 |\nabla u|_{L^2(\Omega)}^2 \right]_0^t+ b \int_0^t \int_{\Omega} \Bigl((1-\delta) +\delta|\nabla u_{t}|^{q-1}\Bigr)|\nabla u_{t}|^2 \, dx \, ds \\
&+\beta \int_0^t \int_{\Omega} |u_{t}|^{2} \, dx \, ds +\alpha \int_0^t \int_{\hG} | u_{t}|^2 \, dx \, ds\\
\leq& \
\hat{b}(C^{\Om}_{H^{1},L^4})^2 \int_0^t | u_{t}|_{H^{1}(\Omega)}^2 \, ds
+\epsilon_1 \|\nabla u_{t}\|^{q+1}_{L^{q+1}(0,T;L^{q+1}(\Om))}\nn \\
&+C(\epsilon_1,q+1)(C_1^{tr}(1+C_P))^{\frac{q+1}{q}}\|g\|^{\frac{q+1}{q}}_{L^{\frac{q+1}{q}}(0,T;W^{-\frac{q}{q+1},\frac{q+1}{q}}(\Gamma))} \nn \\
&+\epsilon_0 \|u_{t}\|^2_{L^{\infty}(0,T;L^2(\Om)}+\frac{1}{4\epsilon_0}(C_1^{tr} C^{\Om}_2)^2\|g\|^2_{L^{1}(0,T;W^{-\frac{q}{q+1},\frac{q+1}{q}}(\Gamma))},
\end{split}
\end{equation*}
which leads to the lower order energy estimate
\begin{equation} \label{W1_beta_est1}
\begin{split}
&(\frac{\underline{a}}{4}-\epsilon_0)\|u_{t}\|^2_{\LiT}
 + \Bigl(\frac{b(1-\delta)}{2}-\hat{b}(C^{\Om}_{H^{1},L^4})^2\Bigr)\|\nabla u_{t}\|^2_{\LT}  \\
&  +\Bigl(\frac{b\delta}{2}-\epsilon_1\Bigr)\|\nabla u_{t}\|^{q+1}_{\LqT} 
+(\frac{\beta}{2}-\hat{b}(C^{\Om}_{H^{1},L^4})^2))\|u_t\|^{2}_{\LT}  \\
& + \frac{c^2}{4} \|\nabla u\|_{\LiT}^2+\frac{\alpha}{2}\| u_{t}\|^2_{L^2(0,T;L^2(\hG))} \\
\leq& \  \frac{\overline{a}}{2}|u_{1}|^2_{L^2(\Om)}+\frac{c^2}{2}|\nabla u_{0}|^2_{L^2(\Om)}+\frac{1}{4\epsilon_0}(C_1^{tr} C^{\Om}_2)^2\|g\|^2_{L^{1}(0,T;W^{-\frac{q}{q+1},\frac{q+1}{q}}(\Gamma))} \\
&+C(\epsilon_1,q+1)(C_1^{tr}(1+C_P))^{\frac{q+1}{q}}\|g\|^{\frac{q+1}{q}}_{L^{\frac{q+1}{q}}(0,T;W^{-\frac{q}{q+1},\frac{q+1}{q}}(\Gamma))}, 
\end{split}
\end{equation}
provided that $\|f-\frac{1}{2}a_t\|_{L^{\infty}(0,T;L^2(\Om))} \leq \hat {b} < \text{min} \{\frac{\beta}{2(C^{\Om}_{H^{1},L^4})^2},\frac{b(1-\delta)}{2(C^{\Om}_{H^{1},L^4})^2}\}$ and that  $0< \epsilon_0 < \frac{\underline{a}}{4}$, $0<\epsilon_1 < \frac{b \delta}{2}$.\\
\noindent Testing with $u_{tt}$ and adding $\mu$ times the obtained estimate to \eqref{W1_beta_est1} results in the higher order energy estimate valid for arbitrary time:
\begin{equation}  \label{W1lin_est2_beta}
\begin{split}
& \mu \frac{\underline{a}-\tau}{2} \|u_{tt}\|^2_{L^2(0,T;L^2(\Omega))}+\mu \Bigl(\frac{b(1-\delta)}{4}-\sigma \Bigr)\|\nabla u_{t}\|^2_{L^{\infty}(0,T;L^2(\Om))}  \\
&+\mu(\frac{\underline{a}+\mu\beta}{4}-\epsilon_0(\mu+1))\|u_t\|^2_{L^{\infty}(0,T;L^2(\Om))}+\check{b} \|\nabla u_{t}\|^2_{L^2(0,T;L^2(\Omega))} \\
&+\mu \frac{\alpha}{4}\|u_{t}\|^2_{L^{\infty}(0,T;L^2(\hG))}+(\frac{b\delta}{2}-\mu(\epsilon_1+1)) \|\nabla u_{t}\|^{q+1}_{L^{q+1}(0,T;L^{q+1}(\Om))} 
  \\
  & + \Bigl(\frac{\beta}{2}-(C^{\Om}_{H^{1},L^4})^2\hat{b}-\frac{\mu}{2\tau}(C^{\Om}_{H^{1},L^4})^4\tilde{b}^2\Bigr)\|u_t\|^2_{\LT}\\
& +\frac{c^2}{4}\Bigl(1-\mu \frac{c^2}{\sigma}\Bigr) \|\nabla u\|^2_{L^{\infty}(0,T;L^2(\Om))}+\frac{\alpha}{2}\|u_{t}\|^2_{L^2(0,T;L^2(\hG))} \\
&+\mu (\frac{b\delta}{2(q+1)}-\eta)\|\nabla u_{t}\|^{q+1}_{L^{\infty}(0,T;L^{q+1}(\Om))} \\
\leq&   \ \overline{C}\Bigl(C_{\Gamma}(g)+|u_{1}|^2_{H^1(\Om)}+|\nabla u_{0}|^2_{L^2(\Omega)} +|u_{1}|^{q+1}_{W^{1,q+1}(\Omega)}+ |u_{1}|^2_{L^2(\hG)} \Bigr), 
\end{split}
\end{equation}
with $\check{b}= \frac{b (1-\delta)}{2}-(C^{\Omega}_{H^1,L^4})^2\hat{b}  -\mu(\frac{1}{2\tau}(C^{\Omega}_{H^1,L^4})^4\tilde{b}^2+c^2)$, for some appropriately chosen $\overline{C}>0$. Therefore we obtain:
\begin{proposition} \label{prop:W1lin_beta}
Let $\beta>0$ and the assumptions (i) in Proposition \ref{prop:W1lin} hold, with 
\begin{align*}
\|f-\frac{1}{2}a_t\|_{L^{\infty}(0,T;L^2(\Om))} \leq \hat {b} < \text{min} \{\frac{\beta}{2(C^{\Om}_{H^{1},L^4})^2},\frac{b(1-\delta)}{2(C^{\Om}_{H^{1},L^4})^2}\} \,. \label{W1_f_a_beta}
\end{align*}
\noindent Then \eqref{W1lin_beta} has a unique weak solution in $\tilde{X}$, with $\tilde{X}$ defined as in \eqref{W1_Xtilde}, which satisfies \eqref{W1_beta_est1} for some sufficiently small constants $\epsilon_0, \epsilon_1>0$. \\
\noindent If, in addition to (i), the assumptions (ii) in Proposition \ref{prop:W1lin} are satisfied, then $u\in X$, with $X$ defined as in \eqref{W1_X},
and $u$ satisfies the energy estimate \eqref{W1lin_est2_beta} for some sufficiently small constants $\epsilon_0, \epsilon_1, \mu, \sigma, \tau>0$ and some large enough $\overline{C}$, independent of $T$.
\end{proposition}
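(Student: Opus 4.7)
The plan is to replay the Galerkin argument of Proposition~\ref{prop:W1lin} line by line, and to track exactly how the extra linear damping term $\beta u_t$ enters the two energy identities, replacing the $T$-dependent absorptions of the previous proof. The smoothing step~1 (construction of $(a_k,f_k,g_k)$, the weighted orthonormal basis $\{w_m\}$, and the subspaces $V_n$) and the ODE existence of Galerkin approximations $u^{(k)}_n\in C^\infty(0,\tilde T;V_n)$ solving the discretization of \eqref{W1lin_beta} carry over verbatim.

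For the lower order estimate I would test the Galerkin equation with $w_n=u^{(k)}_{n,t}$ and integrate in time. The new contribution on the left is the nonnegative $\beta\int_0^t|u^{(k)}_{n,t}|^2_{L^2(\Omega)}\,ds$. On the right, after splitting the indefinite term $(f_k-\tfrac{1}{2}a_{k,t})(u^{(k)}_{n,t})^2$ via $H^1(\Om)\hookrightarrow L^4(\Om)$, the $|\nabla u^{(k)}_{n,t}|^2_{L^2}$ part is absorbed by the strong damping as in \eqref{W1lin_1est}--\eqref{lower_estimate}, while the $|u^{(k)}_{n,t}|^2_{L^2}$ part is now absorbed directly by the new $\beta$-term under the hypothesis $\hat b<\beta/(2(C^{\Om}_{H^{1},L^4})^2)$, without invoking \eqref{time_ineq}. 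Thus the $T$-dependent restriction from \eqref{W1_f_a} disappears and \eqref{W1_beta_est1} follows with constants independent of $T$. The boundary integral on $\Gamma$ is handled exactly as in \eqref{W1_inequality1}. Weak convergence in $\tilde X$, the passage $n\to\infty$ and $k\to\infty$, and uniqueness (where the extra term in the energy identity for the difference $\hat u=u^1-u^2$ is the nonnegative $\beta\int_0^t|\hat u_t|^2_{L^2}\,ds$, which only helps) are identical to steps~1(c), 2 and 3 of Proposition~\ref{prop:W1lin}.

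For the higher order estimate I would test with $w_n=u^{(k)}_{n,tt}$, producing the additional term $\beta\int_0^t\int_\Om u^{(k)}_{n,t}u^{(k)}_{n,tt}\,dx\,ds=\tfrac{\beta}{2}\bigl[|u^{(k)}_{n,t}|^2_{L^2(\Om)}\bigr]_0^t$, which contributes $\tfrac{\beta}{2}|u^{(k)}_{n,t}(t)|^2_{L^2(\Om)}$ on the left (with an $\tfrac{\beta}{2}|u_1|^2_{L^2}$ initial-data contribution on the right, absorbed into $\overline C$). The crucial step is the treatment of the term $\int_0^t\int_\Om f_k u^{(k)}_{n,t}u^{(k)}_{n,tt}\,dx\,ds$ from \eqref{W1_ineq3}: its $|u^{(k)}_{n,t}|^2_{L^2(L^2)}$ piece is now absorbed by the $\beta\|u^{(k)}_{n,t}\|^2_{L^2(L^2)}$ term inherited from the lower estimate, rather than by the $T\|u^{(k)}_{n,t}\|^2_{L^\infty(L^2)}$ bound used in Proposition~\ref{prop:W1lin}. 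Adding $\mu$ times this higher estimate to \eqref{W1_beta_est1} and selecting $\mu,\sigma,\tau,\eta,\epsilon_0,\epsilon_1$ small enough, in the spirit of \eqref{W1constants} but now with every factor of $T$ deleted from the denominator controlling $\mu$, delivers \eqref{W1lin_est2_beta}.

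The main obstacle is exactly the bookkeeping alluded to above: verifying term by term that every occurrence of a $T$-factor in the coefficient arithmetic of Proposition~\ref{prop:W1lin} is now replaced by a $\beta$-controlled absorption, and that the resulting coefficients on the left hand sides of \eqref{W1_beta_est1} and \eqref{W1lin_est2_beta} remain strictly positive for an appropriate choice of the small parameters, uniformly in $T$. Once this is in place, no new compactness, convergence or uniqueness argument is required beyond those already developed for Proposition~\ref{prop:W1lin}.
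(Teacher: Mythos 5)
Your proposal is correct and follows essentially the same route as the paper: the paper likewise derives \eqref{W1_beta_est1} by testing with $u_t$ and letting the new $\beta\|u_t\|^2_{L^2(0,T;L^2(\Omega))}$ term (rather than \eqref{time_ineq}) absorb the $L^2$-part of the $(f-\tfrac12 a_t)(u_t)^2$ contribution under $\hat b<\beta/(2(C^{\Om}_{H^1,L^4})^2)$, then tests with $u_{tt}$, absorbs the $\|u_t\|^2_{L^2(0,T;L^2(\Omega))}$ piece of \eqref{W1_ineq3} into the same $\beta$-term, and adds $\mu$ times the result to \eqref{W1_beta_est1}, with the Galerkin, convergence and uniqueness steps inherited verbatim from Proposition \ref{prop:W1lin}. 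Your bookkeeping of where the $T$-factors disappear matches the coefficients appearing in \eqref{W1_beta_est1} and \eqref{W1lin_est2_beta}.
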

We continue with considering an equation with an added lower order nonlinear damping term:
\begin{equation}\label{W1lin_gamma}
\begin{cases}
a u_{tt}-c^2\Delta u-b\,\text{div}\Bigl( 
((1-\delta) +\delta|\nabla u_t|^{q-1})\nabla u_t\Bigr)+\gamma |u_t|^{q-1}u_t+fu_t \\ =0
\, \text{ in } \Omega \times (0,T],
\vspace{2mm} \\
c^2 \frac{\partial u}{\partial n}+b((1-\delta)+\delta|\nabla u_t|^{q-1})\frac{\partial u_t}{\partial n}=g  \ \ \text{on} \ \Gamma \times (0,T],
\vspace{2mm} \\
\alpha u_t+c^2 \frac{\partial u}{\partial n}+b((1-\delta)+\delta|\nabla u_t|^{q-1})\frac{\partial u_t}{\partial n}=0  \ \ \text{on} \ \hat{\Gamma} \times (0,T],
\vspace{2mm} \\
(u,u_t)=(u_0, u_1) \ \ \text{on} \ \overline{\Om}\times \{t=0\},
\end{cases}
\end{equation}
with $\gamma>0$, which is motivated by the problem \eqref{W1_gamma}. Once we multiply \eqref{W1lin_gamma} by $u_t$ and integrate by parts, we produce 
\begin{align*}
&\frac{1}{2}\left[\int_\Omega a\left(u_{t}\right)^2\, dx
+ c^2 |\nabla u|_{L^2(\Omega)}^2 \right]_0^t
+b \int_0^t \int_{\Omega} \Bigl((1-\delta) +\delta|\nabla u_{t}|^{q-1}\Bigr)|\nabla u_{t}|^2 \, dx \, ds \\
& \quad \  +\alpha \int_0^t \int_{\hG} | u_{t}|^2 \, dx \, ds+\gamma \int_0^t \int_{\Omega} |u_{t}|^{q+1} \, dx \, ds\\
=& \ \int_0^t \int_{\Om} (f-\frac{1}{2} a_t) (u_{t})^2 \, dx\, ds
+\int_0^t \int_{\Gamma} gu_t \, \ dx \, ds.
\end{align*}
We will make use of the following inequality
\begin{equation} \label{W1_boundary_gamma1}
\begin{split}
\int_0^t \int_{\Gamma} gu_t \,  dx \, ds \leq& \ \frac{\epsilon_0}{2} \|u_t\|^{q+1}_{L^{q+1}(0,T;W^{1,q+1}(\Om))} \\
& +C(\tfrac{\epsilon_0}{2}, q+1)(C_1^{tr} \|g\|_{L^{\frac{q+1}{q}}(0,T,W^{-\frac{q}{q+1},\frac{q+1}{q}}(\Gamma))})^{\frac{q+1}{q}},
\end{split}
\end{equation}
and, for $q>1$,
\begin{equation*}
\begin{split}
&\int_0^t \int_{\Om} (f-\frac12 a_t) (u_{t})^2 \, dx\, ds \leq \int_0^t \Bigl(\int_{\Om} |u_t|^{q+1} \, dx \Bigr)^{\frac{2}{q+1}}\Bigl(\int_{\Om} |f-\frac12 a_t|^{\frac{q+1}{q-1}}\Bigr)^{\frac{q-1}{q+1}} \, ds \\
=& \ \int_0^t |u_t|^2_{L^{q+1}(\Om)} |f-\frac12 a_t|_{L^{\frac{q+1}{q-1}}(\Om)} \, ds \\
\leq & \ \frac{\epsilon_0}{2}\|u_t\|^{q+1}_{L^{q+1}(0,T;L^{q+1}(\Om))}+C(\tfrac{\epsilon_0}{2}, \tfrac{q+1}{2})\|f-\frac12 a_t\|^{\frac{q+1}{q-1}}_{L^{\frac{q+1}{q-1}}(0,T;L^{\frac{q+1}{q-1}}(\Om))},
\end{split}
\end{equation*}
to obtain lower order energy estimate
\begin{align}\label{W1_gamma_est1}
&\frac{\underline{a}}{4}\|u_t\|^2_{L^{\infty}(0,T;L^2(\Om))}+\frac{c^2}{4}\|\nabla u\|^2_{L^{\infty}(0,T;L^2(\Om))}+\frac{b(1-\delta)}{2}\|\nabla u_t\|^2_{L^2(0,T;L^2(\Om))}  \nn \\
&+\frac{b\delta-\epsilon_0}{2}\|\nabla u_t\|^{q+1}_{\LqT}+(\frac{\gamma}{2}-\epsilon_0)\|u_t\|^{q+1}_{\LqT} \nn \\&+\frac{\alpha}{2}\|u_t\|^2_{L^2(\hG)} \nn  \\
\leq& \ C(\tfrac{\epsilon_0}{2}, q+1)(C_1^{tr} \|g\|_{L^{\frac{q+1}{q}}(0,T;W^{-\frac{q}{q+1},\frac{q+1}{q}}(\Gamma))})^{\frac{q+1}{q}}+\frac{\overline{a}}{2}|u_1|^2_{L^2(\Om)} \\
&+C(\tfrac{\epsilon_0}{2}, \tfrac{q+1}{2})\|f-\frac12 a_t\|_{L^{\frac{q+1}{q-1}}(0,T;L^{\frac{q+1}{q-1}}(\Om))}^{\frac{q+1}{q-1}}
+\frac{c^2}{2}|\nabla u_0|^2_{L^2(\Om)}, \nn
\end{align}
assuming that $f, a_t \in L^{\frac{q+1}{q-1}}(0,T;L^{\frac{q+1}{q-1}}(\Om))$ and $0<\epsilon_0< \frac{\gamma}{2}$.\\
For obtaining higher order estimate, we multiply \eqref{W1lin_gamma} with $u_{tt}$, integrate with respect to space and time and make use of the estimate
\begin{align} \label{W1_boundary_gamma2}
\int_0^t \int_{\Gamma}  gu_{tt} \, dx \, ds  \leq & \ \eta \|u_t\|^{q+1}_{L^{\infty}(0,T;W^{1,q+1}(\Om))}+\frac{\epsilon_0}{2} \|u_t\|^{q+1}_{L^{q+1}(0,T;W^{1,q+1}(\Om))} \nn \\
& +C(\eta, q+1)(C_1^{tr} \|g\|_{L^{\infty}(0,T,W^{-\frac{q}{q+1},\frac{q+1}{q}}(\Gamma))})^{\frac{q+1}{q}} \\
&+|u_1|^{q+1}_{W^{1,q+1}(\Omega)}+C(1,q+1) (C_1^{tr}|g(0)|_{W^{-\frac{q}{q+1},\frac{q+1}{q}}(\Gamma)})^{\frac{q+1}{q}}\nn \\
&+C(\tfrac{\epsilon_0}{2}, q+1)(C_1^{tr} \|g_t\|_{L^{\frac{q+1}{q}}(0,T,W^{-\frac{q}{q+1},\frac{q+1}{q}}(\Gamma))})^{\frac{q+1}{q}}\, . \nn
\end{align}
In order to avoid dependence on time, we approach estimate \eqref{W1_ineq3} differently this time: by employing the embedding $L^{q+1}(\Om) \hookrightarrow L^{4}(\Om)$, valid for $q \geq 3$, we obtain
\begin{align*}
\int_0^t \int_{\Omega} f u_{t} u_{tt}\, dx \, ds \leq& \ \frac{1}{2 \tau}(C^{\Omega}_{H^1,L^4})^2(C^{\Omega}_{L^{q+1},L^4})^2 \int_0^t |f(s)|^2_{H^1(\Omega)} |u_{t}(s)|^2_{L^{q+1}(\Omega)} \, ds  \\
& +\frac{\tau}{2} \|u_{tt}\|^2_{L^2(0,T;L^2(\Omega))}  \\
\leq& \ \frac{\epsilon_0}{2} \|u_{t}\|^{q+1}_{L^{q+1}(0,T;L^{q+1}(\Omega))}+\frac{\tau}{2} \|u_{tt}\|^2_{L^2(0,T;L^2(\Omega))}  \\
& +C(\tfrac{\epsilon_0}{2},\tfrac{q+1}{2})(\frac{1}{2 \tau}(C^{\Omega}_{H^1,L^4} C^{\Omega}_{L^{q+1},L^4})^2 \|f\|^2_{L^{\frac{2(q+1)}{q-1}}(0,T;H^1(\Omega))})^{\frac{q+1}{q-1}},
\end{align*}
which, together with \eqref{W1_gamma_est1}, leads to the higher order estimate
\begin{equation}  \label{W1lin_est2_gamma}
\begin{split}
& \mu \frac{\underline{a}-\tau}{2} \|u_{tt}\|^2_{L^2(0,T;L^2(\Omega))}+\mu \Bigl(\frac{b(1-\delta)}{4}-\sigma\Bigr)\|\nabla u_{t}\|^2_{L^{\infty}(0,T;L^2(\Om))}  \\
&+\frac{\underline{a}}{4}\|u_{t}\|^2_{L^{\infty}(0,T;L^2(\Om))} +\frac{c^2}{4}\Bigl(1-\mu \frac{c^2}{\sigma}\Bigr) \|\nabla u\|^2_{L^{\infty}(0,T;L^2(\Om))}   \\
& +\mu \frac{\alpha}{4}\|u_{t}\|^2_{L^{\infty}(0,T;L^2(\hG))}+\Bigl(\frac{b\delta}{2}-\frac{\epsilon_0}{2}(\mu+1)\Bigr) \|\nabla u_{t}\|^{q+1}_{L^{q+1}(0,T;L^{q+1}(\Om))}   \\
&+ \Bigl(\frac{b (1-\delta)}{2}-\mu c^2\Bigr)\|\nabla u_{t}\|^2_{L^2(0,T;L^2(\Omega))}+\frac{\alpha}{2}\|u_{t}\|^2_{L^2(0,T;L^2(\hG))}\\
& +\Bigl(\frac{\gamma}{2}-\epsilon_0(\mu+1)\Bigr)\|u_t\|^{q+1}_{\LqT} \\
&+\mu \Bigl(\frac{b\delta}{2(q+1)}-\eta \Bigr)\|\nabla u_{t}\|^{q+1}_{L^{\infty}(0,T;L^{q+1}(\Om))}\\
&+\mu\Bigl(\frac{\gamma}{2(q+1)}-\eta \Bigr)\|u_t\|^{q+1}_{L^{\infty}(0,T;L^{q+1}(\Om))}  \\
\leq& \  \, \overline{C} \Bigl(\displaystyle \sum_{s=0}^1 \|\frac{d^s}{d t^s} g\|^{\frac{q+1}{q}}_{L^{\frac{q+1}{q}}(0,T,W^{-\frac{q}{q+1},\frac{q+1}{q}}(\Gamma))}+\|g\|^{\frac{q+1}{q}}_{L^{\infty}(0,T,W^{-\frac{q}{q+1},\frac{q+1}{q}}(\Gamma))} 
  \\
& +\|f-\frac{1}{2} a_t\|_{L^{\frac{q+1}{q}}(0,T;L^{\frac{q+1}{q}}(\Om))}^{\frac{q+1}{q}}+\|f\|_{L^{\frac{2(q+1)}{q-1}}(0,T;H^1(\Om))}^{\frac{2(q+1)}{q-1}}+ |u_{1}|^2_{H^1(\Om)} \\
&+|\nabla u_{0}|^2_{L^2(\Om)} 
+|u_{1}|^{q+1}_{W^{1,q+1}(\Om)}+ |u_{1}|^2_{L^2(\hG)} \Bigr), 
\end{split}
\end{equation}
assuming $f \in L^{\frac{2(q+1)}{q-1}}(0,T;H^1(\Om)$ and choosing $\tau, \sigma, \eta, \mu>0$ to be sufficiently small.
\begin{proposition} \label{prop:W1lin_gamma}
Let $T>0$, $c^2$, $b$, $\gamma>0$, $\alpha \geq 0$, $\delta \in (0,1)$, $q>1$ and 
\begin{enumerate}
\item[(i)]
\begin{itemize} 
\item 
$a\in L^{\infty}(0,T;L^{\infty}(\Omega)) 
$, $a_t\in L^{\frac{q+1}{q-1}}(0,T;L^{\frac{q+1}{q-1}}(\Om))$,
$0<\underline{a}\leq a(t,x)\leq \overline{a}$,
\item
$f \in L^{\frac{q+1}{q-1}}(0,T;L^{\frac{q+1}{q-1}}(\Omega))$, 
\item
 $g \in L^{\frac{q+1}{q}}(0,T;W^{-\frac{q}{q+1},\frac{q+1}{q}}(\Gamma))$,
\item
$ u_0\in H^1(\Omega)$, $u_1\in L^2(\Omega)$.
\end{itemize}
\end{enumerate}
\noindent Then \eqref{W1lin_gamma} has a unique weak solution in $\tilde{X}$, with $\tilde{X}$ defined as in \eqref{W1_Xtilde}, which satisfies \eqref{W1_gamma_est1} for some $0<\epsilon_0< \frac{\gamma}{2}$. \\
\noindent If, in addition to (i), the following assumptions are satisfied
\begin{enumerate}
\item[(ii)]
\begin{itemize}
\item $q \geq 3$,
\item 
$f \in L^{\frac{2(q+1)}{q-1}}(0,T;H^1(\Om))$,
\item
 $g \in L^{\infty}(0,T;W^{-\frac{q}{q+1},\frac{q+1}{q}}(\Gamma))$,  $g_t \in L^{\frac{q+1}{q}}(0,T;W^{-\frac{q}{q+1},\frac{q+1}{q}}(\Gamma))$,
\item
$ u_1\in W^{1,q+1}(\Omega)$,
\end{itemize} 
\end{enumerate} then \eqref{W1lin_gamma} has a unique weak solution in $X$, with $X$ defined as in \eqref{W1_Xtilde}, which satisfies the energy estimate \eqref{W1lin_est2_gamma} for some sufficiently small constants $\mu, \sigma, \tau, \eta>0$ and some large enough $\overline{C}>0$, independent of $T$.
\end{proposition}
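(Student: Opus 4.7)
The plan is to mimic the Galerkin procedure from the proof of Proposition \ref{prop:W1lin}, adapting the energy estimates to accommodate the lower order nonlinear term $\gamma|u_t|^{q-1}u_t$. First I would introduce smooth-in-time approximations $(a_k), (f_k), (g_k)$ satisfying the analogous convergence properties (now using the weaker topology $L^{\frac{q+1}{q-1}}(0,T;L^{\frac{q+1}{q-1}}(\Om))$ for the approximation of $a_t$ and $f$), and for fixed $k$ construct Galerkin approximants $u_n^{(k)}\in V_n$ solving the finite dimensional ODE system. Because the added term $\gamma|u_t|^{q-1}u_t$ is Lipschitz in $u_t$ on bounded sets, local existence and uniqueness of $u_n^{(k)}\in C^\infty(0,\tilde T;V_n)$ follows from standard ODE theory exactly as before, and the uniform estimates will extend solutions to $[0,T]$.

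Next I would test with $w_n=u_{n,t}^{(k)}(t)$ and integrate in time. The computation is identical to that preceding the proposition, with the additional term $\gamma\int_0^t\int_\Omega |u_{n,t}^{(k)}|^{q+1}\,dx\,ds$ appearing on the left; the boundary integral is handled by \eqref{W1_boundary_gamma1}, and the term involving $f-\frac12 a_t$ is controlled using H\"older's inequality with exponents $\frac{q+1}{2}$ and $\frac{q+1}{q-1}$ (this is where the stronger integrability assumption on $a_t,f$ replaces the time restriction used in Proposition \ref{prop:W1lin}). Absorbing the small $\epsilon_0$-perturbation into the $\gamma$-term yields the uniform bound \eqref{W1_gamma_est1} and, consequently, the boundedness of $(u_n^{(k)})$ in $\tilde X$ together with the uniform bounds analogous to \eqref{bound1}--\eqref{bound4} and an additional uniform bound $(|u_{n,t}^{(k)}|^{q-1}u_{n,t}^{(k)})\in L^{\frac{q+1}{q}}(0,T;L^{\frac{q+1}{q}}(\Om))$.

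For the passage $n\to\infty$ and $k\to\infty$, I would extract weakly convergent subsequences as in steps 1(c) and 2 of the proof of Proposition \ref{prop:W1lin}. The identification of the weak limit of $|u_{n,t}^{(k)}|^{q-1}u_{n,t}^{(k)}$ with $|u_t^{(k)}|^{q-1}u_t^{(k)}$ is the one genuinely new point, and I expect it to be the main technical obstacle. However, since the linear problem passage works verbatim as in Proposition \ref{prop:W1lin} and the Galerkin bound yields strong compactness of $u_{n,t}^{(k)}$ in $L^2(0,T;L^2(\Om))$ via the Aubin--Lions lemma (thanks to the uniform $L^{q+1}(0,T;W^{1,q+1}(\Om))$-bound on $u_{n,t}^{(k)}$ and the $L^2(0,T;L^2(\Om))$-bound on $u_{n,tt}^{(k)}$ obtained in the higher-energy step below), one deduces pointwise a.e.\ convergence of $u_{n,t}^{(k)}$ up to a subsequence, hence the claimed convergence of $|u_{n,t}^{(k)}|^{q-1}u_{n,t}^{(k)}$ by Lebesgue's dominated convergence theorem combined with the uniform $L^{\frac{q+1}{q}}$ bound. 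Uniqueness is proved exactly as in step 3 of the proof of Proposition \ref{prop:W1lin}: testing the difference $\hat u=u^1-u^2$ with $\hat u_t$, one notes that besides the usual monotone $q$-Laplacian contribution \eqref{wsig_nonneg}, the extra term $\gamma\int_0^t\int_\Omega(|u^1_t|^{q-1}u^1_t-|u^2_t|^{q-1}u^2_t)\hat u_t\,dx\,ds$ is likewise nonnegative by \eqref{pLaplace_ineq}, so the argument closes.

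For part (ii), I would test \eqref{W1linweakdis} (with the added $\gamma$-term) by $w_n=u_{n,tt}^{(k)}(t)$ and combine, weighted by a small parameter $\mu>0$, with the lower order estimate. The boundary term is treated via \eqref{W1_boundary_gamma2} and the mixed term $\int f u_t u_{tt}$ by the modified bound displayed just before the proposition statement, which uses the embedding $L^{q+1}(\Om)\hookrightarrow L^4(\Om)$ (valid for $q\geq 3$) to trade the quadratic dependence on $u_t$ against the $\gamma$-term. The new term $\gamma\int|u_t|^{q-1}u_t\,u_{tt}$ is an exact time derivative $\frac{\gamma}{q+1}\frac{d}{dt}\int|u_t|^{q+1}$, which produces the $\|u_t\|^{q+1}_{L^\infty(0,T;L^{q+1}(\Om))}$ contribution on the left-hand side of \eqref{W1lin_est2_gamma}. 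Choosing $\tau,\sigma,\eta,\mu,\epsilon_0$ small enough so that all the coefficients on the left remain positive yields \eqref{W1lin_est2_gamma} uniformly in $n,k$; the resulting uniform bounds in $X$ then permit the same extraction-and-identification procedure as in step 2 of the proof of Proposition \ref{prop:W1lin}.
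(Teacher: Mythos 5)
Your overall route is the paper's: the proposition is proved by rerunning the Galerkin scheme of Proposition \ref{prop:W1lin} and replacing the energy estimates by exactly the computations displayed before the proposition statement --- testing with $u_t$, bounding the boundary term by \eqref{W1_boundary_gamma1} and the $(f-\frac12 a_t)(u_t)^2$ term by H\"older with exponents $\frac{q+1}{2}$ and $\frac{q+1}{q-1}$ to get \eqref{W1_gamma_est1}, then testing with $u_{tt}$, using \eqref{W1_boundary_gamma2} and the embedding $L^{q+1}(\Om)\hookrightarrow L^4(\Om)$ (hence $q\geq 3$) for the mixed term $\int f u_t u_{tt}$, and adding $\mu$ times the result to obtain \eqref{W1lin_est2_gamma}. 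On that main line your proposal is faithful and correct.

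Two of the steps you add, however, would fail as written. First, to identify the weak limit of $|u_{n,t}^{(k)}|^{q-1}u_{n,t}^{(k)}$ you invoke Aubin--Lions using the $L^2(0,T;L^2(\Om))$ bound on $u_{n,tt}^{(k)}$ ``obtained in the higher-energy step below''; that bound is available only under the additional hypotheses (ii), whereas the identification is already needed to produce the $\tilde X$-solution of part (i). Under (i) alone you must instead read off a bound on $u_{n,tt}^{(k)}$ in a negative-order space from the equation before applying Aubin--Lions, or use Minty's monotonicity trick for $s\mapsto|s|^{q-1}s$. Second, uniqueness does not follow ``exactly as in step 3'' of the proof of Proposition \ref{prop:W1lin}: that argument absorbs $\int_0^t\int_\Om(f-\tfrac12 a_t)\hat u_t^2\,dx\,ds$ by means of the smallness condition \eqref{W1_f_a} on $\|f-\frac12 a_t\|_{L^{\infty}(0,T;L^2(\Omega))}$, which is not among the hypotheses (i) here --- only $f,a_t\in L^{\frac{q+1}{q-1}}(0,T;L^{\frac{q+1}{q-1}}(\Om))$ with no smallness is assumed. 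The nonnegativity \eqref{pLaplace_ineq} of the extra $\gamma$-term, which you correctly note, does not dominate this term (it yields no coercive $\|\hat u_t\|_{L^{q+1}}^{q+1}$ contribution), so an additional argument is needed to close the uniqueness proof; the paper itself is silent on this point, so you should not present it as routine.
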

\begin{remark}
Due to the terms $\|f-\frac12 a_t\|_{L^{\frac{q+1}{q-1}}(0,T;L^{\frac{q+1}{q-1}}(\Om))}^{\frac{q+1}{q-1}}$ and $\|f\|_{L^{\frac{2(q+1)}{q-1}}(0,T;H^1(\Om))}^{\frac{2(q+1)}{q}}$ appearing on the right hand side in the estimate \eqref{W1lin_est2_gamma}, we will not be able to prove local well-posedness of the problem \eqref{W3_gamma} by employing this estimate. Instead, provided the assumptions (ii) in Proposition \ref{prop:W1lin} hold, we could proceed with the same estimates as in the proof of that proposition, and for evaluating boundary integrals apply \eqref{W1_boundary_gamma1} and \eqref{W1_boundary_gamma2}, to obtain the following energy estimate:
\begin{equation}  \label{W1lin_est2_gamma2}
\begin{split}
& \mu \frac{\underline{a}-\tau}{2} \|u_{tt}\|^2_{L^2(0,T;L^2(\Omega))}+\mu \Bigl(\frac{b(1-\delta)}{4}-\sigma\Bigr)\|\nabla u_{t}\|^2_{L^{\infty}(0,T;L^2(\Om))}  \\
&+\Bigl(\frac{\underline{a}}{4}-(C^{\Om}_{H^1,L^4})^2\hat{b}T- \mu \frac{1}{2\tau}(C^{\Om}_{H^1,L^4})^4\tilde{b}^2T\Bigr)\|u_{t}\|^2_{L^{\infty}(0,T;L^2(\Om))}  \\
&+\frac{c^2}{4}\Bigl(1-\mu \frac{c^2}{\sigma}\Bigr) \|\nabla u\|^2_{L^{\infty}(0,T;L^2(\Om))} +\mu \frac{\alpha}{4}\|u_{t}\|^2_{L^{\infty}(0,T;L^2(\hG))}  \\
& + \check{b}\|\nabla u_{t}\|^2_{L^2(0,T;L^2(\Omega))} +\Bigl(\frac{b\delta}{2}-\epsilon_0(\mu+1)\Bigr) \|\nabla u_{t}\|^{q+1}_{L^{q+1}(0,T;L^{q+1}(\Om))}  \\
& +(\frac{\gamma}{2}-\epsilon_0(\mu+1))\|u_t\|^{q+1}_{\LqT}+\frac{\alpha}{2}\|u_{t}\|^2_{L^2(0,T;L^2(\hG))} \\
&+\mu \Bigl(\frac{b\delta}{2(q+1)}-\eta \Bigr)\|\nabla u_{t}\|^{q+1}_{L^{\infty}(0,T;L^{q+1}(\Om))}\\
& +\mu\Bigl(\frac{\gamma}{2(q+1)}-\eta \Bigr)\|u_t\|^{q+1}_{L^{\infty}(0,T;L^{q+1}(\Om))}  \\
\leq&  \ \overline{C} \Bigl(\displaystyle \sum_{s=0}^1 \|\frac{d^s}{d t^s} g\|^{\frac{q+1}{q}}_{L^{\frac{q+1}{q}}(0,T,W^{-\frac{q}{q+1},\frac{q+1}{q}}(\Gamma))}+\|g\|^{\frac{q+1}{q}}_{L^{\infty}(0,T,W^{-\frac{q}{q+1},\frac{q+1}{q}}(\Gamma))} 
  \\
& + |u_{1}|^2_{L^2(\hG)}+ |u_{1}|^2_{H^1(\Om)}+|\nabla u_{0}|^2_{L^2(\Om)} 
+|u_{1}|^{q+1}_{W^{1,q+1}(\Om)}\Bigr), 
\end{split}
\end{equation}
with $\check{b}=\frac{b (1-\delta)}{2}-(C^{\Om}_{H^1,L^4})^2\hat{b}- \mu( \frac{1}{2\tau}(C^{\Om}_{H^1,L^4})^4\tilde{b}^2 + c^2)$, for some appropriately chosen constants $\tau, \sigma, \epsilon_0, \eta, \mu>0$ and large enough $\overline{C}$, independent of $T$.
\end{remark}
\noindent Relying on Proposition \ref{prop:W1lin}, we can now prove the local well-posedness for the boundary value problem \eqref{W1}.
\begin{theorem} \label{thm:W1}
Let $c^2$, $b>0$, $\alpha \geq 0$, $\delta \in (0,1)$, $k \in \mathbb{R}$, $q>d-1$, $q \geq 1$, $g \in L^{\infty }(0,T;W^{-\frac{q}{q+1},\frac{q+1}{q}}(\Gamma))$, $g_t \in L^{\frac{q+1}{q}}(0,T;H^{-\frac{q}{q+1},\frac{q+1}{q}}(\Gamma))$. 
For any $T>0$ there is a $\kappa_T>0$ such that for all 
$u_0, u_1 \in W^{1,q+1}(\Om)$, with
\begin{align} \label{W1_cond1}
&C_{\Gamma}(g)+|u_0|^2_{L^{1}(\Om)} 
 +|\nabla u_0|^2_{L^{q+1}(\Om)} + |u_{1}|^2_{H^1(\Om)}+|\nabla u_{0}|^2_{L^2(\Om)}+|u_{1}|^{q+1}_{W^{1,q+1}(\Om)}\nn \\
& + |u_{1}|^2_{L^2(\hG)} \leq \kappa_T^2
\end{align}
there exists a unique weak solution $u \in \cW$ of \eqref{W1}, where
\begin{equation}\label{defcW1}
\begin{split}
\cW =\{v\in X 
:& \ \|v_{tt}\|_{L^2(0,T;L^2(\Omega))}\leq \overline{m}\\
& \wedge \| v_t\|_{L^{\infty}(0,T;H^1(\Omega))}\leq \overline{m}\\
& \wedge \| \nabla v_t\|_{L^{q+1}(0,T;L^{q+1}(\Omega))}\leq \overline{M} \\
& \wedge (v,v_t)|_{t=0}=(u_0, u_1)
\}, 
\end{split}
\end{equation}
with
\begin{align} \label{W1_cond2}
2|k|C_{W^{1,\qq+1},L^{\infty}}^\Omega \Bigl[&\max \{1+C_P,C^{\Om}_1 \}\kappa_T+(1+C_P) T^{\frac{q}{q+1}}\overline{M} 
+C^{\Om}_2 T\overline{m})\Bigr] <1,
\end{align}
and $\overline{m}$ and $\overline{M}$ sufficiently small, where $C_{\Gamma}(g)$ is defined as in \eqref{Cg}.
\end{theorem}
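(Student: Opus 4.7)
The plan is to realise the nonlinear problem \eqref{W1} as a fixed-point equation for a map $\cT:\cW\to\cW$ defined by $u=\cT(v)$, where $u$ is the weak solution of the linearised problem \eqref{W1lin} with the choices
\begin{equation*}
a=1-2kv, \qquad f=-2kv_t.
\end{equation*}
A fixed point of $\cT$ then satisfies $(1-2ku)u_{tt}-c^2\Delta u-b\,\text{div}(\cdots)-2k(u_t)^2=0$, i.e.\ precisely \eqref{W1}. With these choices $a_t=-2kv_t$, so $f-\tfrac12 a_t=-kv_t$, and the key hypothesis $\|f-\tfrac12 a_t\|_{L^\infty(0,T;L^2(\Om))}\le\hat b$ of Proposition \ref{prop:W1lin} reduces to $|k|\,\|v_t\|_{L^\infty(0,T;L^2(\Om))}\le\hat b$, which holds as soon as $\overline m$ is chosen small enough relative to $k$ and $\hat b$ (and similarly for $|k|\overline m\le\tilde b$ needed by (ii)).

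The first step is to verify that $\cT$ is well defined and $\cT(\cW)\subseteq\cW$. The only non-trivial point for well-definedness is the non-degeneracy $0<\underline a\le 1-2kv\le\overline a$: inserting $v\in\cW$ into \eqref{W1Poincare} bounds $\|v\|_{L^\infty(0,T;L^\infty(\Om))}$ by
\begin{equation*}
C^{\Om}_{W^{1,q+1},L^{\infty}}\bigl[\max\{1+C_P,\,C^{\Om}_1\}\,\kappa_T+(1+C_P)\,T^{q/(q+1)}\,\overline M+C^{\Om}_2\,T\,\overline m\bigr],
\end{equation*}
so condition \eqref{W1_cond2} together with \eqref{W1_cond1} forces $2|k|\,\|v\|_{L^\infty(0,T;L^\infty(\Om))}<1$. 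The remaining hypotheses of Proposition \ref{prop:W1lin} are immediate from $v\in\cW$ and the data assumptions, whence $u=\cT(v)\in X$ exists and satisfies \eqref{est2}. Its right-hand side is bounded by $\overline C\,\kappa_T^2$ via \eqref{W1_cond1}, so after fixing $\overline m,\overline M$ small enough to guarantee the previous smallness constraints, shrinking $\kappa_T$ further forces the norms of $\cT(v)$ below $\overline m$ (for $\|u_{tt}\|_{L^2(L^2)}$ and $\|u_t\|_{L^\infty(H^1)}$) and $\overline M$ (for $\|\nabla u_t\|_{L^{q+1}(L^{q+1})}$), yielding $\cT(\cW)\subseteq\cW$.

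The decisive step, and the main obstacle, is the contraction of $\cT$ in a weaker topology in which $\cW$ is complete (the $\tilde X$-norm, since $\cW$ is closed and bounded in the reflexive higher-norm space $X$). For $v^1,v^2\in\cW$ and $u^i=\cT(v^i)$, the difference $\hat u=u^1-u^2$ (with $\hat v=v^1-v^2$) satisfies
\begin{align*}
& (1-2kv^1)\hat u_{tt}-c^2\Delta\hat u-b(1-\delta)\Delta\hat u_t\\
&\quad -b\delta\,\text{div}\bigl(|\nabla u^1_t|^{q-1}\nabla u^1_t-|\nabla u^2_t|^{q-1}\nabla u^2_t\bigr)-2kv^1_t\,\hat u_t=2k\,\hat v\,u^2_{tt}+2k\,\hat v_t\,u^2_t,
\end{align*}
with homogeneous initial data and the natural analogue of the boundary conditions in \eqref{W1_uniq}. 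Testing with $\hat u_t$, using \eqref{pLaplace_ineq} to discard the $q$-Laplace contribution, and estimating the forcing by H\"older together with the embeddings $H^1\hookrightarrow L^4$ and $W^{1,q+1}\hookrightarrow L^\infty$, the representation $\hat v(t)=\int_0^t\hat v_t(s)\,ds$, and the uniform bound $\|u^2_{tt}\|_{L^2(L^2)}\le\overline m$, followed by Gronwall, should yield
\begin{equation*}
\|\hat u_t\|^2_{L^\infty(L^2)}+\|\nabla\hat u\|^2_{L^\infty(L^2)}+\|\nabla\hat u_t\|^2_{L^2(L^2)}\le\theta\bigl(\|\hat v_t\|^2_{L^\infty(L^2)}+\|\nabla\hat v\|^2_{L^\infty(L^2)}+\|\nabla\hat v_t\|^2_{L^2(L^2)}\bigr),
\end{equation*}
with $\theta=\theta(\overline m,\overline M,\kappa_T,T)<1$ for the data small enough. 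The hard part is absorbing the coupling term $2k\,\hat v\,u^2_{tt}\,\hat u_t$, in which $u^2_{tt}$ is only $L^2(L^2)$: one pulls $\hat v$ out in $L^\infty(L^\infty)$ via \eqref{W1Poincare} applied to $\hat v=\int_0^{\,\cdot}\hat v_t$ so that the loss is absorbed by the smallness of $\kappa_T,\overline m,\overline M$. Banach's theorem then produces a unique $u\in\cW$ solving \eqref{W1}, and uniqueness within $\cW$ follows from the same difference estimate.
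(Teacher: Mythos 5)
Your proposal follows essentially the same route as the paper: the fixed-point operator $\cT v=u$ solving \eqref{W1lin} with $a=1-2kv$, $f=-2kv_t$, non-degeneracy of $a$ via \eqref{W1Poincare} under \eqref{W1_cond1}--\eqref{W1_cond2}, self-mapping from the higher energy estimate \eqref{est2} with $\kappa_T$ small, and contractivity by testing the difference equation with $\hat u_t$ and discarding the $q$-Laplace contribution via \eqref{pLaplace_ineq}. One small caveat on the term $2k\hat v\,u^2_{tt}\hat u_t$: extracting $\hat v$ in $L^\infty(0,T;L^\infty(\Om))$ through \eqref{W1Poincare} would bring in $\|\nabla \hat v_t\|_{L^{q+1}(0,T;L^{q+1}(\Om))}$, which is not controlled by the weak metric in which the contraction must close; the paper instead splits this term by H\"older with exponents $(4,2,4)$ and the embedding $H^1(\Om)\hookrightarrow L^4(\Om)$, so that only $\|\hat v\|_{L^{\infty}(0,T;H^1(\Om))}\lesssim \sqrt{T}\,(\|\hat v_t\|_{L^2(0,T;L^2(\Om))}+\|\nabla\hat v_t\|_{L^2(0,T;L^2(\Om))})$ is needed, which does lie in the contraction norm $|||\cdot|||$.
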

\begin{proof}
\noindent We will carry out the proof by using a fixed point argument. We define an operator $\cT :\cW \to X$, $v\mapsto \cT v=u$, where $u$ solves \eqref{W1linweak} with
\begin{align}
a=1-2kv, \ f=-2kv_t.
\end{align}
We will show that assumptions of Proposition \ref{prop:W1lin} are satisfied. Since $v \in \cW$, and $q>d-1$ so we can make use of the embedding $W^{1,q+1}(\Omega) \hookrightarrow L^{\infty}(\Omega)$,  we have by \eqref{W1Poincare}
\begin{align*}
|2kv(x,t)|\leq 2|k|C_{W^{1,\qq+1},L^{\infty}}^\Omega \Bigl[&\max \{1+C_P,C^{\Om}_1,\}\kappa_T+(1+C_P) T^{\frac{q}{q+1}}\overline{M} \\
&  +C^{\Om}_2 T\overline{m}\Bigr],
\end{align*}
and $a_t=-2kv_t \in L^{\infty}(0,T;L^2(\Om)$. \\
It follows that $0 <\underline{a}=1-a_0 < a < \overline{a}=1+a_0$, where
\begin{align*}
a_0=2|k|C_{W^{1,\qq+1},L^{\infty}}^\Omega \Bigl[&\max \{1+C_P,C^{\Om}_1\}\kappa_T+(1+C_P) T^{\frac{q}{q+1}}\overline{M}
+C^{\Om}_2 T\overline{m}\Bigr]. 
\end{align*}
Furthermore,
\begin{align*}
&\|f-\frac{1}{2}a_t\|_{L^{\infty}(0,T;L^2(\Om))}=\|kv_t\|_{L^{\infty}(0,T;L^2(\Om))} \leq |k| \overline{m}, \\
& \|f\|_{L^{\infty}(0,T;H^1(\Om))}=2|k|\|v_t\|_{L^{\infty}(0,T;H^1(\Om))} \leq 2|k|\overline{m}.
\end{align*}
Hence the higher order energy estimate \eqref{est2} holds and by choosing $\overline{m}$, $\overline{M}>0$ such that
\begin{align*}
& 2|k|C^{\Om}_{W^{1,q+1},L^{\infty}}((1+C_P) T^{\frac{q}{q+1}}\overline{M}+C^{\Om}_2 T\overline{m}) <1, \nonumber \\
& \overline{m} < \frac{1}{|k|}\min \Bigl \{\frac{b(1-\delta)}{2(C_{H^1,L^4}^\Omega)^2}, \frac{\underline{a}}{4T(C_{H^1,L^4}^\Omega)^2} \Bigr \},
\end{align*}
and making the bound $\kappa_T$ on initial and boundary data small enough
\begin{align*}
  \kappa_T & < \frac{1}{\max\{1+C_P,C^{\Om}_1\}}\Bigl(\frac{1}{2|k|C^{\Om}_{W^{1,q+1},L^{\infty}}}-(1+C_P) T^{\frac{q}{q+1}}\overline{M}
-C^{\Om}_2 T\overline{m}\Bigr),\\
 \kappa_T^2 &\leq \frac{1}{\overline{C}} \text{min} \Bigl\{\Bigl(\frac{\underline{a}}{4}-(C^{\Om}_{H^1,L^4})^2\hat{b}T-\epsilon_0(\mu+1)-\mu \frac{1}{2\tau}(C^{\Om}_{H^1,L^4})^4\tilde{b}^2T\Bigr)\overline{m}^2, \\
&\quad \quad \quad  \quad \quad \mu\frac{\underline{a}-\tau}{2}\overline{m}^2, \mu \Bigl(\frac{b(1-\delta)}{4}-\sigma\Bigr)\overline{m}^2,  
  \Bigl(\frac{b\delta}{2}-\epsilon_1(\mu+1)\Bigr)\overline{M}^{q+1} \Bigr\},
\end{align*}
we achieve that $u \in \cW$, with constants $\epsilon_0$, $\epsilon_1$, $\tau, \eta, \sigma, \mu$ chosen as in \eqref{epsilon_choice} and \eqref{W1constants} and $\overline{C}$ as in \eqref{est2}. \\
\noindent In order to prove contractivity, consider $v^i \in \cW$, $u^i=\cT v^i \in \cW$, $i=1,2$ and denote $\hat{u}=u^1-u^2, \hat{v}=v^1-v^2$. Subtracting the equation \eqref{W1lin} for $u^1$ and $u^2$ yields:
\begin{eqnarray} \label{contractivity}
\begin{cases}
(1-2kv^1)\hat{u}_{tt} -c^2 \Delta \hat{u}-b(1-\delta) \Delta \hat{u}_t -b\delta \, \text{div}\Bigl(|\nabla u^1_t|^{\qq-1}\nabla u^1_t-|\nabla u^2_t|^{\qq-1}\nabla u^2_t\Bigr), \vspace{2mm}\\
 =2k(\hat{v}u^2_{tt}+v_t^1\hat{u_t}+\hat{v}_tu_t^2) \  \ \text{in} \ \Om,
\vspace{2mm} \\
c^2 \frac{\partial \hat{u}}{\partial n}+b(1-\delta)\frac{\partial \hat{u}_t}{\partial n}+b\delta(|\nabla u^1_t|^{q-1}\frac{\partial u^1_t}{\partial n}-|\nabla u^2_t|^{q-1}\frac{\partial u^2_t}{\partial n})=0  \ \ \text{on} \ \Gamma,
\vspace{2mm} \\
\alpha \hat{u}_t+c^2 \frac{\partial \hat{u}}{\partial n}+b(1-\delta)\frac{\partial \hat{u}_t}{\partial n}+b\delta(|\nabla u^1_t|^{q-1}\frac{\partial u^1_t}{\partial n}-|\nabla u^2_t|^{q-1}\frac{\partial u^2_t}{\partial n})=0  \ \ \text{on} \ \hat{\Gamma},
\vspace{2mm}\\
(\hat{u},\hat{u}_t)|_{t=0}=(0, 0).
\end{cases}
\end{eqnarray}
\noindent After testing \eqref{contractivity} with $\hat{u}_t$ and making use of the inequality \eqref{wsig_nonneg}, %
we obtain 
\begin{align*}
& \frac{1}{2}\Bigl[\int_{\Omega} (1-2kv^1)(\hat{u}_t)^2 \, dx+c^2 |\nabla \hat{u}|^2_{L^2(\Omega)}\Bigr]_0^t+b(1-\delta)\int_0^t |\nabla \hat{u_t}|^2_{L^2(\Omega)} \, ds \nn \\
&+\alpha \int_0^t \int_{\hG} |\hat{u}_t|^2 \, dx \, ds\\
\leq& \ 2|k| \int_0 ^t \int_{\Omega} (\frac{1}{2}v_t^1(\hat{u}_t)^2+\hat{v}u_{tt}^2\hat{u}_t+\hat{v}_tu_t^2\hat{u}_t) \, dx \, ds,
\end{align*}
and therefore we have
\begin{align*}
& \frac{1}{2}\Bigl[\int_{\Omega} (1-2kv^1)(\hat{u}_t)^2 \, dx+c^2 |\nabla \hat{u}|^2_{L^2(\Omega)}\Bigr]_0^t+b(1-\delta)\int_0^t |\nabla \hat{u_t}|^2_{L^2(\Omega)} \, ds \nn \\
&+\alpha \int_0^t \int_{\hG} |\hat{u}_t|^2 \, dx \, ds\\
\leq& \ |k|(C^{\Omega}_{H^1,L^4})^2 \Bigl(\|v_t^1\|_{L^{\infty}(0,T;L^2(\Omega))} \int_0^t | \hat{u}_t|^2_{H^1(\Omega)} \, ds \\
&+ \|u^2_{tt}\|_{L^2(0,T;L^2(\Omega))}[\| \hat{v}\|^2_{L^{\infty}(0,T;H^1(\Omega))}+\int_0^t |\hat{u}_t|^2_{H^1(\Omega)} \, ds] \\
&+\|u_t^2\|_{L^{\infty}(0,T;L^2(\Omega))}[\| \hat{v}_t\|^2_{L^2(0,T;H^1(\Omega))}+\int_0^t | \hat{u}_t|^2_{H^1(\Omega)} \, ds]\Bigr).
\end{align*}
Utilizing the fact that $v^1, v^2, u^1, u^2 \in \cW$ and the inequalities $\|\nabla \hat{v}\|^2_{L^{\infty}(0,T,L^2(\Omega))} \leq T \|\nabla \hat{v}_t\|^2_{L^2(0,T,L^2(\Omega))}$, $\|\hat{v}\|^2_{L^{\infty}(0,T,L^2(\Omega))} \leq T \|\hat{v}_t\|^2_{L^2(0,T,L^2(\Omega))}$ and $\|\hat{v}_t\|^2_{L^2(0,T;L^2(\Om))} \leq T \|\hat{v}_t\|^2_{L^{\infty}(0,T;L^2(\Om))}$ leads to 
\begin{align*}
& \frac{1-a_0}{4}\|\hat{u}_t\|^2_{L^{\infty}(0,T;L^2(\Om))}+\frac{c^2}{4}\|\nabla \hat{u}\|^2_{L^{\infty}(0,T;L^2(\Om))}+\frac{b(1-\delta)}{2}\|\nabla \hat{u}_t\|^2_{L^2(0,T;L^2(\Omega))}\\
\leq& \ |k|(C^{\Omega}_{H^1,L^4})^2 \overline{m}\Bigl(3(T\|\hat{u}_t\|^2_{L^{\infty}(0,T;L^2(\Om))}+\|\nabla \hat{u}_t\|^2_{L^2(0,T;L^2(\Om))}) 
+ T^2\| \hat{v}_t\|^2_{L^{\infty}(0,T;L^2(\Omega))} \\
&+T\|\nabla \hat{v}_t\|^2_{L^2(0,T;L^2(\Omega))}
+T\| \hat{v}_t\|^2_{L^{\infty}(0,T;L^2(\Omega))}+\| \nabla \hat{v}_t\|^2_{L^2(0,T;L^2(\Omega))}\Bigr).
\end{align*}
It follows that
\begin{align*}
& \Bigl( \frac{1-a_0}{4}-3T|k|(C^{\Omega}_{H^1,L^4})^2 \overline{m}\Bigr)\|\hat{u}_t\|^2_{L^{\infty}(0,T;L^2(\Om))}+\frac{c^2}{4}\|\nabla \hat{u}\|^2_{L^{\infty}(0,T;L^2(\Om))} \\
&+\Bigl(\frac{b(1-\delta)}{2}-3|k|(C^{\Omega}_{H^1,L^4})^2 \overline{m} \Bigr) \|\nabla \hat{u}_t\|^2_{L^2(0,T;L^2(\Omega))}\\
\leq& \ |k|(C^{\Omega}_{H^1,L^4})^2 \overline{m}(T+1)\max\{1,T\}\Bigl(\|\hat{v}_t\|^2_{L^{\infty}(0,T;L^2(\Om))}+ \|\nabla \hat{v}_t\|^2_{L^2(0,T;L^2(\Omega))})
\end{align*}
and altogether we have 
\begin{equation} \label{contractivity_ab} 
\begin{split}
 &\text{min} \{\frac{1-a_0}{4}-3T|k|(C^{\Omega}_{H^1,L^4})^2 \overline{m},\frac{b(1-\delta)}{2}-3|k|(C^{\Omega}_{H^1,L^4})^2 \overline{m}, \frac{c^2}{4}\} \,|||u|||^2  \\
\leq& \ |k|(C^{\Omega}_{H^1,L^4})^2 \overline{m}(T+1)\max\{1,T\} \,|||v|||^2, 
\end{split}
\end{equation}
where $|||u|||^2=\|\hat{u}_t\|^2_{L^{\infty}(0,T;L^2(\Om))}+ \|\nabla \hat{u}_t\|^2_{L^2(0,T;L^2(\Omega))}+\|\nabla \hat{u}\|^2_{L^{\infty}(0,T;L^2(\Om))}
$. We conclude from \eqref{contractivity_ab} that $\cT$ is a contraction with respect to the norm $|||\cdot|||$, provided that $\overline{m}$ is sufficiently small. This, together with the self-mapping property and  $\cW$ being closed, provides existence and uniqueness of a solution.
\end{proof}
Relying on Proposition \ref{prop:W1lin_beta} we can obtain local well-posedness for the problem \eqref{W1_beta} with $\beta>0$. Since we need to avoid degeneracy of the term $1-2ku$ and therefore make use of the estimate \eqref{W1Poincare} to get the condition \eqref{W1_cond2}, we cannot completely avoid restriction on final time in the fully nonlinear equation. Inspecting the proof of Theorem \ref{thm:W1} immediately yields:
\begin{theorem} \label{thm:W1_beta}
Let $\beta>0$ and the assumptions of Theorem \ref{thm:W1} hold.
For any $T>0$ there is a $\kappa_T>0$ such that for all 
$u_0, u_1 \in W^{1,q+1}(\Om)$, with \eqref{W1_cond1}, there exists a unique weak solution $u \in \cW$ of \eqref{W1}, where $\cW$ is defined as in \eqref{defcW1}, with \eqref{W1_cond2} and $\overline{m}$ and $\overline{M}$ sufficiently small.
\end{theorem}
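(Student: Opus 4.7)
The plan is to rerun the fixed-point argument from the proof of Theorem \ref{thm:W1}, but with the linearized problem now being \eqref{W1lin_beta} (which contains the lower order $\beta u_t$ term) rather than \eqref{W1lin}. Define an operator $\cT:\cW\to X$ by $v\mapsto \cT v=u$, where $u$ is the unique weak solution of \eqref{W1lin_beta} with the coefficient and first-order source chosen as $a=1-2kv$ and $f=-2kv_t$. Observe that with this choice, $(1-2kv)u_{tt}-c^2\Delta u - b\,\text{div}(\cdots)+\beta u_t -2kv_t\,u_t=0$, so a fixed point $u=\cT u$ recovers the original nonlinear equation in \eqref{W1_beta}, using the identity $2kuu_{tt}+2k(u_t)^2=(2ku\,u_t)_t$ only implicitly via the weak form.

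The first step is to verify that $a$ and $f$ fulfill the hypotheses of Proposition \ref{prop:W1lin_beta}. For the nondegeneracy bound $0<\underline{a}\leq a\leq\overline{a}$, invoke \eqref{W1Poincare} together with the membership $v\in\cW$ exactly as in the proof of Theorem \ref{thm:W1}; this produces precisely the constraint \eqref{W1_cond2}, and consequently $a_0$ and hence $\underline{a}$, $\overline{a}$ as before. The required bound on $f-\tfrac12 a_t=-kv_t$ reduces to $|k|\,\overline{m}$ in the $L^{\infty}(0,T;L^2(\Om))$ norm, so choosing $\overline{m}$ small with
\[
|k|\,\overline{m}<\min\!\Bigl\{\tfrac{\beta}{2(\Chl)^2},\tfrac{b(1-\delta)}{2(\Chl)^2}\Bigr\}
\]
secures the smallness assumption in Proposition \ref{prop:W1lin_beta}. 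The higher regularity hypothesis $f\in L^{\infty}(0,T;H^1(\Om))$ follows from $v_t\in L^{\infty}(0,T;H^1(\Om))$, again controlled by $\overline{m}$.

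For the self-mapping property, apply the $T$-independent higher order estimate \eqref{W1lin_est2_beta}. Its right-hand side is a constant multiple of $C_{\Gamma}(g)$ plus initial data norms, all bounded by $\kappa_T^2$ via \eqref{W1_cond1}. Picking $\kappa_T$ small in the same manner as in the proof of Theorem \ref{thm:W1} — one inequality for each of $\|u_{tt}\|_{L^2(L^2)}$, $\|u_t\|_{L^\infty(H^1)}$ and $\|\nabla u_t\|_{L^{q+1}(L^{q+1})}$ — yields $u\in\cW$. Note that, although \eqref{W1lin_est2_beta} itself is independent of $T$, the bound \eqref{W1_cond2} still contains the factors $T^{q/(q+1)}$ and $T$; this is the reason the statement must still quantify over each fixed $T>0$ rather than deliver a global-in-time conclusion, and it is the essence of the remark in the paper that the final-time restriction cannot be fully removed.

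Contractivity is proven exactly as in Theorem \ref{thm:W1}: subtract the equations for $u^i=\cT v^i$, test with $\hat{u}_t=u^1_t-u^2_t$, dispose of the $q$-Laplace cross-term by means of \eqref{pLaplace_ineq}, and exploit $v^i,u^i\in\cW$ to arrive at
\[
\min\!\Bigl\{\tfrac{1-a_0}{4}-3T|k|(\Chl)^2\overline{m},\;\tfrac{b(1-\delta)}{2}-3|k|(\Chl)^2\overline{m},\;\tfrac{c^2}{4}\Bigr\}\,|||\hat u|||^2\leq |k|(\Chl)^2\overline{m}(T+1)\max\{1,T\}\,|||\hat v|||^2,
\]
the new $\beta$-term contributing a nonnegative quantity on the left that does not hurt the estimate. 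For $\overline{m}$ sufficiently small one obtains strict contractivity in $|||\cdot|||$, and Banach's fixed point theorem delivers the unique solution in $\cW$. I expect no genuinely new obstacle — the main subtlety is simply recognizing that the $\beta$-term makes the energy estimate \eqref{W1lin_est2_beta} time-uniform while leaving the degeneracy-avoidance condition \eqref{W1_cond2} time-dependent, so the statement correctly retains the phrase \emph{for any $T>0$ there is a $\kappa_T>0$}.
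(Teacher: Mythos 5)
Your proposal is correct and follows essentially the same route as the paper, which simply states that the result follows by inspecting the proof of Theorem \ref{thm:W1} with Proposition \ref{prop:W1lin} replaced by Proposition \ref{prop:W1lin_beta}; you have merely written out the details (hypothesis verification, self-mapping via \eqref{W1lin_est2_beta}, unchanged contractivity with the nonnegative $\beta$-contribution) that the paper leaves implicit. Your observation that the residual $T$-dependence comes from \eqref{W1Poincare} through \eqref{W1_cond2} matches the paper's own remark exactly.
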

For obtaining well-posedness for the problem \eqref{W1_gamma} with $\gamma>0$, we cannot rely on estimates in Proposition \ref{prop:W1lin_gamma} to prove self-mapping of the fixed-point operator $\cT$, instead we make use of \eqref{W1lin_est2_gamma2}; therefore restrictions on final time persist in the nonlinear equation. Analogously to Theorem \ref{thm:W1} we obtain:
\begin{theorem} \label{thm:W1_gamma}
Let $\gamma>0$ and the assumptions of Theorem \ref{thm:W1} hold.
For any $T>0$ there is a $\kappa_T>0$ such that for all 
$u_0, u_1 \in W^{1,q+1}(\Om)$, with 
\begin{align} \label{W1_cond1_gamma}
&\displaystyle \sum_{s=0}^{1}\|\frac{d^s}{d t^s} g\|^{\frac{q+1}{q}}_{L^{\frac{q+1}{q}}(0,T;W^{-\frac{q}{q+1},\frac{q+1}{q}}(\Gamma))}  +\|g\|^{\frac{q+1}{q}}_{L^{\infty}(0,T;W^{-\frac{q}{q+1},\frac{q+1}{q}}(\Gamma))} 
 +|u_0|^2_{L^{1}(\Om)}  \nn \\
&+|\nabla u_0|^2_{L^{q+1}(\Om)}+ |u_{1}|^2_{H^1(\Om)}+|\nabla u_{0}|^2_{L^2(\Om)}+|u_{1}|^{q+1}_{W^{1,q+1}(\Om)} + |u_{1}|^2_{L^2(\hG)} \leq \kappa_T^2,
\end{align} there exists a unique weak solution $u \in \cW$ of \eqref{W1}, where $\cW$ is defined as in \eqref{defcW1}, with \eqref{W1_cond2}, and $\overline{m}$ and $\overline{M}$ sufficiently small.
\end{theorem}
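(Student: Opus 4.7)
The plan is to mimic the fixed-point scheme of Theorem~\ref{thm:W1}. Define the operator $\cT:\cW\to X$, $v\mapsto \cT v = u$, where $u$ solves the linearized problem \eqref{W1lin_gamma} with the data
\begin{equation*}
a=1-2kv,\qquad f=-2kv_t,\qquad \gamma>0 \text{ as prescribed}.
\end{equation*}
Crucially, I would \emph{not} invoke Proposition~\ref{prop:W1lin_gamma} directly: its right-hand side \eqref{W1lin_est2_gamma} involves $\|f-\tfrac12 a_t\|_{L^{(q+1)/(q-1)}}^{(q+1)/(q-1)}$ and $\|f\|_{L^{2(q+1)/(q-1)}(H^1)}^{2(q+1)/(q-1)}$, norms of $v_t$ which cannot be absorbed into the smallness constant $\overline{m}$ controlling $\|v_t\|_{L^\infty(H^1)}$ in $\cW$. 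Instead, I would invoke the Remark, which under the assumptions (i) and (ii) of Proposition~\ref{prop:W1lin} delivers the alternative estimate \eqref{W1lin_est2_gamma2}; its right-hand side has precisely the same shape $\overline{C}\,(C_\Gamma(g)+\text{data})$ as \eqref{est2}, only with the coefficient $C_\Gamma(g)$ of \eqref{Cg} replaced by the somewhat weaker combination appearing in \eqref{W1_cond1_gamma}.

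To verify self-mapping $\cT(\cW)\subseteq \cW$, I would reproduce the data verification from the proof of Theorem~\ref{thm:W1}: by the $L^\infty$-bound \eqref{W1Poincare} combined with \eqref{W1_cond2}, the coefficient $a=1-2kv$ stays in $[\underline a,\overline a]$ with $\underline a=1-a_0>0$; moreover
\begin{equation*}
\|f-\tfrac12 a_t\|_{L^\infty(0,T;L^2(\Om))}=|k|\|v_t\|_{L^\infty(0,T;L^2(\Om))}\leq |k|\overline m,\qquad \|f\|_{L^\infty(0,T;H^1(\Om))}\leq 2|k|\overline m,
\end{equation*}
so the hypotheses needed for \eqref{W1lin_est2_gamma2} are met once $\overline m$ is small enough (relative to $\hat b$, $\tilde b$, the $\tau$-constraint, and the $T$-threshold hidden in $(C_{H^1,L^4}^\Omega)^2 \hat b T$). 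Then the four bounds defining $\cW$ follow by matching each left-hand side term of \eqref{W1lin_est2_gamma2} against $\overline m^2$ or $\overline M^{q+1}$ and choosing $\kappa_T$ small enough, exactly as in Theorem~\ref{thm:W1}.

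For contractivity, set $\hat u=u^1-u^2$, $\hat v=v^1-v^2$ and subtract the two copies of \eqref{W1lin_gamma}. This produces the same system \eqref{contractivity} as in Theorem~\ref{thm:W1} except for the additional term $\gamma(|u_t^1|^{q-1}u_t^1-|u_t^2|^{q-1}u_t^2)$ on the left. Testing with $\hat u_t$ in the weak formulation, both the strong nonlinear damping term and the new $\gamma$-term have non-negative contributions: the former by the vector monotonicity \eqref{pLaplace_ineq}/\eqref{wsig_nonneg} and the latter by the scalar version of the same inequality,
\begin{equation*}
\int_0^t\!\int_\Omega \gamma\bigl(|u_t^1|^{q-1}u_t^1-|u_t^2|^{q-1}u_t^2\bigr)\hat u_t\,dx\,ds\geq 0.
\end{equation*}
Hence both monotone terms may simply be dropped, and the remainder of the contractivity computation (estimates on $2k(\hat v u_{tt}^2 + v_t^1\hat u_t + \hat v_t u_t^2)\hat u_t$, use of $v_t^1, u_{tt}^2, u_t^2$ bounded in $\cW$, and the standard temporal estimates $\|\hat v\|^2_{L^\infty(L^2)}\leq T\|\hat v_t\|^2_{L^2(L^2)}$ etc.) is \emph{identical} to the one in Theorem~\ref{thm:W1}, yielding an inequality of the shape \eqref{contractivity_ab} with the same $|||\cdot|||$. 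Contraction therefore follows for $\overline m$ sufficiently small.

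The main obstacle—already flagged in the paragraph preceding the statement—is that \eqref{W1lin_est2_gamma2} retains explicit $T$-dependence through the factors $(C^\Omega_{H^1,L^4})^2 \hat b T$ and $\mu\tfrac{1}{2\tau}(C^\Omega_{H^1,L^4})^4\tilde b^2 T$ multiplying $\|u_t\|^2_{L^\infty(L^2)}$. Unlike the $\beta$-case (Theorem~\ref{thm:W1_beta}), the lower-order nonlinear damping $\gamma|u_t|^{q-1}u_t$ does not produce a genuine $L^2(0,T;L^2(\Omega))$ term on $u_t$ that could absorb this indefinite-in-$T$ contribution; consequently the smallness threshold $\kappa_T$ must be chosen depending on $T$, and the bookkeeping of the constants $\epsilon_0,\epsilon_1,\mu,\sigma,\tau,\eta$ from \eqref{W1lin_est2_gamma2} has to be done carefully so that every coefficient on the left of \eqref{W1lin_est2_gamma2} remains strictly positive and the final smallness conditions on $\overline m,\overline M,\kappa_T$ are simultaneously satisfiable.
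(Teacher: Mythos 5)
Your proposal is correct and follows essentially the same route the paper takes: the paper explicitly notes that Proposition \ref{prop:W1lin_gamma} cannot be used for the self-mapping step and that one must instead use the estimate \eqref{W1lin_est2_gamma2} from the Remark, after which the argument proceeds "analogously to Theorem \ref{thm:W1}" — exactly your fixed-point scheme with $a=1-2kv$, $f=-2kv_t$, self-mapping from \eqref{W1lin_est2_gamma2}, and contractivity obtained by dropping both monotone damping terms via \eqref{pLaplace_ineq}. Your closing observation that the $T$-dependence persists also matches the paper's own remark preceding the theorem.
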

\section{Acoustic-acoustic coupling} \label{section_W1coupling}
\noindent We will now consider the problem of an acoustic-acoustic coupling which can be modeled by the equation with coefficients varying in space \eqref{W1_coupled}. We will make the following assumptions on the coefficients in \eqref{W1_coupled}:
\begin{equation}\label{condcoeffacoust}
\begin{cases}
\lambda, \varrho, b, \delta, k \in L^{\infty}(\Om),  \\
\exists \ul{\lambda}, \ol{\lambda}, \ul{\varrho}, \ol{\varrho}: \ 
0<\ul{\lambda}\leq \lambda(x) \leq \ol{\lambda}\,, \ 0<\ul{\varrho} \leq \varrho(x) \leq \ol{\varrho}, \
\mbox{ in }\Omega,\\
\exists \ul{b}, \ol{b}, \ul{\delta}, \ol{\delta}: \ 
0<\ul{b}\leq b(x) \leq \ol{b} \,, \ 0<\ul{\delta}\leq\delta(x) \leq \ol{\delta}<1 \
\mbox{ in }\Omega .
\end{cases}
\end{equation}
\noindent We can again first inspect the problem with nonlinearity present only in the damping term:
\begin{equation}\label{W1lin_coupled}
\begin{cases}
a u_{tt}-\text{div}(\frac{1}{\varrho(x)} \nabla u)-\text{div}\Bigl(b(x)( 
((1-\delta(x)) +\delta(x)|\nabla u_t|^{q-1})\nabla u_t\Bigr)\nn \\
+fu_t=0 \,  \text{ in } \Omega \times (0,T],
\vspace{2mm} \\
\frac{1}{\varrho(x)}\frac{\partial u}{\partial n}+b(x)((1-\delta(x))+\delta(x)|\nabla u_t|^{q-1})\frac{\partial u_t}{\partial n}=g  \ \ \text{on} \ \Gamma \times (0,T],
\vspace{2mm} \\
\alpha (x) u_t+\frac{1}{\varrho(x)} \frac{\partial u}{\partial n}+b(x)((1-\delta(x))+\delta(x)|\nabla u_t|^{q-1})\frac{\partial u_t}{\partial n}=0  \ \ \text{on} \ \hat{\Gamma} \times (0,T],
\vspace{2mm} \\
(u,u_t)=(u_0, u_1) \ \ \text{on} \ \overline{\Om}\times \{t=0\}
.\\
\end{cases}
\end{equation}
\noindent Analogously to Propositon \ref{prop:W1lin} and Theorem \ref{thm:W1} we obtain
\begin{corollary} Let the assumptions \eqref{condcoeffacoust} and the assumptions (i) in Propositon \ref{prop:W1lin} be satisfied,
with 
$$\|f-\frac12 a_t\|_{L^{\infty}(0,T;L^2(\Omega))}\leq\hat{b}
< \min \Bigl \{\frac{\ul{b}(1-\ol{\delta})}{2(C_{H^1,L^4}^\Omega)^2}, \frac{\underline{a}}{4T(C_{H^1,L^4}^\Omega)^2} \Bigr \}\,. $$
\noindent Then \eqref{W1lin_coupled} has a weak solution $u\in \tilde{X}$, with $\tilde{X}$ defined as in \eqref{W1_Xtilde},
which satisfies the energy estimate
\begin{align*}
&\Bigl[\frac{\underline{a}}{4}-\hat{b}(\Chl)^2T-\epsilon_0 \Bigr]\|u_t\|^2_{\LiT}
+ \frac{\underline{c}^2}{4} \|\nabla u\|_{\LiT}^2 \nn \\
&+ \Bigl[\frac{\underline{b}(1-\overline{\delta})}{2}-\hat{b}(\Chl)^2 \Bigr]\|\nabla u_t\|^2_{\LT} \nn \\
&+\Bigl[\frac{\underline{b}\underline{\delta}}{2}-\epsilon_1 \Bigr]\|\nabla u_t\|^{q+1}_{\LqT}+\frac{\underline{\alpha}}{2}\| u_{n,t}^{(k)}\|^2_{L^2(0,T;L^2(\hG))} \nn \\
\leq& \ \frac{\overline{a}}{2}|u_1|^2_{L^2(\Om)}+\frac{\overline{c}^2}{2}|\nabla u_0|^2_{L^2(\Om)}+\frac{1}{4\epsilon_0}(C^{tr}_1 C^{\Om}_2)^2\|g\|^2_{L^{1}(0,T;W^{-\frac{q}{q+1},\frac{q+1}{q}}(\Gamma))}\nn \\ &+C(\epsilon_1,q+1)(C^{tr}_1(1+C_P))^{\frac{q+1}{q}}\|g\|^{\frac{q+1}{q}}_{L^{\frac{q+1}{q}}(0,T;W^{-\frac{q}{q+1},\frac{q+1}{q}}(\Gamma))} ,
\end{align*}
for some sufficiently small constants $\epsilon_0$, $\epsilon_1 >0$. \\
\noindent If, additionally, assumptions (ii) of Proposition \ref{prop:W1lin} hold, then $u\in X$, where $X$ is defined as in \eqref{W1_X}, and satisfies the energy estimate \eqref{est2}, where $\alpha$ is replaced with $\underline{\alpha}$, $b(1-\delta)$ with $\underline{b}(1-\overline{\delta})$, $b\delta$ with $\underline{b}\, \underline{\delta}$ and $\frac{c^2}{2}(1-\mu \frac{c^2}{\sigma})$ with $\frac{\underline{c}^2}{4}-\mu \frac{\overline{c}^4}{4\sigma}$,
for some small enough constants $\tau, \eta, \sigma, \mu>0$
and some large enough $\overline{C}>0$, independent of $T$.
\end{corollary}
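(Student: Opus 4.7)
The plan is to mimic the proof of Proposition \ref{prop:W1lin} and the fixed-point argument of Theorem \ref{thm:W1} verbatim, with the constant coefficients $c^2$, $b$, $\delta$, $\alpha$ replaced pointwise by $1/\varrho(x)$, $b(x)$, $\delta(x)$, $\alpha(x)$, and to propagate the $L^{\infty}$-bounds \eqref{condcoeffacoust} through every estimate. First, introduce the smooth approximations $a_k$, $f_k$, $g_k$ with exactly the properties listed in step 1 of the proof of Proposition \ref{prop:W1lin}, construct the Galerkin subspaces $V_n\subseteq L^2_{\tilde a_k}(\Om)\cap W^{1,q+1}(\Om)$, and solve the discrete ODE systems. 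Since the spatial coefficients enter only as $L^\infty(\Om)$-multipliers in the weak form, the ODE existence theory is unaffected.

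For the lower-order estimate, test the discrete equation with $u^{(k)}_{n,t}$. The only structural change from \eqref{W1lin_1est} is that $c^2\int_\Om \nabla u\cdot\nabla u_t\,dx$ is replaced by $\int_\Om \tfrac{1}{\varrho(x)}\nabla u\cdot \nabla u_t\, dx=\tfrac12 \tfrac{d}{dt}\int_\Om\tfrac{1}{\varrho(x)}|\nabla u|^2\,dx$, and analogous replacements occur in the $b(1-\delta)$, $b\delta$ and $\alpha$ terms. Bounding the left-hand side from below by $\underline{c}^2:=1/\ol{\varrho}$, $\underline{b}(1-\overline{\delta})$, $\underline{b}\,\underline{\delta}$ and $\underline{\alpha}$, and the initial-data contribution from above by $\overline{c}^2:=1/\underline{\varrho}$ yields the stated lower energy estimate. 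The boundary-integral bound \eqref{W1_inequality1} is unchanged because only the trace constants and $C_P$ enter there.

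For the higher-order estimate, test with $u^{(k)}_{n,tt}$ as in \eqref{W1_higher_}. The integration by parts $c^2\int\nabla u\cdot \nabla u_{tt}\, dx=\tfrac{d}{dt}c^2\int\nabla u\cdot \nabla u_t\,dx-c^2\int|\nabla u_t|^2\,dx$ becomes $\tfrac{d}{dt}\int\tfrac{1}{\varrho}\nabla u\cdot \nabla u_t\,dx-\int\tfrac{1}{\varrho}|\nabla u_t|^2\,dx$. Splitting the cross term by Young's inequality with parameter $\sigma$ now produces the factor $\overline{c}^4/(4\sigma)$ in place of $c^4/(4\sigma)$, and combining with the lower bound $\underline{c}^2$ on $\|\nabla u\|^2_{L^\infty(0,T;L^2(\Om))}$ gives the coefficient $\tfrac{\underline{c}^2}{4}-\mu\tfrac{\overline{c}^4}{4\sigma}$ stated in the corollary. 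All remaining left-hand-side coefficients are obtained by the same calculation as in \eqref{W1lower}, with $b(1-\delta)\to \underline{b}(1-\ol{\delta})$, $b\delta\to \underline{b}\,\underline{\delta}$, $\alpha\to \underline{\alpha}$; the right-hand side picks up only the $\overline{c}^2$ factor on $|\nabla u_0|^2_{L^2}$ and is otherwise identical to \eqref{est2}.

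Weak convergence of the Galerkin subsequence (the bounds \eqref{bound1}--\eqref{bound4} reapply with the substituted constants, and the monotone limit identification for the $q$-Laplace term uses only \eqref{pLaplace_ineq}), the passage $k\to \infty$, and uniqueness via the energy inequality on the difference (employing once more \eqref{pLaplace_ineq} and the resulting nonnegativity \eqref{wsig_nonneg}) transfer line by line from the proof of Proposition \ref{prop:W1lin}. The only point requiring care is the bookkeeping: one must verify that the smallness constants $\tau,\sigma,\eta,\mu,\epsilon_0,\epsilon_1$ can still be chosen so that all the modified left-hand coefficients are strictly positive, which amounts to replacing \eqref{W1constants} by the analogous system with $c^2$ replaced by $\overline{c}^2$ in the denominator bound on $\mu$ and with $\underline{b}(1-\ol{\delta})$, $\underline{b}\,\underline{\delta}$ in place of $b(1-\delta)$, $b\delta$. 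This is the only mild obstacle; once it is dispatched the remainder of the argument, including the fixed-point contraction of Theorem \ref{thm:W1}, is mechanical.
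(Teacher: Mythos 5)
Your proposal is correct and follows exactly the route the paper intends: the paper gives no explicit proof of this corollary, stating only that it follows "analogously" to Proposition \ref{prop:W1lin}, and your plan of rerunning the Galerkin argument with the spatially varying coefficients bounded pointwise by $\underline{b}(1-\overline{\delta})$, $\underline{b}\,\underline{\delta}$, $\underline{c}^2=1/\overline{\varrho}$, $\overline{c}^2=1/\underline{\varrho}$, $\underline{\alpha}$ on the left and the corresponding upper bounds on the data terms is precisely that analogy, including the correct identification of where $\overline{c}^4/(4\sigma)$ replaces $c^4/(4\sigma)$ in the higher-order estimate. The closing remark about the fixed-point contraction belongs to the subsequent corollary rather than this one, but that does not affect the argument here.
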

\begin{corollary}
Let $g \in L^{\infty }(0,T;W^{-\frac{q}{q+1},\frac{q+1}{q}}(\Gamma))$, $g_t \in L^{\frac{q+1}{q}}(0,T;W^{-\frac{q}{q+1},\frac{q+1}{q}}(\Gamma))$, $q>d-1$, $q \geq 1$ and assumptions \eqref{condcoeffacoust} be satisfied. 
For any $T>0$ there is a $\kappa_T>0$ such that for all $u_0, u_1 \in W^{1,q+1}(\Om)$,
with \eqref{W1_cond1}, there exists a unique weak solution $u \in \cW$ of \eqref{W1_coupled}, where $\cW$ is defined as in \eqref{defcW1}, with \eqref{W1_cond2} with $|k|$ replaced by $|k|_{L^{\infty}(\Om)}$, and $\overline{m}$ and $\overline{M}$ sufficiently small. 
\end{corollary}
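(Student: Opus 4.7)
The plan is to run verbatim the fixed-point argument from the proof of Theorem \ref{thm:W1}, with the scalar coefficients replaced by their spatial counterparts and controlled via the uniform bounds \eqref{condcoeffacoust}. I would define an operator $\cT:\cW\to X$ by $v\mapsto \cT v = u$, where $u$ solves the linearized coupled problem \eqref{W1lin_coupled} with $a(x,t) = (1-2k(x)v(x,t))/\lambda(x)$ and $f(x,t) = -2k(x)v_t(x,t)/\lambda(x)$. A direct computation gives $f - \tfrac12 a_t = -(k/\lambda)\,v_t$, and both $k/\lambda$ and $1/\lambda$ lie in $L^\infty(\Om)$ by \eqref{condcoeffacoust}.

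For the self-mapping property $\cT(\cW)\subseteq\cW$, the decisive ingredient is ruling out degeneracy of $a$. Since $v\in\cW$ and $q>d-1$, the estimate \eqref{W1Poincare} combined with the $L^\infty(\Om)$-bound on $k$ gives
\[
\|2kv\|_{L^\infty(0,T;L^\infty(\Om))} \leq 2|k|_{L^\infty(\Om)} C^\Om_{W^{1,q+1},L^\infty}\bigl[\max\{1+C_P, C^\Om_1\}\kappa_T + (1+C_P) T^{q/(q+1)} \overline{M} + C^\Om_2 T \overline{m}\bigr],
\]
and the modified \eqref{W1_cond2} (with $|k|$ replaced by $|k|_{L^\infty(\Om)}$) guarantees this is strictly less than $1$, yielding $0 < \underline{a} = (1-a_0)/\ol{\lambda} \leq a \leq (1+a_0)/\ul{\lambda} = \overline{a}$. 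The remaining hypotheses of the previous corollary follow from the elementary bounds $\|f-\tfrac12 a_t\|_{L^\infty(0,T;L^2(\Om))} \leq (|k|_{L^\infty(\Om)}/\ul{\lambda})\overline{m}$ and $\|f\|_{L^\infty(0,T;H^1(\Om))} \leq (2|k|_{L^\infty(\Om)}/\ul{\lambda})\overline{m}$, provided $\overline{m}$ is small. Calibrating $\overline{m}, \overline{M}, \kappa_T$ exactly as in Theorem \ref{thm:W1}, the higher energy estimate of the previous corollary then yields $u\in\cW$.

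For contractivity I would take $v^1, v^2 \in \cW$, set $\hat{u} = \cT v^1 - \cT v^2$ and $\hat{v} = v^1 - v^2$, subtract the two linearized equations, and test with $\hat{u}_t$. The monotonicity inequality \eqref{pLaplace_ineq} discards the difference of the $q$-Laplace damping terms, and the remaining manipulations reproduce \eqref{contractivity}--\eqref{contractivity_ab} with $k$ replaced by $k(x)/\lambda(x)$ and $c^2$ by $1/\varrho(x)$, each controlled by its $L^\infty(\Om)$-bound together with the Sobolev embedding $H^1(\Om)\hookrightarrow L^4(\Om)$. This produces contractivity in the norm $|||u|||^2 = \|\hat{u}_t\|^2_{L^\infty(0,T;L^2(\Om))} + \|\nabla \hat{u}_t\|^2_{L^2(0,T;L^2(\Om))} + \|\nabla \hat{u}\|^2_{L^\infty(0,T;L^2(\Om))}$ for $\overline{m}$ sufficiently small. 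Together with the self-mapping property and closedness of $\cW$, Banach's fixed-point theorem delivers the unique weak solution.

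The only genuinely delicate point is, exactly as in Theorem \ref{thm:W1}, preventing degeneracy of $1-2k(x)u$ via the $L^\infty$-in-space-and-time estimate \eqref{W1Poincare}; once this is secured by the modified \eqref{W1_cond2}, all remaining estimates transfer mechanically because the coefficients $\lambda, \varrho, b, \delta, k$ enter only through their uniform $L^\infty(\Om)$-bounds from \eqref{condcoeffacoust}.
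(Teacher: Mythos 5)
Your proposal is correct and coincides with the paper's own (implicit) argument: the paper states this corollary without a separate proof, asserting that it follows analogously to Theorem \ref{thm:W1}, and your choice $a=(1-2k(x)v)/\lambda(x)$, $f=-2k(x)v_t/\lambda(x)$ together with the uniform bounds from \eqref{condcoeffacoust} is exactly that adaptation, including the degeneracy control via \eqref{W1Poincare} and the contraction estimate with $k$ replaced by $k(x)/\lambda(x)$ and $c^2$ by $1/\varrho(x)$. One caveat worth recording: the claimed bound $\|f\|_{L^{\infty}(0,T;H^1(\Om))}\leq (2|k|_{L^{\infty}(\Om)}/\ul{\lambda})\,\overline{m}$ is not literally valid, since multiplication by the merely $L^{\infty}$ coefficient $k(x)/\lambda(x)$ does not map $H^1(\Om)$ into itself; however, in the proof of Proposition \ref{prop:W1lin} the $H^1$-norm of $f$ is used only through the embedding $H^1(\Om)\hookrightarrow L^4(\Om)$ in \eqref{W1_ineq3}, so it suffices to bound $\|f\|_{L^{\infty}(0,T;L^4(\Om))}\leq 2\,|k/\lambda|_{L^{\infty}(\Om)}\,C^{\Om}_{H^1,L^4}\,\overline{m}$, after which the argument goes through unchanged.
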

\section{Westervelt's equation in the formulation \eqref{W2_beta}} \label{section_W2}
\noindent We begin with the problem \eqref{W2_beta} in the case $\beta=0$:
\begin{align}\label{W2}
\begin{cases}
(1-2ku)u_{tt}-c^2 \text{div} (\nabla u +\varepsilon |\nabla u|^{q-1}\nabla u)
-b \Delta u_t  \\
=2k(u_t)^2 \, \text{ in } \Omega \times (0,T],
\vspace{2mm} \\
c^2 \frac{\partial u}{\partial n}+c^2 \varepsilon|\nabla u|^{q-1}\frac{\partial u}{\partial n}+b\frac{\partial u_t}{\partial n}=g  \ \ \text{on} \ \Gamma \times (0,T],
\vspace{2mm} \\
\alpha u_t+c^2 \frac{\partial u}{\partial n}+c^2 \varepsilon|\nabla u|^{q-1}\frac{\partial u}{\partial n}+b\frac{\partial u_t}{\partial n}=0  \ \ \text{on} \ \hat{\Gamma} \times (0,T],
\vspace{2mm} \\
(u,u_t)=(u_0, u_1) \ \ \text{on} \ \overline{\Om}\times \{t=0\}.
\end{cases}
\end{align}
\noindent We will first study the problem with the nonlinearity appearing only through damping:
\begin{equation}\label{W2lin}
\begin{cases}
au_{tt}-c^2 \text{div} (\nabla u +\varepsilon |\nabla u|^{q-1}\nabla u)
-b \Delta u_t
+fu_t=0
 \, \text{ in } \Omega \times (0,T],
\vspace{2mm} \\
c^2 \frac{\partial u}{\partial n}+c^2 \varepsilon|\nabla u|^{q-1}\frac{\partial u}{\partial n}+b\frac{\partial u_t}{\partial n}=g  \ \ \text{on} \ \Gamma \times (0,T],
\vspace{2mm} \\
\alpha u_t+c^2 \frac{\partial u}{\partial n}+c^2 \varepsilon|\nabla u|^{q-1}\frac{\partial u}{\partial n}+b\frac{\partial u_t}{\partial n}=0  \ \ \text{on} \ \hat{\Gamma} \times (0,T],
\vspace{2mm}\\
(u,u_t)=(u_0, u_1) \ \ \text{on} \ \overline{\Om}\times \{t=0\}
.\\
\end{cases}
\end{equation}
\begin{proposition} \label{prop:W2lin}
Let $T>0$, $c^2$, $b$, $\varepsilon>0$, $\alpha \geq 0$, $\delta \in (0,1)$, $q \geq 1$ and assume that
\begin{itemize} 
\item 
$a\in L^{\infty}(0,T;L^{\infty}(\Omega))$, $ a_t\in L^{\infty}(0,T;L^2(\Omega))$,
$0< \underline{a} < a(x,t) < \overline{a}$,
\item
$f\in L^{\infty}(0,T;L^2(\Omega))$, 
\item
 $g \in L^2(0,T;H^{-1/2}(\Gamma))$,
\item
$ u_0 \in W^{1,q+1}(\Omega)$, $u_1\in L^2(\Omega)$,
\end{itemize}
with 
$$\|f-\frac{1}{2}a_t\|_{L^{\infty}(0,T;L^2(\Omega))} \leq \hat{b} < \min \{\frac{\underline{a}}{4T(\Chl)^2}, \frac{b}{2(\Chl)^2}\}. $$
\noindent Then \eqref{W2lin} has a weak solution 
\begin{align} \label{W2lin_X}
u \in \tilde{X}:=~&  C^1(0,T;L^2(\Om)) \cap C(0,T;W^{1,q+1}(\Om)) \cap H^1(0,T;H^1(\Om)) ,
\end{align}
which satisfies the energy estimate
\begin{equation} \label{W2_energyest}
\begin{split}
&\Bigl[\frac{\underline{a}}{4}-(C_2^{tr})^2 \tau -T\hat{b}(\Chl)^2 \Bigr]\|u_{t}\|^2_{\LiT} +\frac{c^2}{4}\|\nabla u\|^2_{\LiT}\\
& +\Bigl[\frac{b}{2}-(C_2^{\text{tr}})^2 \tau  -\hat{b}(\Chl)^2\Bigr]\|\nabla u_t\|^2_{\LT}
 \\
&+\frac{c^2 \varepsilon}{2(q+1)}\|\nabla u\|^{q+1}_{L^{\infty}(0,T;L^{q+1}(\Om))}+\frac{\alpha}{2}\|u_t\|^2_{L^2(0,T;L^2(\hG))}  \\
 \leq& \  \frac{1}{4\tau }(\|g\|^2_{L^1(0,T;H^{-1/2}(\Gamma))}+\|g\|^2_{L^2(0,T;H^{-1/2}(\Gamma))})+\frac{\underline{a}}{2}|u_{1}|^2_{L^2(\Om)} \\
&+\frac{c^2}{2}|\nabla u_0|^2_{L^2(\Om)}+\frac{c^2 \varepsilon}{q+1}|\nabla u_0|^{q+1}_{L^{q+1}(\Om)}, 
\end{split}
\end{equation}
for some constant 
\begin{equation*}
0< \tau < \min \Bigl \{\frac{b-2\hat{b}(\Chl)^2}{2(C_2^{\text{tr}})^2},\frac{\underline{a}-4(\Chl)^2\hat{b}T}{4(C_2^{\text{tr}})^2} \Bigr \}\, .
\end{equation*}
\end{proposition}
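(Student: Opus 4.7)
The plan is to mirror the Galerkin construction used in Proposition \ref{prop:W1lin}, with the principal modification being that the $q$-growth nonlinearity now sits on $\nabla u$ rather than on $\nabla u_t$. First I would smoothly approximate the data $a$, $f$, $g$ in the same way as before, introduce an orthonormal basis in the weighted space $L^2_{\tilde a_k}(\Omega)$ which is also a basis of $W^{1,q+1}(\Omega)$ and orthonormal in $L^2(\hat\Gamma)$, and set up the finite-dimensional ODE problem for $u_n^{(k)}$. Local existence on $[0,\tilde T]$ for the ODE system follows from the smoothness of the coefficients; the uniform energy estimate below will then extend this to $[0,T]$.

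The central calculation is the lower-order energy estimate obtained by testing with $u_{n,t}^{(k)}$. Integration by parts handles the four leading terms:
\begin{align*}
\int_\Omega a\, u_{tt}u_t\,dx &= \tfrac12\tfrac{d}{dt}\int_\Omega a u_t^2\,dx-\tfrac12\int_\Omega a_t u_t^2\,dx,\\
-\int_\Omega c^2\,\text{div}(\nabla u)u_t\,dx &= c^2\int_\Omega\nabla u\cdot\nabla u_t\,dx-\int_{\partial\Omega}c^2\tfrac{\partial u}{\partial n}u_t\,dx,\\
-\int_\Omega c^2\varepsilon\,\text{div}(|\nabla u|^{q-1}\nabla u)u_t\,dx &= \tfrac{c^2\varepsilon}{q+1}\tfrac{d}{dt}\int_\Omega|\nabla u|^{q+1}\,dx -\int_{\partial\Omega}c^2\varepsilon|\nabla u|^{q-1}\tfrac{\partial u}{\partial n}u_t\,dx,\\
-\int_\Omega b\Delta u_t\cdot u_t\,dx &= b\int_\Omega|\nabla u_t|^2\,dx-\int_{\partial\Omega}b\tfrac{\partial u_t}{\partial n}u_t\,dx.
\end{align*}
The combined boundary contributions collapse, via the absorbing and Neumann conditions, to $\alpha\int_{\hat\Gamma}u_t^2\,dx-\int_\Gamma g u_t\,dx$; the latter is bounded by $\|g\|_{H^{-1/2}(\Gamma)}C_2^{\mathrm{tr}}\|u_t\|_{H^1(\Omega)}$ and absorbed via Young's inequality \eqref{Young} into the $\|u_t\|_{L^2}^2$ and $\|\nabla u_t\|_{L^2}^2$ coefficients with the small parameter $\tau$. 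The $\int f u_t^2$ term is estimated by $\hat b(C^\Omega_{H^1,L^4})^2\|u_t\|^2_{H^1}$ exactly as in the proof of Proposition \ref{prop:W1lin}, and the time-integrated $\|u_t\|^2_{L^2(L^2)}$ contribution is absorbed using \eqref{time_ineq} at the cost of the factor $T\hat b(C^\Omega_{H^1,L^4})^2$. Taking the essential supremum over $[0,T]$ yields \eqref{W2_energyest}, with the smallness condition on $\hat b$ and the smallness range of $\tau$ ensuring all coefficients on the left-hand side remain positive.

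With the uniform bound in hand, $(u_n^{(k)})$ is bounded in $\tilde X$; extracting a weakly/weakly-$*$ convergent subsequence gives $u_t^{(k)},\nabla u_t^{(k)}$ converging in $L^2(L^2)$ and $\nabla u^{(k)}$ converging weakly-$*$ in $L^\infty(L^{q+1})$, together with the traces on $\hat\Gamma$ converging weakly in $L^2(L^2)$. The nonroutine point is passing to the limit in $|\nabla u_n^{(k)}|^{q-1}\nabla u_n^{(k)}$; unlike in Proposition \ref{prop:W1lin}, this term involves $\nabla u$ rather than $\nabla u_t$, so Aubin--Lions-type arguments do not directly yield strong convergence. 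I would use the Minty--Browder monotonicity trick based on inequality \eqref{pLaplace_ineq}: test the equation for $u_n^{(k)}$ against $u_n^{(k)}-\phi$ for a test function $\phi$, pass the weak-limit side through, and exploit
\[
\int_0^T\!\!\int_\Omega \langle|\nabla u_n^{(k)}|^{q-1}\nabla u_n^{(k)}-|\nabla\phi|^{q-1}\nabla\phi,\nabla u_n^{(k)}-\nabla\phi\rangle\,dx\,ds\geq 0
\]
to identify the limit of the nonlinearity as $|\nabla u|^{q-1}\nabla u$. The passage $k\to\infty$ is then analogous to step 2 in Proposition \ref{prop:W1lin}, and the weak lower semicontinuity of the convex functionals on the left of the energy inequality transfers \eqref{W2_energyest} to $u$. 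The main obstacle is therefore the identification of the nonlinear limit via monotonicity; everything else is a direct adaptation of the computations already carried out for Proposition \ref{prop:W1lin}.
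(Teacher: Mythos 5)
Your proposal is correct and follows essentially the same route as the paper: the paper's proof of Proposition \ref{prop:W2lin} consists precisely of testing the weak form with $u_t$, integrating by parts so that the $\varepsilon$-term contributes the conserved quantity $\frac{c^2\varepsilon}{q+1}|\nabla u|^{q+1}_{L^{q+1}(\Om)}$, bounding the boundary integral by $\tau(C_2^{tr})^2(\|u_t\|^2_{L^\infty(L^2)}+\|\nabla u_t\|^2_{L^2(L^2)})+\frac{1}{4\tau}(\|g\|^2_{L^1(H^{-1/2})}+\|g\|^2_{L^2(H^{-1/2})})$, and absorbing the $(f-\frac12 a_t)u_t^2$ term via $H^1(\Om)\hookrightarrow L^4(\Om)$ and \eqref{time_ineq}, exactly as you describe. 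The only difference is that the paper declares the Galerkin construction and limit passage "standard" and omits them, whereas you supply the Minty--Browder monotonicity argument based on \eqref{pLaplace_ineq} to identify the weak limit of $|\nabla u_n^{(k)}|^{q-1}\nabla u_n^{(k)}$ — a correct and genuinely needed detail, since strong convergence of $\nabla u_n^{(k)}$ is not available here.
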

\begin{proof}
The proof follows along the line of the standard Galerkin approximation method. Here we will focus on deriving the energy estimate. \\
The weak form of the problem is given as follows:
\begin{equation}\label{W2linweak}
\begin{split}
&\int_\Omega \Bigl\{a u_{tt} w + c^2 (\nabla u +\varepsilon |\nabla u|^{q-1}\nabla u)\cdot \nabla w +b \nabla u_t \cdot \nabla w
 \Bigr\} \, dx +\alpha \int_{\hat{\Gamma}} u_t w \, dx  \\
=& \ -\int_0^t \int_\Omega fu_t w\, dx+\int_{\Gamma}g w \, dx,\quad \forall w\in W^{1,q+1}(\Omega).
\end{split}
\end{equation}
Testing \eqref{W2linweak} with $u_t$ and integrating with respect to space and time yields
\begin{equation} \label{W2_ineq1}
\begin{split}
&\Bigl[\frac{1}{2}\int_{\Om} a(u_t)^2 \, dx +\frac{c^2}{2} |\nabla u|^2_{L^2(\Om)}+\frac{c^2 \varepsilon}{q+1} |\nabla u|^{q+1}_{L^{q+1}(\Om)}\Bigr]_0^t  \\
& + b \int_0^t |\nabla u_t(s)|^2_{L^2(\Om)} \, ds+\alpha \int_0^t |u_t(s)|^2_{L^2(\hG)} \, ds  \\
 =& \- \int_0^t \int_{\Om} (f-\frac{1}{2}a_t)(u_t)^2 \, dx \, ds + \int_0^t \int_{\Gamma} gu_t \, dx \, ds  \\ 
\leq& \ \|f-\frac{1}{2}a_t\|_{L^{\infty}(0,T;L^2(\Om))} \int_0^t  |u_t(s)|^2_{L^4(\Om)}  \, ds +\int_0^t \int_{\Gamma} gu_t \, dx \, ds.
\end{split}
\end{equation}
By taking $\displaystyle \esssup_{[0,T]}$ in \eqref{W2_ineq1} and making use of the embedding $H^1(\Om) \hookrightarrow L^4(\Om)$ and estimating the boundary integral in the following way 
\begin{align*}
\int_0^t \int_{\Gamma} gu_t \, dx \, ds \leq& \ \tau (C_2^{tr})^2 \|\nabla u_t\|^2_{L^2(0,T;L^2(\Om))}+\frac{1}{4\tau}\|g\|^2_{L^2(0,T;H^{-1/2}(\Gamma))}  \\
& +\tau (C_2^{tr})^2 \| u_t\|^2_{L^{\infty}(0,T;L^2(\Om))}+\frac{1}{4\tau}\|g\|^2_{L^1(0,T;H^{-1/2}(\Gamma))},
\end{align*} we obtain
\begin{align*} 
&\frac{\underline{a}}{4}\|u_{t}\|^2_{\LiT}
+\frac{b}{2}\|\nabla u_t\|^2_{\LT}  +\frac{c^2 \varepsilon}{2(q+1)}\|\nabla u\|^{q+1}_{L^{\infty}(0,T;L^{q+1}(\Om))} \nn \\
& +\frac{c^2}{4}\|\nabla u\|^2_{\LiT}+\frac{\alpha}{2}\|u_t\|^2_{L^2(0,T;L^2(\hG))}  \nn \\
\leq& \ \hat{b}(\Chl)^2 \|u_t\|^2_{L^2(0,T;H^1(\Om))}+\tau (C_2^{tr})^2(\|u_t\|^2_{L^{\infty}(0,T;L^2(\Om))}+\|\nabla u_t\|^2_{L^2(0,T;L^2(\Om))}) \nn \\
&+\frac{1}{4 \tau}(\|g\|^2_{L^2(0,T;H^{-1/2}(\Gamma))}+\|g\|^2_{L^1(0,T;H^{-1/2}(\Gamma))}) +\frac{\underline{a}}{2}|u_{1}|^2_{L^2(\Om)} 
+\frac{c^2}{2}|\nabla u_0|^2_{L^2(\Om)}\nn \\
&+\frac{c^2 \varepsilon}{q+1}|\nabla u_0|^{q+1}_{L^{q+1}(\Om)}, 
\end{align*}
which leads to \eqref{W2_energyest}.
\end{proof}
\noindent If we consider an equation with an added linear lower order damping term  
\begin{equation}\label{W2lin_beta}
\begin{cases}
au_{tt}-c^2 \text{div} (\nabla u +\varepsilon |\nabla u|^{q-1}\nabla u)
-b \Delta u_t +\beta u_t
+fu_t=0  \, \text{ in } \Omega \times (0,T],
\vspace{2mm} \\
c^2 \frac{\partial u}{\partial n}+c^2 \varepsilon|\nabla u|^{q-1}\frac{\partial u}{\partial n}+b\frac{\partial u_t}{\partial n}=g  \ \ \text{on} \, \Gamma \times (0,T],
\vspace{2mm} \\
\alpha u_t+c^2 \frac{\partial u}{\partial n}+c^2 \varepsilon|\nabla u|^{q-1}\frac{\partial u}{\partial n}+b\frac{\partial u_t}{\partial n}=0  \ \ \text{on} \, \hat{\Gamma} \times (0,T],
\\
(u,u_t)=(u_0, u_1) \ \ \text{on} \ \overline{\Om}\times \{t=0\},
\\
\end{cases}
\end{equation}
we will be able to obtain an energy estimate valid for arbitrary time:
\begin{proposition} \label{prop:W2lin_beta}
Let $\beta>0$ and the assumptions in Proposition \ref{prop:W2lin} hold with 
$$\|f-\frac{1}{2}a_t\|_{L^{\infty}(0,T;L^2(\Omega))} \leq \hat{b} < \frac{1}{2(\Chl)^2} \min \{b, \beta\}. $$
\noindent Then \eqref{W2lin_beta} has a weak solution in $\tilde{X}$, defined as in \eqref{W2lin_X}, which satisfies the energy estimate
\begin{equation} \label{W2_energyest_1}
\begin{split}
&\frac{\underline{a}}{4}\|u_{t}\|^2_{\LiT}+\Bigl[\frac{b}{2}-(C_2^{\text{tr}})^2 \tau -\hat{b}(\Chl)^2\Bigr] \|\nabla u_t\|^2_{L^2(0,T;L^2(\Om))}
  \\
& +\Bigl[\frac{\beta}{2}-(C_2^{\text{tr}})^2 \tau -\hat{b}(\Chl)^2\Bigr] \|u_t\|^2_{L^2(0,T;L^2(\Om))}+\frac{c^2}{4}\|\nabla u\|^2_{\LiT} \\
& +\frac{c^2 \varepsilon}{2(q+1)}\|\nabla u\|^{q+1}_{L^{\infty}(0,T;L^{q+1}(\Om))} +\frac{\alpha}{2}\|u_t\|^2_{L^2(0,T;L^2(\hG))}  \\
\leq& \  \frac{1}{4\tau }\|g\|^2_{L^2(0,T;H^{-1/2}(\Gamma))}+\frac{\underline{a}}{2}|u_{1}|^2_{L^2(\Om)}+\frac{c^2}{2}|\nabla u_0|^2_{L^2(\Om)}+\frac{c^2 \varepsilon}{q+1}|\nabla u_0|^{q+1}_{L^{q+1}(\Om)}, 
\end{split}
\end{equation}
for some constant 
\begin{equation*}
0< \tau < \min \Bigl \{\frac{b-2\hat{b}(\Chl)^2}{2(C_2^{\text{tr}})^2},\frac{\beta-2(\Chl)^2\hat{b}}{2(C_2^{\text{tr}})^2} \Bigr \}\, .
\end{equation*}
\end{proposition}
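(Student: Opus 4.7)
The construction of a weak solution in $\tilde{X}$ proceeds exactly as in Proposition \ref{prop:W2lin} via the standard Galerkin scheme: first smooth in time approximations $(a_k,f_k,g_k)$ of $(a,f,g)$ preserving the bounds on $\hat b$ and $\underline{a}\leq a_k\leq \overline{a}$; then finite-dimensional discretization in an appropriate basis of $W^{1,q+1}(\Omega)$ that is orthonormal in $L^2_{\tilde a_k}(\Omega)$ and in $L^2(\hat\Gamma)$; the uniform \emph{a priori} bounds below extend the local-in-time ODE solutions to $[0,T]$; the weak limits $n\to\infty$ and $k\to\infty$ are passed in the linear terms as well as in the monotone nonlinear term $|\nabla u|^{q-1}\nabla u$ (which is handled either by weak convergence in $L^{(q+1)/q}$ combined with the \eqref{pLaplace_ineq} Minty-type argument, or simply passed via strong compactness since the spatial nonlinearity sits on $u$ rather than $u_t$ and $u\in C(0,T;W^{1,q+1}(\Omega))$). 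All of this is verbatim from Proposition \ref{prop:W2lin}, so the only point to exhibit is the improved lower-order energy estimate \eqref{W2_energyest_1} (which is where the $\beta$-term changes the picture).

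The plan is to test the weak form with $u_t$ and integrate in space and time, reproducing the identity corresponding to \eqref{W2_ineq1} but now with the extra term $\beta\int_0^t|u_t(s)|_{L^2(\Om)}^2\, ds$ on the left-hand side. For the interior nonlinear-absorption term on the right, I use $\|f-\tfrac12 a_t\|_{L^{\infty}(L^2)}\leq\hat b$ together with $H^1(\Om)\hookrightarrow L^4(\Om)$ to get a bound of the form $\hat b(\Chl)^2\int_0^t|u_t|_{H^1(\Om)}^2\, ds$; this naturally splits as $\hat b(\Chl)^2\bigl(\|u_t\|_{L^2(L^2)}^2+\|\nabla u_t\|_{L^2(L^2)}^2\bigr)$. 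The crucial gain is that the first piece is now absorbed into the new $\beta$ contribution on the left (thanks to $\hat b<\beta/(2(\Chl)^2)$) rather than into $\|u_t\|_{L^\infty(L^2)}^2$ via \eqref{time_ineq}, which is exactly what removes the $T$-factor from the $\underline a/4$ coefficient.

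For the boundary integral I use
\[
\int_0^t\!\!\int_\Gamma g\,u_t\,dx\,ds \leq \int_0^t |g(s)|_{H^{-1/2}(\Gamma)}\,C_2^{tr}|u_t(s)|_{H^1(\Om)}\, ds
\leq \tau (C_2^{tr})^2\|\nabla u_t\|_{L^2(L^2)}^2 + \tau (C_2^{tr})^2\|u_t\|_{L^2(L^2)}^2 + \tfrac{1}{4\tau}\|g\|_{L^2(H^{-1/2})}^2,
\]
again using Young's inequality \eqref{Young} with $s=2$ and splitting $|u_t|_{H^1}^2=|u_t|_{L^2}^2+|\nabla u_t|_{L^2}^2$. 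Note that $\|u_t\|_{L^2(L^2)}^2$ now goes into the $\beta$-slot rather than the $\|u_t\|_{L^\infty(L^2)}^2$-slot, which is why only the $\|g\|_{L^2(H^{-1/2})}$ norm appears on the right side of \eqref{W2_energyest_1} (unlike \eqref{W2_energyest} where the $L^1(H^{-1/2})$ norm was needed to feed the $L^\infty(L^2)$ piece of $u_t$).

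Taking $\esssup_{t\in[0,T]}$ and collecting terms, the coefficient of $\|u_t\|_{L^2(0,T;L^2(\Om))}^2$ becomes $\tfrac{\beta}{2}-(C_2^{tr})^2\tau-\hat b(\Chl)^2$ and the coefficient of $\|\nabla u_t\|_{L^2(0,T;L^2(\Om))}^2$ becomes $\tfrac{b}{2}-(C_2^{tr})^2\tau-\hat b(\Chl)^2$; both are positive under the stated smallness conditions on $\hat b$ and the stated choice of $\tau$. The coefficient of $\|u_t\|_{L^\infty(0,T;L^2(\Om))}^2$ is just $\underline a/4$, with no $T$-dependence. The remaining terms on the left (the $c^2/4\,\|\nabla u\|_{L^\infty(L^2)}^2$, the $c^2\varepsilon/(2(q+1))\,\|\nabla u\|_{L^\infty(L^{q+1})}^{q+1}$, and the $\alpha/2\,\|u_t\|_{L^2(L^2(\hat\Gamma))}^2$) come directly from the identity and require no splitting, and the initial data terms on the right are immediate. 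The only step where care is needed is to make sure the boundary estimate is split solely into $L^2(L^2)$ pieces (never into $L^\infty(L^2)$), so that every term produced on the right can be fed either into the $\beta$-slot or into the $b$-slot; this is the main structural point of the proof and is where the $\beta>0$ hypothesis is essential.
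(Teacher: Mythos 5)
Your proposal is correct and follows essentially the same route as the paper, which leaves this proof implicit and only derives the analogous estimate in Proposition \ref{prop:W2lin}: testing with $u_t$ and redirecting the $\|u_t\|^2_{L^2(0,T;L^2(\Omega))}$ contributions from both the $f-\frac12 a_t$ term and the boundary term into the new $\beta$-slot is exactly what removes the $T$-dependence from the $\|u_t\|^2_{\LiT}$ coefficient and the $L^1(0,T;H^{-1/2}(\Gamma))$ norm of $g$ from the right-hand side. The coefficients you obtain match \eqref{W2_energyest_1}, and the Galerkin machinery is indeed unchanged from Proposition \ref{prop:W2lin}.
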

\noindent We now proceed to the question of local existence of weak solutions for the problem \eqref{W2}. 
\begin{theorem} \label{thm:W2}
Let $c^2$, $b>0$, $\alpha \geq 0$, $\delta \in (0,1)$, $k \in \mathbb{R}$, $q>d-1$, $q \geq 1$, $g \in L^2(0,T;H^{-1/2}(\Gamma))$. 
For any $T>0$ there is a $\kappa_T>0$ such that for all $u_0 \in W^{1,q+1}(\Om)$, $u_1 \in L^2(\Om)$ with
\begin{align*}
&\|g\|^2_{L^2(0,T;H^{-1/2}(\Gamma))} +\|g\|^2_{L^1(0,T;H^{-1/2}(\Gamma))}
+|u_{1}|^2_{L^2(\Om)}+|\nabla u_{0}|^2_{L^2(\Om)} 
+|\nabla u_{0}|^{q+1}_{L^{q+1}(\Om)} \nn \\
&+ |u_0|^2_{L^1(\Om)} \leq \kappa_T^2
\end{align*}
there exists a weak solution $u \in \cW$ of \eqref{W2} where
\begin{equation}\label{defcW2}
\begin{split}
\cW =\{v\in \tilde{X} 
:& \ \|v_{t}\|_{L^{\infty}(0,T;L^2(\Omega))}\leq \overline{m}\\
& \wedge \| \nabla v_t\|_{L^2(0,T;L^2(\Omega))}\leq \overline{m}\\
& \wedge \| \nabla v\|_{L^{\infty}(0,T;L^{q+1}(\Omega))}\leq \overline{M} 
\}, 
\end{split}
\end{equation}
with
\begin{align} \label{W2cond}
&2|k|C_{W^{1,\qq+1},L^{\infty}}^\Omega [C_1^{\Om}\kappa_T+(1+C_P)\overline{M}+C_2^{\Om} T \overline{m}] <1,
\end{align}
and $\overline{m}$ and $\overline{M}$ are sufficiently small.
\end{theorem}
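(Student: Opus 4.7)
The approach mirrors Theorem \ref{thm:W1}: introduce a fixed-point operator $\cT : \cW \to \tilde X$, $v \mapsto u$, where $u$ is the weak solution of the linearized problem \eqref{W2lin} provided by Proposition \ref{prop:W2lin}, taken with coefficients
\begin{align*}
a = 1-2kv, \qquad f = -2kv_t.
\end{align*}
I would first verify the hypotheses of Proposition \ref{prop:W2lin}. Since $q>d-1$ and $v\in\cW$, the embedding $W^{1,q+1}(\Omega)\hookrightarrow L^\infty(\Omega)$ together with \eqref{W2Poincare} gives
\begin{align*}
\|v\|_{L^\infty(0,T;L^\infty(\Omega))}\leq C_{W^{1,q+1},L^\infty}^\Omega\bigl[C_1^\Omega|u_0|_{L^1(\Omega)}+(1+C_P)\overline{M}+C_2^\Omega T\overline{m}\bigr],
\end{align*}
so the smallness assumption \eqref{W2cond} forces $|2kv|\leq a_0<1$, and hence the non-degeneracy $0<\underline{a}=1-a_0\leq a\leq 1+a_0=\overline{a}$ holds. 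Moreover $a_t=-2kv_t\in L^\infty(0,T;L^2(\Omega))$, while $f-\tfrac12 a_t=-kv_t$ has $L^\infty(0,T;L^2(\Omega))$-norm at most $|k|\overline m$, so the threshold $\hat b<\min\{\underline a/(4T(\Chl)^2),\,b/(2(\Chl)^2)\}$ is met once $\overline m$ is chosen small enough.

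For the self-mapping property I would invoke the energy estimate \eqref{W2_energyest}: its right-hand side is controlled by a constant multiple of the data and initial-value norms appearing in the assumptions of the theorem (the term $|u_0|^2_{L^1(\Omega)}$ enters through the Poincar\'e-based bound \eqref{W2Poincare}), all of which are bounded by $\kappa_T^2$. Choosing $\kappa_T$ sufficiently small relative to $\overline m$, $\overline M^{q+1}$ and the positive left-hand side coefficients in \eqref{W2_energyest} reproduces the three bounds defining $\cW$ in \eqref{defcW2} for $u=\cT v$, giving $\cT(\cW)\subseteq\cW$.

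Since Theorem \ref{thm:W2} asserts only existence, I would not attempt Banach contraction as in Theorem \ref{thm:W1} but instead close the argument with a Schauder-type fixed-point theorem. The set $\cW$ is nonempty, convex, and bounded in the reflexive spaces underlying $\tilde X$, hence weakly-$*$ compact. For continuity of $\cT$ in a suitably weak topology, I would take $v^{(n)}\to v$ in $\cW$, extract subsequences of $u^{(n)}=\cT v^{(n)}$ converging weakly-$*$ in $\tilde X$, and use Aubin--Lions-type compactness based on the bound on $\nabla u^{(n)}_t$ in $L^2(0,T;L^2(\Omega))$ and on $u^{(n)}_{tt}$ in a negative-order space extracted from the equation, upgrading to strong convergence of $u^{(n)}_t$ in $L^2(0,T;L^2(\Omega))$ and of $u^{(n)}$ in $C(0,T;L^p(\Omega))$ for some $p<q+1$.

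The main obstacle is passing to the limit in the quasilinear elliptic term $c^2\varepsilon\,\mathrm{div}(|\nabla u^{(n)}|^{q-1}\nabla u^{(n)})$: since the strong damping acts only on $\nabla u_t$, at best weak-$*$ convergence of $\nabla u^{(n)}$ in $L^\infty(0,T;L^{q+1}(\Omega))$ is available, so the nonlinear limit of $|\nabla u^{(n)}|^{q-1}\nabla u^{(n)}$ in $L^{\frac{q+1}{q}}(0,T;L^{\frac{q+1}{q}}(\Omega))$ must be identified by Minty's monotonicity trick based on \eqref{pLaplace_ineq}. Together with the straightforward passages to the limit in $au^{(n)}_{tt}$ and $fu^{(n)}_t$ afforded by the strong convergence of the coefficient factors, Schauder's theorem then yields a fixed point $u\in\cW$, which is the desired weak solution of \eqref{W2}.
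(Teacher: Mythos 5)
Your construction of the fixed-point operator and the verification of the self-mapping property coincide with the paper's argument: the same choice $a=1-2kv$, $f=-2kv_t$, the same use of \eqref{W2Poincare} and \eqref{W2cond} to secure non-degeneracy of $a$, and the same calibration of $\kappa_T$, $\overline{m}$, $\overline{M}$ against the coefficients in \eqref{W2_energyest}. The divergence, and the gap, is in how you close the existence argument. You propose Schauder's theorem, which requires $\cT$ to be a \emph{continuous, single-valued} map on the convex weak-$*$ compact set $\cW$. But Proposition \ref{prop:W2lin} asserts only \emph{existence} of a weak solution to \eqref{W2lin}, not uniqueness: the $q$-Laplacian acts on $\nabla u$ rather than $\nabla u_t$, so testing the difference of two solutions with $\hat u_t$ does not produce a sign-definite term from \eqref{pLaplace_ineq}, and the paper explicitly records (after Theorem \ref{thm:W2_beta}) that uniqueness remains open for this reason. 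Consequently $\cT$ is a priori set-valued, and neither its continuity nor even its well-definedness can be established; Schauder is not applicable as stated, and upgrading to a Kakutani-type theorem would require convexity and closedness of the solution sets, which is not available. A second, related weak point is your Minty step: the requisite inequality $\limsup_n\int_0^T\int_\Omega|\nabla u^{(n)}|^{q+1}\le\int_0^T\int_\Omega\chi\cdot\nabla \bar u$ must be extracted from the equation (strong compactness of $\nabla u^{(n)}$ is unavailable, as you note), and since $u^{(n)}$ solves a \emph{different} linearized equation for each $n$, this requires controlling the varying coefficients $a^{(n)}$, $f^{(n)}$ in the energy identity; this is not spelled out and is not routine.

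The paper avoids both difficulties by dispensing with a fixed-point theorem altogether: it takes the sequence of iterates $u^n=\cT u^{n-1}$ (any measurable selection suffices), uses that $\cW$ is weak-$*$ compact to extract a weak-$*$ convergent subsequence with limit $\bar u\in\cW$, and passes to the limit directly in the nonlinear weak form of \eqref{W2}. The $q$-Laplacian difference $|\nabla\bar u|^{q-1}\nabla\bar u-|\nabla u^n|^{q-1}\nabla u^n$ is handled not by monotonicity but by the integral (mean-value) representation
$c^2\varepsilon\int_0^1|\nabla(u^n+\sigma\hat u)|^{q-3}\bigl[|\nabla(u^n+\sigma\hat u)|^2\nabla\hat u+(q-1)(\nabla(u^n+\sigma\hat u)\cdot\nabla\hat u)\nabla(u^n+\sigma\hat u)\bigr]\,d\sigma$
with $\hat u=\bar u-u^n$, paired against a fixed test function. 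If you want to salvage your route, you would first have to prove uniqueness for \eqref{W2lin} (thereby resolving a question the paper leaves open) or reformulate the argument so that continuity of $\cT$ is never invoked, which is precisely what the iteration-plus-compactness scheme accomplishes.
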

\begin{proof}
We define an operator $\cT: \cW \rightarrow \tilde{X}$, $v\mapsto \cT v=u$, where $u$ solves \eqref{W2lin} with
\begin{align}
a=1-2kv, \ f=-2kv_t.
\end{align}
Proposition \ref{prop:W2lin}  will allow us to prove that $\cT$ is a self-mapping. The assumptions of the proposition are satisfied, since for $v \in \cW$ because of \eqref{W2Poincare} we have
$$0< \underline{a}=1-a_0 <a(x,t) < \overline{a}=1+a_0,$$ 
where
\begin{align*}
& a_0= 2|k|C_{W^{1,\qq+1},L^{\infty}}^\Omega [C_1^{\Om}\kappa_T+(1+C_P)\overline{M}+C_2^{\Om} T \overline{m}], 
\end{align*}
and by \eqref{defcW2} $\|f-\frac{1}{2}a_t\|_{L^{\infty}(0,T;L^2(\Om))}=\|kv_t\|_{\LiT} \leq |k|\overline{m}$.
The energy estimate \eqref{W2_energyest} holds and we can conclude that for any $\overline{m}$, $\overline{M}>0$ such that 
\begin{align*}
& 2|k|\Cwl ((1+C_P)\overline{M}+C_2^{\Om}T\overline{m})<1, \\
& \overline{m} \leq \frac{1}{|k|}\min \{\frac{\underline{a}}{4(\Chl)^2}, \frac{b}{2(\Chl)^2}\}, 
\end{align*}
and under the assumption on smallness of initial and boundary data
\begin{equation*}
\begin{split}
\kappa_T<& \ \frac{1}{C_1^{\Om}}\Bigl(\frac{1}{2|k|C_{W^{1,\qq+1},L^{\infty}}^\Omega}-(1+C_P)\overline{M}-C_2^{\Om}T\overline{m}\Bigr),\\
\kappa_T^2 \leq& \ \min\{\frac{1}{\overline{C}}(\frac{\underline{a}}{4}-T\hat{b}(\Chl)^2-(C_2^{tr})^2 \tau)\overline{m}^2, \\
& \quad \quad \quad \frac{1}{\overline{C}}(\frac{b}{2}-\hat{b}(\Chl)^2-(C_2^{tr})^2 \tau )\overline{m}^2, \, \frac{1}{\overline{C}}\frac{c^2 \varepsilon}{2(q+1)}\overline{M}^{q+1}\}, 
\end{split}
\end{equation*} where $\overline{C}=\max\{\frac{1}{4\tau},\frac{\overline{a}}{2}, \frac{c^2}{2}, \frac{c^2 \varepsilon}{q+1}\}$,
operator $\cT$ maps into $\cW$. \\

\noindent Since $\cW$ is closed and bounded in the dual of a separable Banach space, $\cW$ is weakly-star compact. Existence of solutions then results from a compactness argument (see Theorem 6.1, \cite{brunn}): the sequence of fixed point iterates $u^n$ defined by 
$u^n = \mathcal{T} u^{n-1}$, 
$$ (1-2ku^{n-1})u^n_{tt}-c^2 \text{div}(\nabla u^n +\epsilon |\nabla u^n|^{q-1}\nabla u^n)-b \Delta u^n_t+\beta u^n_t=2ku_t^{n-1}u_t^n,$$ with $u^0$ chosen to be compatible with initial and boundary conditions, has a
weakly-star convergent subsequence whose w$\ast$-limit $\bar{u}$ lies in $\mathcal{W}$. This limit is a weak solution of the problem since
\begin{align*}
&\int_0^T \int_\Omega \big\{ ( 1-2k \bar{u}) \bar{u}_{tt}\phi  + [ c^2 ( \nabla \bar{u} + \varepsilon  | \nabla \bar{u} |^{q-1} \nabla \bar{u}) 
+ b \nabla \bar{u}_t ]\cdot \nabla \phi - 2k(\bar{u}_t )^2 \phi \\
&+\beta u_t \phi \big\} dx \, ds 
+\alpha \int_0^T \int_{\hG} \bar{u}_t \phi \, dx \, ds -\int_0^T \int_{\Gamma} g \phi \, dx \, ds\\
=& \ \int_0^T \int_\Omega \big\{ -\bar{u}_t (( 1-2k \bar{u}) \phi )_t + [ c^2 ( \nabla \bar{u} + \varepsilon  | \nabla \bar{u} |^{q-1} \nabla \bar{u}) 
+ b \nabla \bar{u}_t ]\cdot \nabla \phi - 2k(\bar{u}_t )^2 \phi \\
&+\beta u_t \phi \big\} dx \, ds  
+\alpha \int_0^T \int_{\hG} \bar{u}_t \phi \, dx \, ds -\int_0^T \int_{\Gamma} g \phi \, dx \, ds\\
=& \ \int_0^T \int_\Omega \Big\{ - (\bar{u}- u^n)_t ((1-2k\bar{u})\phi)_t + 2k u_t^n ((\bar{u}- u^{n-1} )\phi)_t \\
&- 2k(\bar{u}_t-u_t^n)\bar{u}_t\phi - 2k(\bar{u}_t-u_t^{n-1})u^n_t\phi + \beta(\bar{u}_t-u^n_t)\phi+ [c^2 \nabla(\bar{u}-u^n) \\
&+b\nabla (\bar{u}-u^n)_t]\cdot \nabla \phi
+c^2 \varepsilon \int_0^1 |\nabla(u^n+\sigma\hat{u})|^{q-3} [ |\nabla(u^n+\sigma \hat{u})|^2 \nabla \hat{u}\\
& +(q-1)(\nabla(u^n+\sigma \hat{u}) \cdot \nabla\hat{u})\nabla(u^n+\sigma\hat{u}) ] d\sigma \cdot \nabla\phi \Big\} \,dx\, ds \\
&+\alpha \int_0^T \int_{\hG} (\bar{u}- u^n) \phi \, dx \, ds \, 
\to 0 \text{ as } k \rightarrow \infty,
\end{align*}
for any $\phi \in C_0^\infty((0,T)\times \Omega)$, where $\hat{u}=\bar{u}-u^n$. 
\end{proof}
\indent Relying on Proposition \ref{prop:W2lin_beta}, we can also achieve short-time existence of solutions for the problem \eqref{W2_beta}, with $\beta>0$. Due to the estimate \eqref{W2Poincare} and therefore bound \eqref{W2cond}, the dependency on final time $T$ cannot be completely avoided.
\begin{theorem} \label{thm:W2_beta}
Let the assumptions of Theorem \ref{thm:W2} hold and $\beta>0$. For any $T>0$ there is a $\kappa_T>0$ such that for all $u_0 \in W^{1,q+1}(\Om)$, $u_1 \in L^2(\Om)$ with
\begin{align*}
&\|g\|^2_{L^2(0,T;H^{-1/2}(\Gamma))} +|u_{1}|^2_{L^2(\Om)}+|\nabla u_{0}|^2_{L^2(\Om)} 
+|\nabla u_{0}|^{q+1}_{L^{q+1}(\Om)}+ |u_0|^2_{L^1(\Om)} \leq \kappa_T^2,
\end{align*}
there exists a weak solution $u \in \cW$ of \eqref{W2_beta}, where $\cW$ is defined as in \eqref{defcW2}, with \eqref{W2cond}, and $\overline{m}$ and $\overline{M}$ are sufficiently small.
\end{theorem}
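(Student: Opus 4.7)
The plan is to follow almost verbatim the fixed-point argument used in Theorem \ref{thm:W2}, substituting Proposition \ref{prop:W2lin_beta} for Proposition \ref{prop:W2lin}. I define the operator $\cT: \cW \to \tilde{X}$ by $\cT v = u$, where $u$ is the weak solution of the linearized problem \eqref{W2lin_beta} with coefficients
\begin{equation*}
a = 1 - 2kv, \qquad f = -2kv_t,
\end{equation*}
and with $\beta > 0$ carried over from the nonlinear problem. The set $\cW$ is taken as in \eqref{defcW2}, and I seek a fixed point of $\cT$ in $\cW$.

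For the self-mapping property, I verify the hypotheses of Proposition \ref{prop:W2lin_beta} for any $v \in \cW$. The two-sided bound $0 < 1 - a_0 \leq a(x,t) \leq 1 + a_0$ follows from the $L^\infty$ estimate \eqref{W2Poincare} together with the smallness assumption \eqref{W2cond}, exactly as in the proof of Theorem \ref{thm:W2}. A direct computation gives
\begin{equation*}
\|f - \tfrac{1}{2}a_t\|_{L^\infty(0,T;L^2(\Om))} = \|k v_t\|_{L^\infty(0,T;L^2(\Om))} \leq |k|\, \overline{m},
\end{equation*}
so choosing
\begin{equation*}
\overline{m} < \frac{1}{2|k|(\Chl)^2}\min\{b,\beta\}
\end{equation*}
matches the assumption required by Proposition \ref{prop:W2lin_beta}. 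The resulting estimate \eqref{W2_energyest_1} is free of $\|g\|^2_{L^1(0,T;H^{-1/2}(\Gamma))}$ and controls $\|u_t\|_{L^\infty(0,T;L^2(\Om))}$, $\|\nabla u_t\|_{L^2(0,T;L^2(\Om))}$, and $\|\nabla u\|_{L^\infty(0,T;L^{q+1}(\Om))}$ in terms of the initial and boundary data only.

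Shrinking first $\overline{m}, \overline{M}$ and then $\kappa_T$ as in Theorem \ref{thm:W2}, I arrange the right side of \eqref{W2_energyest_1} to be dominated by the appropriate powers ($\overline{m}^2$ on the $u_t$--terms, $\overline{M}^{q+1}$ on the $\nabla u$--term), which yields $\cT(\cW) \subseteq \cW$. Existence of a weak solution of \eqref{W2_beta} then follows from the same weak-star compactness argument applied to the Picard iterates $u^{n+1} = \cT u^n$: the sequence has a weak-star convergent subsequence with limit $\bar{u} \in \cW$, and passage to the limit in the weak formulation proceeds identically to the proof of Theorem \ref{thm:W2}, the only new term being the linear contribution $\beta u_t^n \phi$, which is trivial to handle.

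The only subtlety is precisely the one noted in the statement: while \eqref{W2_energyest_1} is $T$-uniform, the non-degeneracy estimate \eqref{W2Poincare} carries the factor $C_2^\Om T \overline{m}$, which binds $\overline{m}$ to the prescribed final time through \eqref{W2cond}. Hence even with $\beta > 0$ the smallness threshold $\kappa_T$ inevitably depends on $T$, and I expect no purely $T$-independent statement to be obtainable by this method. Beyond this observation, no genuinely new technical obstacle arises relative to Theorem \ref{thm:W2}.
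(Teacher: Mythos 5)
Your proposal is correct and follows exactly the route the paper intends: the paper gives no separate proof of Theorem \ref{thm:W2_beta}, stating only that it follows by the argument of Theorem \ref{thm:W2} with Proposition \ref{prop:W2lin_beta} in place of Proposition \ref{prop:W2lin}, which is precisely what you carry out (including the correct adjustment of the smallness condition on $\overline{m}$ to $\hat b<\tfrac{1}{2(\Chl)^2}\min\{b,\beta\}$ and the observation that the $T$-dependence survives through \eqref{W2Poincare} and \eqref{W2cond}). No discrepancies with the paper's reasoning.
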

\indent Note that here, as in the case of homogeneous Dirichlet boundary conditions, the uniqueness remains an open problem due to the presence of $q-$Laplace damping term which hinders the derivation of higher order energy estimates. For details, the reader is refered to Remark 8, \cite{brunn}. 
\section{Westervelt's equation in the formulation \eqref{W3_gamma}} \label{section_W3}
\noindent We begin with investigations of the problem \eqref{W3_gamma} in the case $\gamma=0$:
\begin{align}\label{W3}
\begin{cases}
u_{tt}-\frac{c^2}{1-2\tilde{k}u_t}\Delta u
-b\,\text{div}\Bigl(((1-\delta) +\delta|\nabla u_t|^{\qq-1})\nabla u_t\Bigr)  =0 \, \text{ in } \Omega \times (0,T],
\vspace{2mm} \\
\frac{c^2}{1-2\tilde{k}u_t} \frac{\partial u}{\partial n}+b((1-\delta)+\delta|\nabla u_t|^{q-1})\frac{\partial u_t}{\partial n}=g   \ \text{on} \ \Gamma \times (0,T],
\vspace{2mm} \\
\alpha u_t+\frac{c^2}{1-2\tilde{k}u_t} \frac{\partial u}{\partial n}+b((1-\delta)+\delta|\nabla u_t|^{q-1})\frac{\partial u_t}{\partial n}=0  \ \text{on} \ \hat{\Gamma}\times (0,T],
\vspace{2mm} \\
(u,u_t)=(u_0, u_1) \ \ \text{on} \ \overline{\Om}\times \{t=0\}.\\
\end{cases}
\end{align}
We will once again first consider an equation with the nonlinearity only appearing through the damping term:
\begin{equation}\label{W3lin}
\begin{cases}
u_{tt}-a\Delta u
-b\,\text{div}\Bigl(((1-\delta) +\delta|\nabla u_t|^{\qq-1})\nabla u_t\Bigr)
=0 \, \text{ in } \Omega \times (0,T],
\vspace{2mm} \\
a \frac{\partial u}{\partial n}+b((1-\delta)+\delta|\nabla u_t|^{q-1})\frac{\partial u_t}{\partial n}=g  \ \ \text{on} \, \Gamma \times (0,T],
\vspace{2mm} \\
\alpha u_t+a \frac{\partial u}{\partial n}+b((1-\delta)+\delta|\nabla u_t|^{q-1})\frac{\partial u_t}{\partial n}=0  \ \ \text{on} \, \hat{\Gamma} \times (0,T],
\vspace{2mm}
\\
(u,u_t)=(u_0, u_1) \ \ \text{on} \ \overline{\Om}\times \{t=0\}.
\end{cases}
\end{equation}
\begin{proposition} \label{prop:W3lina} 
Let $T>0$, $b>0$, $\alpha \geq 0$, $\delta \in (0,1)$ and assume that
\begin{itemize} 
\item 
$a\in L^2(0,T;L^{\infty}(\Omega))$, $\nabla a\in L^2(0,T;L^2(\Omega))$,
\item
 $g \in L^{\frac{q+1}{q}}(0,T;W^{-\frac{q}{q+1},\frac{q+1}{q}}(\Gamma))$,
\item
$ u_0\in H^1(\Omega)$, $u_1\in L^2(\Omega)$,
\item $q > d-1, q \geq 1$,
\end{itemize}
\begin{align} \label{assumption1}
\begin{cases}
\hat{b}:=\frac{b(1-\delta)}{2}-\frac{T}{2}\|\nabla a\|_{L^2(0,T;L^2(\Omega))}-(\sqrt{T}+\frac{1}{2})\|a\|_{L^2(0,T;L^{\infty}(\Omega))}>0, \\
\tilde{b}:=\frac{1}{4}-2T(\qCwl C^{\Om}_2)^2\|\nabla a\|_{L^2(0,T;L^2(\Omega))}  >0, \ \text{for} \ q>1, 
\end{cases}
\end{align}
\begin{align} \label{cond_b}
\begin{cases}
\hat{b}:=\frac{b}{2}-(\frac{T}{2}+(C^{\Om}_{H^1,L^{\infty}})^2)\|\nabla a\|_{L^2(0,T;L^2(\Om))}\\\quad \quad -(\sqrt{T}+\frac{1}{2})\|a\|_{L^2(0,T;L^{\infty}(\Om))} >0, \\
\tilde{b}:=\frac{1}{4}-T(C^{\Om}_{H^1,L^{\infty}})^2\|\nabla a\|_{L^2(0,T;L^2(\Omega))}>0,\ \text{for} \ q=1.
\end{cases}
\end{align}
\noindent Then \eqref{W3lin} has a weak solution 
\begin{align} \label{W3_tildeX}
u\in \tilde{X}:=~&C^1(0,T;L^2(\Om)) \cap W^{1,q+1}(0,T;W^{1,q+1}(\Om))\},
\end{align}
which, for $q>1$, satisfies the energy estimate
\begin{equation} \label{W3lin_lower}
\begin{split}
&\hat{b}\|\nabla u_t\|^2_{L^2(0,T;L^2(\Om))}+\Bigl(\tilde{b}-\epsilon_0\Bigr)\|u_t\|_{L^{\infty}(0,T;L^2(\Omega)}^2   \\
&+(\frac{b \delta}{2}-\epsilon_1)\|\nabla u_t\|_{L^{q+1}(0,T;L^{q+1}(\Omega))}^{\qq+1} 
+\frac{\alpha}{2}\|u_t\|^2_{L^2(0,T;L^2(\hG))}   \\
\leq& \ \Bigl(\|a\|_{L^2(0,T;L^{\infty}(\Omega))} + \|\nabla a\|_{L^2(0,T;L^2(\Omega))}\Bigr)\frac12 |\nabla u_{0}|_{L^2(\Om)}^2 +\frac{1}{2}|u_1|^2_{L^2(\Om)} \\
&+ C(\tfrac{\epsilon_1}{2}, \tfrac{q+1}{2})T\Bigl((\qCwl (1+C_P))^2 \|\nabla a\|_{L^2(0,T;L^2(\Om))}\Bigr)^{\frac{\qq+1}{\qq-1}} \\
& +C(\tfrac{\epsilon_1}{2},q+1)(\qCwl(1+C_P))^{\frac{q+1}{q}}\|g\|^{\frac{q+1}{q}}_{L^{\frac{q+1}{q}}(0,T;W^{-\frac{q}{q+1},\frac{q+1}{q}}(\Gamma))}  \\
&+\frac{1}{4\epsilon_0} (C_1^{tr}C_2^{\Om})^2\|g\|^2_{L^1(0,T;W^{-\frac{q}{q+1},\frac{q+1}{q}}(\Gamma))},
\end{split}
\end{equation}
and for $q=1$ satisfies 
\begin{equation*} 
\begin{split}
&\Bigl(\hat{b}-\epsilon_0\Bigr)\|\nabla u_t\|^2_{L^2(0,T;L^2(\Om))} 
+\Bigl(\tilde{b}-\epsilon_0  \Bigr)\|u_t\|_{L^{\infty}(0,T;L^2(\Omega)}^2 
+\frac{\alpha}{2}\|u_t\|^2_{L^2(0,T;L^2(\hG))}   \\
\leq& \ \Bigl(\|a\|_{L^2(0,T;L^{\infty}(\Omega))} + \|\nabla a\|_{L^2(0,T;L^2(\Omega))}\Bigr)\frac12 |\nabla u_{0}|_{L^2(\Om)}^2 +\frac{1}{2}|u_1|^2_{L^2(\Om)} \\
&+\frac{1}{4 \epsilon_0}(C_2^{tr})^2(\|g\|^2_{L^1(0,T;H^{-1/2}(\Gamma))}+\|g\|^2_{L^2(0,T;H^{-1/2}(\Gamma))}),
\end{split}
\end{equation*}
for some sufficiently small constants $\epsilon_0, \epsilon_1 >0$.
\end{proposition}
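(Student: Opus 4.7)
The plan is to mirror the Galerkin construction of Proposition \ref{prop:W1lin}, with the main new feature being careful treatment of the space- and time-dependent principal coefficient $a$ in $-a\Delta u$. First I would derive the weak form. Integration by parts gives
\begin{equation*}
-\int_\Om a\Delta u\, w\, dx = \int_\Om a\,\nabla u\cdot \nabla w\, dx + \int_\Om w\,\nabla a\cdot \nabla u\, dx - \int_{\partial \Om} a\,w\,\tfrac{\partial u}{\partial n}\, dx,
\end{equation*}
and, combining with the Neumann/absorbing conditions (so that the $\partial_n u_t$ boundary terms cancel with the $b$-damping flux), the weak form reads
\begin{equation*}
\int_\Om \Big\{u_{tt}w+a\nabla u\cdot\nabla w+w\nabla a\cdot\nabla u+b((1-\delta)+\delta|\nabla u_t|^{q-1})\nabla u_t\cdot\nabla w\Big\}\,dx+\alpha\int_{\hG} u_t w\,dx = \int_\Gamma g w\,dx.
\end{equation*}

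The Galerkin scheme proceeds exactly as in the proof of Proposition \ref{prop:W1lin}: smooth in time $a\mapsto a_k$, $g\mapsto g_k$; build the weighted basis (with weight $\tilde a_k = T^{-1}\int_0^T a_k\,dt$); solve the resulting ODE system; and pass to the limits $n\to\infty$ and $k\to\infty$ using weak/weak-$*$ compactness together with the Minty-type monotonicity argument based on \eqref{pLaplace_ineq} for the $q$-Laplace term. I would skip the repetition of these standard passages and concentrate on the novel \emph{a priori} bound.

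The heart of the proof is the energy estimate obtained by testing with $u_t$ and integrating over $(0,t)\times\Om$. Since $a$ is not constant in time, the term $\int_\Om a\,\nabla u\cdot \nabla u_t$ is not a total time derivative; I handle it by writing $\nabla u(s)=\nabla u_0+\int_0^s\nabla u_r\,dr$ and splitting, which yields
\begin{equation*}
\Bigl|\int_0^t\!\!\int_\Om a\,\nabla u\cdot\nabla u_s\,dx\,ds\Bigr|\leq \tfrac12\|a\|_{L^2(0,T;L^\infty)}|\nabla u_0|_{L^2}^2+\bigl(\sqrt T+\tfrac12\bigr)\|a\|_{L^2(0,T;L^\infty)}\|\nabla u_t\|_{L^2(0,T;L^2)}^2,
\end{equation*}
accounting exactly for the $\|a\|_{L^2(L^\infty)}$-contributions in $\hat b$. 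The genuinely new cross term $\int_\Om u_t\,\nabla a\cdot\nabla u$ is the main obstacle: for $q>1$ I would invoke $W^{1,q+1}(\Om)\hookrightarrow L^\infty(\Om)$ (using $q>d-1$) together with the Poincaré split $|u_t|_{W^{1,q+1}}\leq (1+C_P)|\nabla u_t|_{L^{q+1}}+C_2^\Om |u_t|_{L^2}$. The $C_2^\Om|u_t|_{L^2}$-piece, combined with $\|\nabla u\|_{L^\infty(L^2)}\leq |\nabla u_0|_{L^2}+\sqrt T\|\nabla u_t\|_{L^2(L^2)}$ and a Cauchy--Schwarz/Young step of the form $XY\leq\tfrac12(X^2\|\nabla a\|_{L^2}+Y^2\|\nabla a\|_{L^2})$, produces the coefficient $2T(\qCwl C_2^\Om)^2\|\nabla a\|_{L^2(L^2)}$ multiplying $\|u_t\|_{L^\infty(L^2)}^2$, which is exactly the loss recorded in $\tilde b$. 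The $(1+C_P)|\nabla u_t|_{L^{q+1}}$-piece is absorbed into $\tfrac{b\delta}{2}\|\nabla u_t\|_{L^{q+1}(L^{q+1})}^{q+1}$ via Young's inequality with conjugate exponents $\tfrac{q+1}{2}$ and $\tfrac{q+1}{q-1}$ (this explains the exponent $\tfrac{q+1}{q-1}$ and the restriction $q>1$ in \eqref{assumption1}). The boundary integral $\int_0^t\!\!\int_\Gamma g u_t$ is estimated verbatim as in \eqref{W1_inequality1}.

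The case $q=1$ requires a different embedding since $W^{1,2}\hookrightarrow L^\infty$ fails in $d\geq 2$; here one uses $H^1(\Om)\hookrightarrow L^\infty(\Om)$ (valid only for $d=1$), producing the extra $(C^\Om_{H^1,L^\infty})^2$ in \eqref{cond_b}, and the $q$-Laplace trick is no longer available, so the $\int u_t\nabla a\cdot\nabla u$ term is absorbed directly into $\hat b\|\nabla u_t\|_{L^2(L^2)}^2$. After taking $\esssup_{[0,T]}$, applying Young with small parameters $\epsilon_0,\epsilon_1$ to the boundary contribution, and choosing them so the coefficients remain positive under \eqref{assumption1}--\eqref{cond_b}, one obtains the claimed estimate. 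The main difficulty to anticipate is bookkeeping: ensuring the constants line up so that the Young-type splittings give \emph{linear} dependence on $\|\nabla a\|_{L^2(L^2)}$ in $\tilde b$ and $\hat b$, rather than the quadratic dependence that arises from naive applications of $AB\leq \epsilon A^2 + C_\epsilon B^2$.
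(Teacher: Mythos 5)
Your proposal follows essentially the same route as the paper's proof: the Galerkin machinery is carried over from Proposition \ref{prop:W1lin}, and the key energy estimate is obtained by testing with $u_t$, writing $\nabla u(s)=\nabla u_0+\int_0^s\nabla u_t\,d\sigma$, splitting the $a$- and $\nabla a$-contributions exactly as you describe, and handling $\int u_t\,\nabla a\cdot\nabla u$ via $W^{1,q+1}(\Om)\hookrightarrow L^{\infty}(\Om)$, the Poincar\'e split \eqref{Poincare_est2}, and Young's inequality with exponents $\tfrac{q+1}{2}$ and $\tfrac{q+1}{q-1}$ (respectively $H^1\hookrightarrow L^{\infty}$ for $q=1$, $d=1$). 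The losses you track in $\hat{b}$ and $\tilde{b}$ match \eqref{assumption1}--\eqref{cond_b}, so the argument is correct.
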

\begin{proof} 
We will focus on acquiring crucial energy estimates. Testing the problem with $u_t$ and integrating with respect to space and time leads to
\begin{align} \label{W3_est0}
&\frac{1}{2}\left[|u_t(s)|_{L^2(\Om)}^2\right]_0^t
+\int_0^t \Bigl(b(1-\delta)|\nabla u_t(s)|_{L^2(\Om)}^2  \nn \\ 
&+b\delta|\nabla u_t(s)|_{L^{q+1}(\Omega)}^{\qq+1} 
 +\alpha |u_t(s)|^2_{L^2(\hG)} \Bigr)\, ds
\nn \\
=& \ \int_0^t\int_\Omega\Bigl( -a \nabla u_t \cdot \nabla u -u_t\nabla a \cdot \nabla u 
\Bigr) \, dx \, ds+\int_0^t \int_{\Gamma} gu_t \, dx \, ds \nn \\
\leq& \ \int_0^t \Bigl(|a(s)|_{L^{\infty}(\Omega)} |\nabla u_t(s)|_{L^2(\Omega)}
+|\nabla a(s)|_{L^2(\Om)}|u_t(s)|_{L^{\infty}(\Om)}\Bigr) \nn \\
& \cdot \Bigl[|\nabla u_{0}|_{L^2(\Om)}+\sqrt{s\int_0^s|\nabla u_t(\sigma)|_{L^2(\Om)}^{2}\,d\sigma} \Bigr] \, ds+\int_0^t \int_{\Gamma} gu_t \, dx \, ds \nn \\
\leq& \ \Bigl(\|a\|_{L^2(0,t;L^{\infty}(\Om))} \|\nabla u_t\|_{L^2(0,t;L^2(\Om))} \nn \\
&+\|\nabla a\|_{L^2(0,t;L^2(\Om))}\sqrt{\int_0^t | u_t(s)|^2_{L^{\infty}(\Om)}\, ds} \Bigr) \nn \\
& \cdot\Bigl[|\nabla u_{0}|_{L^2(\Om)}+\sqrt{t} \|\nabla u_t\|_{L^2(0,t;L^2(\Om))}\Bigr] +\int_0^t \int_{\Gamma} gu_t \, dx \, ds   \\
\leq& \ \|a\|_{L^2(0,T;L^{\infty}(\Om))} \Bigl( \frac{1}{2} \|\nabla u_t\|_{\LT}^2
+\frac{1}{2} |\nabla u_{0}|_{L^2(\Om)}^2 \nn \\
&+\sqrt{T}\|\nabla u_t\|_{\LT}^2 \Bigr) 
+\|\nabla a\|_{L^2(0,T;L^2(\Omega))} \int_0^T | u_t(s)|^2_{L^{\infty}(\Om)}\, ds \nn \\
&+\|\nabla a\|_{L^2(0,T;L^2(\Omega))}
\Bigl(\frac{1}{2} |\nabla u_{0}|_{L^2(\Om)}^2 
+\frac{1}{2} T\|\nabla u_t\|_{\LT}^2 \Bigr) \nn \\
&+\frac{\epsilon_1}{2} \|\nabla u_{t}\|^{q+1}_{L^{q+1}(0,T;L^{q+1}(\Om))} +\epsilon_0 \|u_{t}\|^2_{L^{\infty}(0,T;L^2(\Om))}
\nn \\
&+C(\tfrac{\epsilon_1}{2},q+1)(C_1^{tr}(1+C_P))^{\frac{q+1}{q}}\|g\|^{\frac{q+1}{q}}_{L^{\frac{q+1}{q}}(0,T;W^{-\frac{q}{q+1},\frac{q+1}{q}}(\Gamma))}  \nn\\
&+\frac{1}{4\epsilon_0}(C_1^{tr} C^{\Om}_2)^2\|g\|^2_{L^1(0,T;W^{-\frac{q}{q+1},\frac{q+1}{q}}(\Gamma))}, \nn
\end{align}
where we have applied \eqref{W1_inequality1} to estimate the boundary integral on the right side.  \\
We can make use of the  embedding $W^{1,q+1}(\Om) \hookrightarrow L^{\infty}(\Om)$ together with the inequality \eqref{Poincare_est2} to obtain
\begin{align*}
& \int_0^T | u_t(s)|^2_{L^{\infty}(\Om)}\, ds \leq (\qCwl)^2 \int_0^T | u_t(s)|^2_{W^{1,q+1}(\Om)}  \, ds \\
\leq& \ (\qCwl)^2 \int_0^t \Bigl((1+C_P)|\nabla u_t(s)|_{L^{q+1}(\Om)}+C_2 ^{\Om} |u_t(s)|_{L^2(\Om)} \Bigr)^2  \, ds \\
\leq& \ 2 (\qCwl)^2 (1+C_P)^2 \int_0^t |\nabla u_t(s)|^2_{L^{q+1}(\Om)} \, ds \nn \\
&+2(\qCwl C^{\Om}_2)^2  \|u_t\|^2_{L^2(0,T;L^2(\Om))},
\end{align*}
and then from \eqref{W3_est0}, for $q>1$, we further get
\begin{align} \label{W3lin_est11}
&\frac{1}{2} \left[|u_{t}|_{L^2(\Om)}^2\right]_0^t
+\int_0^t \Bigl(b(1-\delta)|\nabla u_t(s)|_{L^2(\Om)}^2 
+b\delta|\nabla u_t(s)|_{L^{q+1}(\Om)}^{\qq+1} \, ds\Bigr) \nn \\
&+ \alpha \int_0^t |u_t (s)|^2_{L^2(\hG)} \, ds \nn \\
\leq& \ \|a\|_{L^2(0,T;L^{\infty}(\Om))} \Bigl( (\sqrt{T}+\frac12)\|\nabla u_t\|_{L^2(0,T;L^2(\Om))}^2
+\frac{1}{2} |\nabla u_{0}|_{L^2(\Om)}^2\Bigr) \nn \\
& + \epsilon_1 \|\nabla u_t\|^{q+1}_{L^{q+1}(0,T;L^{q+1}(\Om))} +\epsilon_0 \|u_{t}\|^2_{L^{\infty}(0,T;L^2(\Om))} \nn \\
&+C(\tfrac{\epsilon_1}{2}, \tfrac{q+1}{2})T((\qCwl(1+C_P))^2\|\nabla a\|_{\LT})^{\frac{q+1}{q-1}}  \\
& +2\|\nabla a\|_{\LT}(\qCwl C^{\Om}_2)^2  \|u_t\|^2_{L^2(0,T;L^2(\Om))}  \nn \\
&+\|\nabla a\|_{L^2(0,T;L^2(\Omega))}\Bigl(\frac12 T \|\nabla u_t\|_{L^2(0,T;L^2(\Omega))}^2  
+\frac12 |\nabla u_{0}|_{L^2(\Om)}^2\Bigr)\nn  \\
&+C(\tfrac{\epsilon_1}{2},q+1)(C_1^{tr}(1+C_P))^{\frac{q+1}{q}}\|g\|^{\frac{q+1}{q}}_{L^{\frac{q+1}{q}}(0,T;W^{-\frac{q}{q+1},\frac{q+1}{q}}(\Gamma))} \nn \\
& +\frac{1}{4\epsilon_0}(C_1^{tr} C^{\Om}_2)^2\|g\|^2_{L^1(0,T;W^{-\frac{q}{q+1},\frac{q+1}{q}}(\Gamma))} \,,\nn
\end{align}
for some $\epsilon_0, \epsilon_1 >0$.
\noindent By taking $ \displaystyle \esssup_{[0,T]}$ in \eqref{W3lin_est11} and making $\epsilon_0$ and $\epsilon_1$ small enough
we gain \eqref{W3lin_lower}.
\end{proof}
\begin{proposition} \label{prop:W3linb} 
\noindent  Let $T>0$, $b>0$, $\alpha \geq 0$, $\delta \in (0,1)$ and assume that
\begin{itemize}
\item $a(t,x) \geq \underline{a}>0$,
\item $a \in L^{\infty}(0,T;L^{\infty}(\Om))$, $a_t \in L^2(0,T;L^2(\Om))$, 
$\nabla a \in L^2(0,T;L^4(\Om))$,
\item
 $g \in L^{\infty}(0,T;W^{-\frac{q}{q+1},\frac{q+1}{q}}(\Gamma))$,  $g_t \in L^{\frac{q+1}{q}}(0,T;W^{-\frac{q}{q+1},\frac{q+1}{q}}(\Gamma))$,
\item 
$ u_0 \in W^{1,4}(\Om), u_1 \in W^{1,q+1}(\Om)$,
\item  $q \geq 3$,
\end{itemize}
with 
\begin{equation} \label{W3_a2}
\begin{split}
\tilde{a}:=& \ \frac{\underline{a}}{4}-\frac{1}{2}\|\nabla a\|_{L^2(0,T;L^4(\Om))} >0, \\
 \tilde{b}:=& \ \frac{1}{4}-\|\nabla a\|_{L^2(0,T;L^4(\Omega))}T(C^{\Om}_{L^{q+1},L^4} C_2^{\Om})^2>0,
 \end{split}
\end{equation} 
\noindent then \eqref{W3lin} has a weak solution 
\begin{align} \label{W3_X}
u\in X:=~& C^1(0,T;W^{1,q+1}(\Om)) \cap H^2(0,T;L^2(\Om))\},
\end{align}
\noindent which satisfies the energy estimate
\begin{align}\label{W3lin_higher} 
&\mu\Bigl[\frac{1}{2}-\tau\|\nabla a\|_{L^2(0,T;L^4(\Om))}\Bigr]\|u_{tt}\|_{L^2(0,T;L^2(\Om))}^2  \nn\\
& +\mu \frac{b(1-\delta)}{8} \|\nabla u_t\|_{L^{\infty}(0,T;L^2(\Omega))}^2+ \Bigl[\tilde{b}- \epsilon_0(\mu+1)\Bigr]\|u_t\|^2_{L^{\infty}(0,T;L^2(\Om)}  \nn\\
&+\Bigl[\frac{b(1-\delta)}{2} -\mu \|a\|_{L^{\infty}(0,T;L^{\infty}(\Om))}\Bigr]\|\nabla u_t\|_{L^2(0,T;L^2(\Omega))}^2  \nn
 \\
& +\Bigl[\mu \frac{b\delta}{2(q+1)}-\eta(2\mu+1) \Bigr]\|\nabla u_t\|_{L^{\infty}(0,T;L^{q+1}(\Om))}^{\qq+1}+\mu \frac{\alpha}{4} \|u_t\|^2_{L^{\infty}(0,T;L^2(\hG))}  \nn\\
&+ \Bigl[\tilde{a}-\mu\frac{2}{b(1-\delta)}\|a\|^2_{L^{\infty}(0,T;L^{\infty}(\Om))}\Bigr] \|\nabla u\|^2_{L^{\infty}(0,T;L^2(\Om))} \nn  \\
& +\Bigl[\frac{b\delta}{2}-\epsilon_1(\mu+1)\Bigr]\|\nabla u_t\|_{L^{q+1}(0,T;L^{q+1}(\Om))}^{q+1}+\frac{\alpha}{2}\|u_t\|^2_{L^2(0,T;L^2(\hG))} \\
\leq& \  
\overline{C} \Bigl((T\|\nabla a\|_{L^2(0,T;L^4(\Omega))})^{\frac{\qq+1}{\qq-1}}+\|a\|_{L^{\infty}(0,T;L^{\infty}(\Omega))}|\nabla u_{0}|_{L^2(\Omega)}^2 \nn  \\
&   +(\|a_t\|_{L^{4/3}(0,T;L^2(\Omega))}
+\|\nabla a\|_{L^2(0,T;L^4(\Omega))})  |\nabla u_{0}|_{L^4(\Omega)}^2 \nn \\
&+\|a\|_{L^{\infty}(0,T;L^{\infty}(\Omega))} 
\Bigl(|\nabla u_{0}|_{L^2(\Omega)}^2+|\nabla u_{1}|_{L^2(\Omega)} |\nabla u_{0}|_{L^2(\Omega)}\Bigr) \nn \\
&+((\frac{1}{2}+T^{3/4})\sqrt{T}\|a_t\|_{L^{4/3}(0,T;L^2(\Omega))})^{\frac{\qq+1}{\qq-1}}+(T^{2}\|\nabla a\|_{L^2(0,T;L^4(\Omega))})^{\frac{\qq+1}{\qq-1}}\nn \\
& + (T^{5/2}\|a_t\|_ {L^2(0,T;L^2(\Om))})^{\frac{\qq+1}{\qq-1}}+\|a_t\|_{L^2(0,T;L^2(\Om))}\sqrt{T}|\nabla u_0|^2_{L^4(\Om)}
 \nn \\
&+|u_1|^2_{H^1(\Om)} 
 +|u_1|^{q+1}_{W^{1,q+1}(\Om)} 
 +|u_1|^2_{L^2(\hG)}+ C_{\Gamma}(g)\Bigr),\nn
\end{align}
for some sufficiently small constants $\epsilon_0, \epsilon_1, \eta, \mu, \tau>0$, some large enough $\overline{C}>0$ and $C_{\Gamma}(g)$ defined as in \eqref{Cg}.
\end{proposition}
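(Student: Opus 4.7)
The approach follows the Galerkin scheme used in Proposition \ref{prop:W1lin} together with the lower-order estimate of Proposition \ref{prop:W3lina}; the only novelty is the derivation of the higher-order bound \eqref{W3lin_higher}. The plan is to test the discretized weak form of \eqref{W3lin} with $w_n=u_{n,tt}^{(k)}$ and integrate in time, producing $\|u_{n,tt}^{(k)}\|_{L^2(0,T;L^2(\Omega))}^2$ on the left. The delicate contribution is $\int_0^t\!\int_\Omega a\,\nabla u\cdot\nabla u_{tt}\,dx\,ds$, which I would treat by integration by parts in time, splitting it into an endpoint term $[\int_\Omega a\,\nabla u\cdot\nabla u_t\,dx]_0^t$, a good term $-\int_0^t\!\int_\Omega a|\nabla u_t|^2\,dx\,ds$ that is absorbed into the strong damping, and a remainder $-\int_0^t\!\int_\Omega a_t\,\nabla u\cdot\nabla u_t\,dx\,ds$. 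Combined with the damping identity
\[
\int_\Omega b\bigl((1-\delta)+\delta|\nabla u_t|^{q-1}\bigr)\nabla u_t\cdot\nabla u_{tt}\,dx=\frac{d}{dt}\Bigl[\tfrac{b(1-\delta)}{2}|\nabla u_t|^2_{L^2(\Omega)}+\tfrac{b\delta}{q+1}|\nabla u_t|^{q+1}_{L^{q+1}(\Omega)}\Bigr]
\]
and the boundary identity $\alpha\int_{\hG}u_t u_{tt}\,dx=\tfrac{\alpha}{2}\tfrac{d}{dt}|u_t|^2_{L^2(\hG)}$, this supplies the higher-order quantities appearing on the left of \eqref{W3lin_higher}.

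I would then bound the right-hand side. The endpoint $\int_\Omega a\,\nabla u\cdot\nabla u_t\,dx$ is controlled by Cauchy--Schwarz and Young's inequality with weight $\tfrac{b(1-\delta)}{8}$ using $a\in L^\infty(0,T;L^\infty(\Omega))$; this is the origin of the $\mu\tfrac{2}{b(1-\delta)}\|a\|^2_{L^\infty(L^\infty)}$ coefficient hitting $\|\nabla u\|^2_{L^\infty(L^2)}$ in \eqref{W3lin_higher}. The mixed remainder $\int_0^t\!\int_\Omega a_t\,\nabla u\cdot\nabla u_t\,dx\,ds$ is estimated by H\"older in space with exponents $(2,4,4)$ and in time using $a_t\in L^{4/3}(L^2)$, $\nabla u\in L^\infty(L^4)$, and $\nabla u_t\in L^{q+1}(L^4)\subseteq L^4(L^4)$, the last inclusion relying on the embedding $L^{q+1}(\Omega)\hookrightarrow L^4(\Omega)$ valid for $q\ge 3$. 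The irreducible term $\int_0^t\!\int_\Omega \nabla a\cdot\nabla u\,u_{tt}\,dx\,ds$, which cannot be converted into a time derivative, is bounded via H\"older as $\|\nabla a\|_{L^2(L^4)}\|\nabla u\|_{L^\infty(L^4)}\|u_{tt}\|_{L^2(L^2)}$ and Young's with weight $\tau$, producing the $\tau\|\nabla a\|_{L^2(L^4)}$ coefficient on $\|u_{tt}\|^2$. To control $\|\nabla u\|_{L^\infty(L^4)}$ I would use $\nabla u(t)=\nabla u_0+\int_0^t\nabla u_t(s)\,ds$ together with the same $L^{q+1}\hookrightarrow L^4$ embedding, which is precisely why $u_0\in W^{1,4}(\Omega)$ and $q\ge 3$ are imposed. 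The boundary piece $\int_0^t\!\int_\Gamma g\,u_{tt}\,dx\,ds$ is handled by integration by parts in time as in \eqref{W1_ineq2}, producing the $C_\Gamma(g)$ factors together with the endpoint $|u_1|^{q+1}_{W^{1,q+1}(\Omega)}$ after invoking \eqref{Poincare_est2}, explaining the requirement $u_1\in W^{1,q+1}(\Omega)$.

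Once all contributions are bounded, I apply Young's inequality \eqref{Young} with small parameters $\epsilon_0,\epsilon_1,\eta,\tau>0$ to absorb higher-order quantities on the right back to the left. The lower-order contributions $\|u_t\|_{L^\infty(L^2)}^2$ and $\|\nabla u_t\|_{L^{q+1}(L^{q+1})}^{q+1}$ that remain on the right cannot be absorbed by the higher-order test alone, and I would therefore add $\mu$ times the lower-order estimate \eqref{W3lin_lower}, which produces the $\epsilon_0(\mu+1)$ and $\epsilon_1(\mu+1)$ coefficients seen in \eqref{W3lin_higher}. Choosing $\mu$ small enough to ensure $\mu\tfrac{2}{b(1-\delta)}\|a\|^2_{L^\infty(L^\infty)}<\tilde a$ and $\mu\|a\|_{L^\infty(L^\infty)}<\tfrac{b(1-\delta)}{2}$, and using the positivity conditions \eqref{W3_a2}, yields \eqref{W3lin_higher} for the Galerkin approximants with $\overline C$ independent of $n,k$; passing to the limits $n\to\infty$ and $k\to\infty$ proceeds as in steps 1(c) and 2 of the proof of Proposition \ref{prop:W1lin}. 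The main obstacle is the cross term $\int\nabla a\cdot\nabla u\,u_{tt}$: it is what forces the assumptions $q\ge 3$, $u_0\in W^{1,4}(\Omega)$, $\nabla a\in L^2(L^4)$, and the stronger $a_t\in L^{4/3}(L^2)$ in place of the $L^\infty(L^2)$ norm that sufficed in Proposition \ref{prop:W1lin}.
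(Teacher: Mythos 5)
Your treatment of the test with $u_{tt}$ matches the paper's: integration by parts in time of $\int_0^t\int_\Omega a\,\nabla u\cdot\nabla u_{tt}$, H\"older with exponents $(4,4,2)$ and Young with weight $\tau$ for $\int_0^t\int_\Omega u_{tt}\,\nabla a\cdot\nabla u$, the $(2,4,4)$/$L^{4/3}$--$L^4$--$L^4$ splitting for the $a_t$ remainder, control of $\|\nabla u\|_{L^{\infty}(0,T;L^4(\Om))}$ through $\nabla u_0+\int_0^t\nabla u_t$ and $L^{q+1}(\Om)\hookrightarrow L^4(\Om)$, and the boundary term via \eqref{W1_ineq2}. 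However, there is a genuine gap in the companion lower-order estimate. You propose to add (a multiple of) \eqref{W3lin_lower} from Proposition \ref{prop:W3lina}, but the left-hand side of \eqref{W3lin_lower} contains no term $\|\nabla u\|^2_{L^{\infty}(0,T;L^2(\Om))}$ — in that proposition $-a\Delta u$ is treated entirely as a forcing term and there is not even a positivity assumption on $a$. Consequently the quantity $\mu\frac{2}{b(1-\delta)}\|a\|^2_{L^{\infty}(0,T;L^{\infty}(\Om))}\|\nabla u\|^2_{L^{\infty}(0,T;L^2(\Om))}$ produced by your endpoint term $\bigl[\int_\Omega a\,\nabla u\cdot\nabla u_t\,dx\bigr]_0^t$ has nothing to be absorbed into, and the argument does not close. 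The paper resolves this by re-testing with $u_t$ in a different way (estimate \eqref{W3lin_est4_0}): it keeps $\frac12|\sqrt{a}\,\nabla u|^2_{L^2(\Om)}$ inside the energy, which is where the hypothesis $a\geq\underline{a}>0$ enters, where the coefficient $\tilde{a}=\frac{\underline{a}}{4}-\frac12\|\nabla a\|_{L^2(0,T;L^4(\Om))}$ comes from, and why $\tilde{b}$ is redefined in \eqref{W3_a2} (the cross term $\int_0^t\int_\Omega u_t\,\nabla a\cdot\nabla u$ is now estimated through $\|\nabla a\|_{L^2(0,T;L^4(\Om))}$ and $L^{q+1}(\Om)\hookrightarrow L^4(\Om)$ rather than through $\|\nabla a\|_{L^2(0,T;L^2(\Om))}$ and $W^{1,q+1}(\Om)\hookrightarrow L^{\infty}(\Om)$). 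Your proposal is silent on this modification, and without it the condition $\mu\frac{2}{b(1-\delta)}\|a\|^2_{L^{\infty}(L^{\infty})}<\tilde{a}$ that you invoke has no source on the left-hand side.

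Two smaller points. First, the weight $\mu$ must multiply the \emph{higher}-order ($u_{tt}$-tested) inequality, not the lower-order one as you state: the bad contributions $\|a\|_{L^{\infty}(L^{\infty})}\|\nabla u_t\|^2_{L^2(0,T;L^2(\Om))}$ and the $\|\nabla u\|^2_{L^{\infty}(L^2)}$ term come from the $u_{tt}$ test and must be made small relative to the unscaled coercive terms of the $u_t$ test; your own smallness conditions ($\mu\|a\|_{L^{\infty}(L^{\infty})}<\frac{b(1-\delta)}{2}$, etc.) are the ones corresponding to the correct placement, so this is presumably a slip, but with your stated convention those conditions would instead force $\mu$ to be large. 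Second, the term $-\int_0^t\int_\Omega a|\nabla u_t|^2$ arising from the integration by parts is not a ``good'' term: it sits with the wrong sign and is precisely the origin of the loss $-\mu\|a\|_{L^{\infty}(0,T;L^{\infty}(\Om))}$ in the coefficient of $\|\nabla u_t\|^2_{L^2(0,T;L^2(\Om))}$ in \eqref{W3lin_higher}; it cannot be absorbed by the $[\frac{b(1-\delta)}{2}|\nabla u_t|^2_{L^2(\Om)}]_0^t$ produced by the same test, only by the $L^2(0,T;L^2(\Om))$ damping coming from the lower-order estimate.
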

\begin{proof}
In order to obtain higher order estimate, we  will multiply \eqref{W3lin} first by $u_t$, proceeding differently than in Proposition \ref{prop:W3lina}, and then by $u_{tt}$ and combine the two obtained estimates. Multiplication by $u_t$ and integration with respect to space and time produces
\begin{align} \label{W3lin_est3}
&\frac{1}{2}\left[|u_t(s)|_{L^2(\Omega)}^2+|\sqrt{a}\nabla u|_{L^2(\Omega)}^2\right]_0^t +\int_0^t \Bigl(b(1-\delta)|\nabla u_t(s)|_{L^2(\Omega)}^2 \nn \\
&+b\delta|\nabla u_t(s)|_{L^{q+1}(\Omega)}^{\qq+1}\Bigr)\, ds  
  + \alpha \int_0^t  |u_t(s)|^2_{L^2(\hG)}  \, ds \nn \\
=& \ \int_0^t\int_\Omega\Bigl( \frac12 a_t |\nabla u|^2 -u_t\nabla a \cdot \nabla u 
\Bigr) \, dx \, ds+\int_0^t \int_{\Gamma} gu_t \, dx \, ds  \nn \\
\leq& \ \int_0^t\int_\Omega  \frac12 a_t |\nabla u|^2  \, dx \, ds
+\|\nabla a\|_{L^2(0,T;L^4(\Omega))}\Bigl(\frac{T}{2} \|u_t\|_{L^{\infty} (0,T;L^4(\Omega))}^2  \\
&+\frac12 \|\nabla u\|_{L^{\infty}(0,T;L^2(\Omega))}^2\Bigr) 
+\epsilon_1 \|\nabla u_{t}\|^{q+1}_{L^{q+1}(0,T;L^{q+1}(\Om))} \nn \\
&+C(\epsilon_1,q+1)(C_1^{tr}(1+C_P))^{\frac{q+1}{q}}\|g\|^{\frac{q+1}{q}}_{L^{\frac{q+1}{q}}(0,T;W^{-\frac{q}{q+1},\frac{q+1}{q}}(\Gamma))} \nn \\
& +\epsilon_0 \|u_{t}\|^2_{L^{\infty}(0,T;L^2(\Om)}+\frac{1}{4\epsilon_0}(C_1^{tr} C^{\Om}_2)^2\|g\|^2_{L^1(0,T;W^{-\frac{q}{q+1},\frac{q+1}{q}}(\Gamma))},\nn
\end{align}
for some $\epsilon_0$, $\epsilon_1>0$. We will make use of the embedding $L^{q+1}(\Om) \hookrightarrow L^4(\Om)$ and Young's inequality \eqref{Young} to estimate
\begin{equation}   \label{W3lin_a}
\begin{split}
& \frac{1}{2}\int_0^t\int_\Omega a_t |\nabla u|^2 \, dx \, ds \\
 \leq& \ \frac{1}{2} \|\nabla u\|^2 _{L^{\infty}(0,T;L^4(\Om))} \int_0^t |a_t|_{L^2(\Om)} \, ds    \\
\leq& \  \frac{1}{2} \|a_t\|_{L^2(0,T;L^2(\Om))} \sqrt{T} \|\nabla u\|^2 _{L^{\infty}(0,T;L^4(\Om))}   \\
\leq& \ \|a_t\|_{L^2(0,T;L^2(\Om))} \sqrt{T} \Bigl[  T^2 \|\nabla u_t\|^2_{L^{\infty}(0,T;L^4(\Om))}+|\nabla u_0|^2_{L^4(\Om)} \Bigr]    \\
\leq& \ \|a_t\|_{L^2(0,T;L^2(\Om))} \sqrt{T} \Bigl[  T^2 (C^{\Om}_{L^{q+1},L^4})^2 \|\nabla u_t\|^2_{L^{\infty}(0,T;L^{q+1}(\Om))}+|\nabla u_0|^2_{L^4(\Om)} \Bigr]   \\
\leq& \ \frac{\eta}{2} \|\nabla u_t\|^{q+1}_{L^{\infty}(0,T;L^{q+1}(\Om))} + \|a_t\|_{L^2(0,T;L^2(\Om))} \sqrt{T} |\nabla u_0|^2_{L^4(\Om)} \\
&  +C(\tfrac{\eta}{2}, \tfrac{q+1}{2})(\|a_t\|_{L^2(0,T;L^2(\Om))}T^{5/2} (C^{\Om}_{L^{q+1},L^4})^2 )^{\frac{q+1}{q-1}} ,
\end{split}
\end{equation}
for some $\eta>0$ and $q>1$. We can also obtain
\begin{align*}
& \|\nabla a\|_{L^2(0,T;L^4(\Omega))}\frac{T}{2}\|u_t\|^2_{L^{\infty}(0,T;L^4(\Om))} \nn \\
\leq& 
 \  \|\nabla a\|_{L^2(0,T;L^4(\Omega))}\frac{T}{2} (C^{\Om}_{L^{q+1},L^4})^2\|u_t\|^{2}_{L^{\infty}(0,T;L^{q+1}(\Om))} \nn \\
\leq& \ \|\nabla a\|_{L^2(0,T;L^4(\Omega))}T (C^{\Om}_{L^{q+1},L^4})^2\Bigl[C_P^2\|\nabla u_t\|^{2}_{L^{\infty}(0,T;L^{q+1}(\Om))}\nn \\
&+(C_2^{\Om})^2\|u_t\|^{2}_{L^{\infty}(0,T;L^2(\Om))}\Bigr] \\
 \leq& \ \frac{\eta}{2} \|\nabla u_t\|^{q+1}_{L^{\infty}(0,T;L^{q+1}(\Om))}+C(\tfrac{\eta}{2}, \tfrac{q+1}{2})((C_PC^{\Om}_{L^{q+1},L^4})^2\|\nabla a\|_{L^2(0,T;L^4(\Omega))}T)^{\frac{q+1}{q-1}}\\
&+\|\nabla a\|_{L^2(0,T;L^4(\Omega))}T(C^{\Om}_{L^{q+1},L^4}C_2^{\Om})^2\|u_t\|^{2}_{L^{\infty}(0,T;L^2(\Om))},
\end{align*}
which together with  \eqref{W3lin_a} results in the following estimate:
\begin{align} \label{W3lin_est4_0}
&(\tilde{b}-\epsilon_0)\|u_t\|_{L^{\infty}(0,T;L^2(\Omega))}^2  +\tilde{a}\|\nabla u\|^2_{L^{\infty}(0,T;L^2(\Om))} +\frac{\alpha}{2} \|u_t\|^2_{L^2(0,T;L^2(\hG))} \nn \\
& +\frac{b(1-\delta)}{2}\|\nabla u_t\|_{L^2(0,T;L^2(\Omega))}^2+(\frac{b\delta}{2}-\epsilon_1)\|\nabla u_t\|_{L^{q+1}(0,T;L^{q+1}(\Omega))}^{\qq+1} \nn
\\
\leq& \ \eta \|\nabla u_t\|^{q+1}_{L^{\infty}(0,T;L^{q+1}(\Om))}+ \|a_t\|_{L^2(0,T;L^2(\Om))} \sqrt{T} |\nabla u_0|^2_{L^4(\Om)} \nn \\
&+C(\tfrac{\eta}{2}, \tfrac{q+1}{2})((C_PC^{\Om}_{L^{q+1},L^4})^2\|\nabla a\|_{L^2(0,T;L^4(\Omega))}T)^{\frac{q+1}{q-1}} \\
& +C(\tfrac{\eta}{2}, \tfrac{q+1}{2})(\|a_t\|_{L^2(0,T;L^2(\Om))} T^{5/2} (C^{\Om}_{L^{q+1},L^4})^2 )^{\frac{q+1}{q-1}} \nn \\
& +\frac12 \|a\|_{L^{\infty}(0,T;L^{\infty}(\Omega))}|\nabla u_{0}|_{L^2(\Omega)}^2 
+\frac{1}{4 \epsilon_0} \|g\|^2_{L^1(0,T;W^{-\frac{q}{q+1},\frac{q+1}{q}}(\Gamma))}\nn \\
&+\frac12|u_{1}|_{L^2(\Omega)}^2+ C(\epsilon_1,q+1)(C_1^{tr}(1+C_P)\|g\|_{L^{\frac{q+1}{q}}(0,T;W^{-\frac{q}{q+1},\frac{q+1}{q}}(\Gamma))})^{\frac{q+1}{q}}. \nn
\end{align}
Testing with $u_{tt}$ yields
\begin{align*}
&\int_0^t |u_{tt}(s)|_{L^2(\Omega)}^2\, ds
+\left[\frac{b(1-\delta)}{2}|\nabla u_t|_{L^2(\Om)}^2 
+\frac{b\delta}{\qq+1}|\nabla u_t|_{L^{q+1}(\Omega)}^{\qq+1}+\frac{\alpha}{2}|u_t|^2_{L^2(\hG)}
\right]_0^t
\\
=& \ \int_0^t\int_\Omega\left( -a \nabla u_{tt} \cdot \nabla u -u_{tt} \nabla a \cdot \nabla u
\right) \, dx \, ds+\int_0^t \int_{\Gamma} g u_{tt} \, dx \, ds\\
=& \ \int_0^t\int_\Omega\left( a_t \nabla u_t \cdot \nabla u +a|\nabla u_t|^2
-u_{tt}\nabla a \cdot \nabla u \right)dx\, ds-\left[\int_\Omega  a \nabla u_t \cdot \nabla u \,dx\right]_0^t \\
&+\int_0^t \int_{\Gamma} g u_{tt} \, dx \, ds\\
\leq& \
\int_0^t\left( |a_t(s)|_{L^2(\Omega)} |\nabla u_t(s)|_{L^4(\Omega)}
+|\nabla a(s)|_{L^4(\Omega)}|u_{tt}(s)|_{L^2(\Omega)} \right) \\
& \cdot \Bigl[|\nabla u_{0}|_{L^4(\Omega)}+\sqrt[4]{s^3\int_0^s|\nabla u_{t}(\sigma)|_{L^4(\Omega)}^{4}\,d\sigma}\Bigr] \, ds
\\
&+\|a\|_{L^{\infty}(0,T;L^{\infty}(\Omega))} \Bigl(|\nabla u_t(t)|_{L^2(\Omega)}
|\nabla u(t)|_{L^2(\Omega)}+|\nabla u_{1}|_{L^2(\Omega)} |\nabla u_{0}|_{L^2(\Omega)}\nn \\
& +\|\nabla u_{t}\|_{L^2(0,T;L^2(\Omega))}^2\Bigr) 
+\int_0^t \int_{\Gamma} g u_{tt} \, dx \, ds\\
\leq& \
\|a_t\|_{L^{4/3}(0,T;L^2(\Omega))} \left((\frac12+T^{\frac{3}{4}})\|\nabla u_t\|_{L^4(0,T;L^4(\Omega))}^2+\frac12|\nabla u_{0}|_{L^4(\Omega)}^2\right) \\
&+\|\nabla a\|_ {L^2(0,T;L^4(\Om))} ( \tau \|u_{tt}\|_{L^2(0,T;L^2(\Omega))}^2
+\frac{1}{2 \tau} |\nabla u_{0}|_{L^4(\Omega)}^2 \nn \\
&+\frac{1}{2 \tau} T^{\frac{3}{2}} \|\nabla u_t\|_{L^4(0,T;L^4(\Omega))}^2 )
+\|a\|_{L^{\infty}(0,T;L^{\infty}(\Omega))} \Bigl(|\nabla u_{1}|_{L^2(\Omega)} |\nabla u_{0}|_{L^2(\Omega)} \nn \\
&+\|\nabla u_{t}\|_{L^2(0,T;L^2(\Omega))}^2\Bigr)
+\frac{2}{b(1-\delta)}\|a\|_{L^{\infty}(0,T;L^{\infty}(\Omega))}^2|\nabla u(t)|_{L^2(\Omega)}^2 \nn \\
&+\frac{b(1-\delta)}{8}|\nabla u_{t}(t)|_{L^2(\Omega)}^2+\int_0^t \int_{\Gamma} g u_{tt} \, dx \, ds,
\end{align*}
for some $\tau>0$. We can make use of Young's inequality and the inequality \eqref{W1_ineq2} for the boundary integral together with the inequality $\|\nabla u_t\|_{L^4(0,T;L^4(\Om))} \leq T^{\frac{1}{4}}\|\nabla u_t\|_{L^{\infty}(0,T;L^4(\Om))} \leq T^{\frac{1}{4}}C^{\Om}_{L^{q+1},L^4}\|\nabla u_t\|_{L^{\infty}(0,T;L^{q+1}(\Om))}$ to obtain
\begin{align*}
&\int_0^t |u_{tt}(s)|_{L^2(\Omega)}^2\, ds
+\left[\frac{b(1-\delta)}{2}|\nabla u_t|_{L^2(\Om)}^2 
+\frac{b\delta}{\qq+1}|\nabla u_t|_{L^{q+1}(\Omega)}^{\qq+1}+\frac{\alpha}{2}|u_t|^2_{L^2(\hG)}
\right]_0^t
\\
\leq& \
\|a_t\|_{L^{4/3}(0,T;L^2(\Omega))}\frac12|\nabla u_{0}|_{L^4(\Omega)}^2+\frac{\eta}{2}\|\nabla u_t\|_{L^{\infty}(0,T;L^{q+1}(\Omega))}^{\qq+1} \nn \\
& +C(\tfrac{\eta}{2},\tfrac{q+1}{2})((\frac12+T^{\frac{3}{4}})\sqrt{T}(C^{\Om}_{L^{q+1},L^4})^2\|a_t\|_{L^{4/3}(0,T;L^2(\Omega))})^{\frac{\qq+1}{\qq-1}} \\
& +\|\nabla a\|_{L^2(0,T;L^4(\Omega))} \Bigl(\tau\|u_{tt}\|_{L^2(0,T;L^2(\Omega))}^2+\frac{1}{2\epsilon} |\nabla u_{0}|_{L^4(\Omega)}^2\Bigr) \\
&+\frac{\eta}{2}\|\nabla u_{t}\|_{L^{\infty}(0,T;L^{q+1}(\Omega))}^{\qq+1}+C(\tfrac{\eta}{2},\tfrac{q+1}{2})(\frac{1}{2 \tau }T^{2}(C^{\Om}_{L^{q+1},L^4})^2\|\nabla a\|_{L^2(0,T;L^4(\Omega))})^{\frac{\qq+1}{\qq-1}}\\
&+\|a\|_{L^{\infty}(0,T;L^{\infty}(\Omega))} \Bigl(|\nabla u_{1}|_{L^2(\Omega)} |\nabla u_{0}|_{L^2(\Omega)}
+\|\nabla u_{t}\|_{L^2(0,T;L^2(\Omega))}^2\Bigr) \\
&+\frac{2}{b(1-\delta)}\|a\|_{L^{\infty}(0,T;L^{\infty}(\Omega))}^2|\nabla u(t)|_{L^2(\Omega)}^2 +\frac{b(1-\delta)}{8}|\nabla u_{t}(t)|_{L^2(\Omega)}^2 \nn \\
&+\eta \|\nabla u_{t}\|^{q+1}_{L^{\infty}(0,T;L^{q+1}(\Om))}+C(\eta, q+1)(C_1^{tr}(1+C_P))^{\frac{q+1}{q}}\|g\|^{\frac{q+1}{q}}_{L^{\infty}(0,T;W^{-\frac{q}{q+1},\frac{q+1}{q}}(\Gamma))} \nn \\
&+ \epsilon_0 \|u_{t}\|^{2}_{L^{\infty}(0,T;L^2(\Om))}+\frac{1}{2\epsilon_0}(C_1^{tr}C_2^{\Om})^2\|g\|^2_{L^{\infty}(0,T;W^{-\frac{q}{q+1},\frac{q+1}{q}}(\Gamma))}\nn \\
& +|u_{1}|^{q+1}_{W^{1,q+1}(\Omega)} +C(1,q+1)(C_1^{tr}|g(0)|_{W^{-\frac{q}{q+1},\frac{q+1}{q}}(\Gamma))})^{\frac{q+1}{q}}\nn \\
&  +C(\epsilon_1,q+1)(C_1^{tr}(1+C_P))^{\frac{q+1}{q}}\|g_t\|^{\frac{q+1}{q}}_{L^{\frac{q+1}{q}}(0,T;W^{-\frac{q}{q+1},\frac{q+1}{q}}(\Gamma))} \nn \\
& +\epsilon_1 \|\nabla u_{t}\|^{q+1}_{L^{q+1}(0,T;L^{q+1}(\Om))}+\frac{1}{2\epsilon_0}(C_1^{tr} C^{\Om}_2)^2\|g_t\|^2_{L^1(0,T;W^{-\frac{q}{q+1},\frac{q+1}{q}}(\Gamma))},
\end{align*}
which, by taking essential supremum with respect to $t$ and then adding $\mu$ times obtained inequality to \eqref{W3lin_est4_0}  results in the higher order estimate \eqref{W3lin_higher}.
\end{proof}
Let us now consider the problem with the added lower order nonlinear damping term
\begin{equation}\label{W3lin_gamma}
\begin{cases}
u_{tt}-a\Delta u
-b\,\text{div}\Bigl(((1-\delta) +\delta|\nabla u_t|^{\qq-1})\nabla u_t\Bigr)
\vspace{2mm} \\
+\gamma|u_t|^{\qq-1}u_t
=0 \, \text{ in } \Omega \times (0,T],
\vspace{2mm} \\
a \frac{\partial u}{\partial n}+b((1-\delta)+\delta|\nabla u_t|^{q-1})\frac{\partial u_t}{\partial n}=g  \ \ \text{on} \, \Gamma \times (0,T],
\vspace{2mm} \\
\alpha u_t+a \frac{\partial u}{\partial n}+b((1-\delta)+\delta|\nabla u_t|^{q-1})\frac{\partial u_t}{\partial n}=0  \ \ \text{on} \, \hat{\Gamma} \times (0,T],
\vspace{2mm}
\\
(u,u_t)=(u_0, u_1) \ \ \text{on} \ \overline{\Om}\times \{t=0\},
\end{cases}
\end{equation}
where $\gamma>0$. This is a linearized version of \eqref{W3_gamma}, where nonlinearity appears only through the damping terms. We can utilize the embedding $W^{1,q+1}(\Om) \hookrightarrow L^{\infty}(\Om)$, Young's inequality in the form \eqref{Young} and estimate the boundary integral by employing \eqref{W1_boundary_gamma1}, to obtain
\begin{align} \label{W3_est1}
&\frac{1}{2} \left[|u_{t}(s)|_{L^2(\Om)}^2\right]_0^t
+\int_0^t \Bigl(b(1-\delta)|\nabla u_t(s)|_{L^2(\Om)}^2 
+b\delta|\nabla u_t(s)|_{L^{q+1}(\Om)}^{\qq+1} \nn \\ 
& +\gamma |u_t(s)|^{q+1}_{L^{q+1}(\Om)}  + \alpha |u_t(s)|^2_{L^2(\hG)} \Bigr)\, ds  \nn \\
\leq& \ \|a\|_{L^2(0,T;L^{\infty}(\Om))} \Bigl( \sqrt{T}\|\nabla u_t\|_{\LT}^2 
+\frac12 \|\nabla u_t\|_{\LT}^2 \\
&+\frac12 |\nabla u_{0}|_{L^2(\Om)}^2\Bigr) 
+\epsilon_0 \int_0^T | u_t(s)|^{q+1}_{W^{1,q+1}(\Om)}\, ds \nn \\ &+C(\tfrac{\epsilon_0}{2}, \tfrac{q+1}{2})T((C^{\Om}_{W^{1,q+1},L^{\infty}})^2\|\nabla a\|_{L^2(0,T;L^2(\Omega))})^{\frac{q+1}{q-1}} \nn  \\
&+\|\nabla a\|_{L^2(0,T;L^2(\Omega))}
\Bigl(\frac{1}{2} T\|\nabla u_t\|_{\LT}^2 +\frac{1}{2} |\nabla u_{0}|_{L^2(\Om)}^2\Bigr)    \nn \\
& +C(\tfrac{\epsilon_0}{2}, \tfrac{q+1}{2})(C_1^{tr} \|g\|_{L^{\frac{q+1}{q}}(0,T;W^{-\frac{q}{q+1},\frac{q+1}{q}}(\Gamma))})^{\frac{q+1}{q}}, \nn
\end{align}
for some $\epsilon_0 >0$ and $q>1$, $q>d-1$. By taking $ \displaystyle \esssup_{[0,T]}$ in \eqref{W3_est1} we get 
\begin{equation} \label{W3lin_lower_gamma}
\begin{split}
&\hat{b}\|\nabla u_t\|^2_{\LT} +\frac{1}{4}\|u_{t}\|^2_{\LiT} +\frac{\alpha}{2}\| u_t\|^2_{L^2(0,T;L^2(\hG))} 
 \\
&+(\frac{b\delta}{2}-\epsilon_0)\|\nabla u_t\|^{q+1}_{\LqT}  +(\frac{\gamma}{2}-\epsilon_0)\| u_t\|^{q+1}_{\LqT} \\
\leq& \ \Bigl(\|a\|_{L^2(0,T;L^{\infty}(\Omega))} +\|\nabla a\|_{L^2(0,T;L^2(\Omega))}\Bigr)\frac12 |\nabla u_{0}|_{L^2(\Om)}^2 +\frac{1}{2}|u_1|^2_{L^2(\Om)} \\
&+ C(\tfrac{\epsilon_0}{2}, \tfrac{q+1}{2})T((C^{\Om}_{W^{1,q+1},L^{\infty}})^2\|\nabla a\|_{\LiT})^{\frac{q+1}{q-1}}  \\
& +C(\tfrac{\epsilon_0}{2}, \tfrac{q+1}{2})(C_1^{tr} \|g\|_{L^{\frac{q+1}{q}}(0,T;W^{-\frac{q}{q+1},\frac{q+1}{q}}(\Gamma))})^{\frac{q+1}{q}} , 
\end{split}
\end{equation}
for some $ 0< \epsilon_0 < \frac{1}{2} \min \{b \delta, \gamma\}$ and $\hat{b}>0$ defined as in \eqref{assumption1}.\\
Note that the addition of the lower order damping term allows us to remove the second assumption in \eqref{assumption1} on smallness of $a$. \\
In the case of $q=1$ (and $d=1$), $u$ satisfies
\begin{equation} \label{W3lin_lower_gamma_q=1}
\begin{split}
&\Bigl(\hat{b}-\epsilon_0 \Bigr)\|\nabla u_t\|^2_{L^2(0,T;L^2(\Om))}+\frac{1}{4}\|u_t\|_{L^{\infty}(0,T;L^2(\Omega)}^2 \\
& +\Bigl(\tilde{b}-\epsilon_0\Bigr)\|u_t\|^2_{L^2(0,T;L^2(\Om))} +\frac{\alpha}{2}\|u_t\|^2_{L^2(0,T;L^2(\hG))}  \\
\leq& \ \Bigl(\|a\|_{L^2(0,T;L^{\infty}(\Omega))} + \|\nabla a\|_{L^2(0,T;L^2(\Omega))}\Bigr)\frac12 |\nabla u_{0}|_{L^2(\Om)}^2 +\frac{1}{2}|u_1|^2_{L^2(\Om)} \\
&+\frac{1}{4 \epsilon_0}(C_2^{tr})^2\|g\|^2_{L^2(0,T;H^{-1/2}(\Gamma))},
\end{split}
\end{equation}
where $\tilde{b}:= \ \frac{\gamma}{2}-(C^{\Om}_{H^1,L^{\infty}})^2\|\nabla a\|_{L^2(0,T;L^2(\Omega))}>0$, and $\hat{b}$ is defined as in \eqref{cond_b}.\\
To obtain higher order estimate, we test the problem again by $u_t$ and integrate with respect to space and time to obtain
\begin{align} \label{W3lin_est3}
&\frac12\left[|u_t|_{L^2(\Omega)}^2+|\sqrt{a}\nabla u|_{L^2(\Omega)}^2\right]_0^t +\int_0^t \Bigl(b(1-\delta)|\nabla u_t|_{L^2(\Omega)}^2 
+b\delta|\nabla u_t|_{L^{q+1}(\Omega)}^{\qq+1} \nn \\
&+\gamma|u_t|_{L^{q+1}(\Omega)}^{q+1}\Bigr)\, ds 
 + \alpha \int_0^t \int_{\hG} |u_t|^2_{L^2(\hG)} \, dx \, ds \nn \\
=& \ \int_0^t\int_\Omega\Bigl( \frac12 a_t |\nabla u|^2 -u_t\nabla a \cdot \nabla u 
\Bigr) \, dx \, ds+\int_0^t \int_{\Gamma} gu_t \, dx \, ds  \\
&\leq \int_0^t\int_\Omega  \frac12 a_t |\nabla u|^2  \, dx \, ds
+\|\nabla a\|_{L^2(0,T;L^4(\Omega))}\Bigl(\frac{T}{2} \|u_t\|_{L^{\infty} (0,T;L^4(\Omega))}^2  \nn \\
&+\frac12 \|\nabla u\|_{L^{\infty}(0,T;L^2(\Omega))}^2\Bigr) 
+\epsilon_1 \| u_t\|^{q+1}_{L^{q+1}(0,T;W^{1,q+1}(\Om))}\, ds  \nn \\
&\quad+C(\epsilon_1, \tfrac{q+1}{2})(C_1^{tr} \|g\|_{L^{\frac{q+1}{q}}(0,T;W^{-\frac{q}{q+1},\frac{q+1}{q}}(\Gamma))})^{\frac{q+1}{q}}. \nn
\end{align}
Taking $\displaystyle \esssup_{[0,T]}$ in \eqref{W3lin_est3} and making use of \eqref{W3lin_a} and
\begin{align*}
& \|\nabla a\|_{L^2(0,T;L^4(\Omega))}\frac{T}{2}\|u_t\|^2_{L^{\infty}(0,T;L^4(\Om))} \nn \\
\leq& \ \eta \|u_t\|^{q+1}_{L^{\infty}(0,T;L^{q+1}(\Om))} +C(\eta, \tfrac{q+1}{2})((C^{\Om}_{L^{q+1},L^4})^2\|\nabla a\|_{L^2(0,T;L^4(\Omega))}T)^{\frac{q+1}{q-1}},
\end{align*}
leads to the estimate
\begin{align} \label{W3lin_est4}
&\frac{1}{4}\|u_t\|_{L^{\infty}(0,T;L^2(\Omega))}^2+\Bigl(\frac{\ul{a}}{4}-\frac{1}{2}\|\nabla a\|_{L^2(0,T;L^4(\Om))})\Bigr)\|\nabla u\|^2_{L^{\infty}(0,T;L^2(\Om))} \nn  \\
& +\frac{b(1-\delta)}{2}\|\nabla u_t\|_{L^2(0,T;L^2(\Omega))}^2 
+(\frac{b\delta}{2}-\epsilon_1)\|\nabla u_t\|_{L^{q+1}(0,T;L^{q+1}(\Omega))}^{\qq+1} \nn \\
&+(\frac{\gamma}{2}-\epsilon_1) \|u_t\|_{L^{q+1}(0,T;L^{q+1}(\Omega))}^{\qq+1} +\frac{\alpha}{2} \|u_t\|^2_{L^2(0,T;L^2(\hG))}
\nn \\
\leq& \ \eta \|u_t\|_{L^{\infty}(0,T;W^{1,q+1}(\Omega))}^{\qq+1}  +C(\eta, \tfrac{q+1}{2})(\|a_t\|_{L^2(0,T;L^2(\Om))} T^{5/2} (C^{\Om}_{L^{q+1},L^4})^2 )^{\frac{q+1}{q-1}}  \\
& + \|a_t\|_{L^2(0,T;L^2(\Om))} \sqrt{T} |\nabla u_0|^2_{L^4(\Om)}+\frac12 \|a\|_{L^{\infty}(0,T;L^{\infty}(\Omega))}|\nabla u_{0}|_{L^2(\Omega)}^2 \nn  \\
& +C(\eta,\tfrac{\qq+1}{2}) (T(C^{\Om}_{L^{q+1},L^4})^2\|\nabla a\|_{L^2(0,T;L^4(\Omega))})^{\frac{\qq+1}{\qq-1}}
+\frac12|u_{1}|_{L^2(\Omega)}^2 \nn \\
& +C(\tfrac{\epsilon_1}{2}, \tfrac{q+1}{2})(C_1^{tr} \|g\|_{L^{\frac{q+1}{q}}(0,T;W^{-\frac{q}{q+1},\frac{q+1}{q}}(\Gamma))})^{\frac{q+1}{q}}, \nn
\end{align}
for some $\eta >0$. \\
Testing with $u_{tt}$ and proceeding as in the case of $\gamma=0$, with the use of \eqref{W1_boundary_gamma2} for the estimation of the boundary integral, results in the higher order energy estimate
\begin{align} \label{W3lin_higher_gamma}
&\mu\Bigl(\frac{1}{2}-\tau \|\nabla a\|_{L^2(0,T;L^4(\Om))}\Bigr)\|u_{tt}\|_{L^2(0,T;L^2(\Om))}^2+ \frac{1}{4} \|u_t\|^2_{L^{\infty}(0,T;L^2(\Om)} \nn \\ &+\Bigl(\frac{b\delta}{2}-\epsilon_1(\mu+1)\Bigr)\|\nabla u_t\|_{L^{q+1}(0,T;L^{q+1}(\Om))}^{q+1} \nn \\
&+\Bigl(\frac{b(1-\delta)}{2} -\mu \|a\|_{L^{\infty}(0,T;L^{\infty}(\Om))}\Bigr)\|\nabla u_t\|_{L^2(0,T;L^2(\Omega))}^2   \nn \\
&  +\Bigl(\mu \frac{b\delta}{2(q+1)}-\mu(\eta+1)\Bigr)\|\nabla u_t\|_{L^{\infty}(0,T;L^{q+1}(\Om))}^{\qq+1}+\frac{\alpha}{2}\|u_t\|^2_{L^2(0,T;L^2(\hG))} \nn \\
&+ \Bigl(\tilde{a}-\mu\frac{2}{b(1-\delta)}\|a\|^2_{L^{\infty}(0,T;L^{\infty}(\Om))}\Bigr)\|\nabla u\|^2_{L^{\infty}(0,T;L^2(\Om))} \nn \\
& +(\frac{\gamma}{2}-\mu(\epsilon_1+1))\|u_t\|^{q+1}_{L^{q+1}(0,T;L^{q+1}(\Om)}  +\mu \frac{b(1-\delta)}{8} \|\nabla u_t\|_{L^{\infty}(0,T;L^2(\Omega))}^2  \\
&+\Bigl(\mu \frac{\gamma}{2(q+1)}-\eta(2\mu+1)\Bigr)\|u_t\|^{q+1}_{L^{\infty}(0,T;L^{q+1}(\Om))} +\mu \frac{\alpha}{2} \|u_t\|^2_{L^{\infty}(0,T;L^2(\hG))} \nn \\
\leq& \  
\overline{C} \Bigl( (T\|\nabla a\|_{L^2(0,T;L^4(\Omega))})^{\frac{\qq+1}{\qq-1}}+\|a\|_{L^{\infty}(0,T;L^{\infty}(\Omega))}|\nabla u_{0}|_{L^2(\Omega)}^2  \nn \\
&   +(\|a_t\|_{L^{4/3}(0,T;L^2(\Omega))}
+\|\nabla a\|_{L^2(0,T;L^4(\Omega))})  |\nabla u_{0}|_{L^4(\Omega)}^2 \nn \\
& +\|a\|_{L^{\infty}(0,T;L^{\infty}(\Omega))} 
\Bigl(|\nabla u_{0}|_{L^2(\Omega)}^2+|\nabla u_{1}|_{L^2(\Omega)} |\nabla u_{0}|_{L^2(\Omega)}\Bigr)\nn  \\
 &+((\frac{1}{2}+T^{3/4})\sqrt{T}\|a_t\|_{L^{4/3}(0,T;L^2(\Omega))})^{\frac{\qq+1}{\qq-1}}+|u_1|^2_{H^1(\Om)} +| u_1|^{q+1}_{W^{1,q+1}(\Om)}\nn \\
&+(T^{2}\|\nabla a\|_{L^2(0,T;L^4(\Omega))})^{\frac{\qq+1}{\qq-1}}
+ (T^{5/2}\|a_t\|_ {L^2(0,T;L^2(\Om))})^{\frac{\qq+1}{\qq-1}}  +|u_1|^2_{L^2(\hG)}\nn \\
&+\displaystyle \sum_{s=0}^1 \|\frac{d^s}{d t^s} g\|^{\frac{q+1}{q}}_{L^{\frac{q+1}{q}}(0,T;W^{-\frac{q}{q+1},\frac{q+1}{q}}(\Gamma))}+\| g\|^{\frac{q+1}{q}}_{L^{\infty}(0,T;W^{-\frac{q}{q+1},\frac{q+1}{q}}(\Gamma))}\Bigr)\nn
\end{align}
with $\tilde{a}$ defined in \eqref{W3_a2}, for some sufficiently small constants $\epsilon_1, \eta, \mu, \tau>0$
and some large enough $\overline{C}>0$. Note that here the second assumption in \eqref{W3_a2} on smallness of $a$ was not needed.
\begin{proposition} \label{prop:W3lin_gamma} 
Let $T>0$, $b>0$, $\alpha \geq 0$, $\delta \in (0,1)$, $\gamma>0$ and let the assumptions in Proposition \ref{prop:W3lina} hold with
\begin{align*}
\begin{cases}
\frac{b(1-\delta)}{2}-(\sqrt{T}+\frac{1}{2})\|a\|_{L^2(0,T;L^{\infty}(\Omega))}-\frac{T}{2}\|\nabla a\|_{L^2(0,T;L^2(\Omega))}>0, \ \text{for} \ q>1,\\
\hat{b}= \frac{b}{2}-(\frac{T}{2}+(C^{\Om}_{H^1,L^{\infty}})^2)\|\nabla a\|_{L^2(0,T;L^2(\Om))} -(\sqrt{T}+\frac{1}{2})\|a\|_{L^2(0,T;L^{\infty}(\Om))} >0,  \nn \\
\tilde{b}= \ \frac{\gamma}{2}-(C^{\Om}_{H^1,L^{\infty}})^2\|\nabla a\|_{L^2(0,T;L^2(\Omega))}>0, \ \text{for} \ q=1.
\end{cases}
\end{align*}
\noindent Then \eqref{W3lin_gamma} has a weak solution $u\in \tilde{X}$, with $\tilde{X}$ defined as in \eqref{W3_tildeX}, which satisfies the energy estimate \eqref{W3lin_lower_gamma} for $q>1$ and estimate \eqref{W3lin_lower_gamma_q=1} for $q=1$. \\
\noindent  If the assumptions in Proposition \ref{prop:W3linb} are satisfied with 
$$\tilde{a}=\frac{\underline{a}}{4}-\frac{1}{2}\|\nabla a\|_{L^2(0,T;L^4(\Om))} > 0,$$ then $u\in X$, with $X$ as in \eqref{W3_X}, \noindent and satisfies the energy estimate \eqref{W3lin_higher_gamma}.
\end{proposition}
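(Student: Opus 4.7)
The plan is to follow the Galerkin approximation strategy employed in Proposition~\ref{prop:W1lin} and Proposition~\ref{prop:W3lina}, adapting the energy estimates \eqref{W3lin_lower_gamma}, \eqref{W3lin_lower_gamma_q=1} and \eqref{W3lin_higher_gamma} already computed formally in the text leading up to the statement. I would first smoothly approximate the coefficient $a$ in time to produce a sequence $(a_k)$ with the regularity needed for classical ODE theory, then select a Galerkin basis $\{w_m\}$ of $W^{1,q+1}(\Om)$ that is also orthonormal in $L^2(\Om)$ and in $L^2(\hG)$, exactly as in step 1(a) of Proposition~\ref{prop:W1lin}. Solvability of the resulting discrete system on a maximal interval follows from classical ODE theory, and the forthcoming uniform energy estimates extend it to $[0,T]$.

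For the lower-order estimate, I would test the discretized equation with $u_{n,t}^{(k)}$ and integrate in time; the new $\gamma|u_t|^{q-1}u_t$ term contributes $\gamma\int_0^t |u_{n,t}^{(k)}|_{L^{q+1}(\Om)}^{q+1}\, ds$ on the left. To treat $\int u_t \nabla a\cdot\nabla u$ without invoking smallness of $\nabla a$, I would exploit the embedding $W^{1,q+1}(\Om)\hookrightarrow L^{\infty}(\Om)$ (valid for $q>d-1$) together with Young's inequality \eqref{Young}, partially absorbing the resulting $\|u_t\|_{W^{1,q+1}}$-type norm into the $\gamma$-term and into the $q$-Laplace contribution. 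The boundary integral is bounded via \eqref{W1_boundary_gamma1}, producing \eqref{W3lin_lower_gamma} for $q>1$; the $q=1$ case must be handled separately using $H^1\hookrightarrow L^{\infty}$ (only possible for $d=1$), yielding \eqref{W3lin_lower_gamma_q=1}. As the discussion after \eqref{W3lin_lower_gamma} emphasizes, the $\tilde{b}$-condition in \eqref{assumption1} is no longer required.

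To obtain \eqref{W3lin_higher_gamma}, I would test with $u_{n,tt}^{(k)}$ and integrate in time; the derivative of the new damping contributes $\frac{d}{dt}\frac{\gamma}{q+1}\int_\Om |u_t|^{q+1}\, dx$ on the left, which produces the control $\mu\frac{\gamma}{2(q+1)}\|u_t\|^{q+1}_{L^{\infty}(0,T;L^{q+1}(\Om))}$ after adding $\mu$ times the lower-order estimate. The boundary term is estimated via \eqref{W1_boundary_gamma2}, while $\int a_t|\nabla u|^2\, dx\, ds$ is treated through the embedding $L^{q+1}(\Om)\hookrightarrow L^4(\Om)$ (forcing $q\geq 3$) precisely as in the calculation leading to \eqref{W3lin_a}. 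Choosing $\tau,\eta,\mu,\epsilon_1>0$ sufficiently small makes all the coefficients on the left of \eqref{W3lin_higher_gamma} positive, and crucially the $\gamma$-term absorbs contributions that previously required smallness of $\|\nabla a\|$, so the second condition in \eqref{W3_a2} can indeed be dropped.

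The main obstacle is the passage to the limit $n\to\infty$ and then $k\to\infty$ in the two nonlinear damping terms $|\nabla u_t|^{q-1}\nabla u_t$ and $|u_t|^{q-1}u_t$. Owing to the uniform estimates, both sequences are bounded in appropriate reflexive $L^{\frac{q+1}{q}}$ spaces, so weak limits exist; identification of these weak limits with $|\nabla u_t|^{q-1}\nabla u_t$ and $|u_t|^{q-1}u_t$ proceeds via a Minty-type monotonicity argument based on \eqref{pLaplace_ineq} applied to both nonlinearities (as in \eqref{wsig_nonneg}). Once the identification is secured, the limit solves \eqref{W3lin_gamma} weakly, and the two energy estimates transfer to it by weak lower semicontinuity of norms, mirroring steps 1(c) and 2 of the proof of Proposition~\ref{prop:W1lin}.
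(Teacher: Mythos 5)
Your proposal follows essentially the same route as the paper: the energy identities obtained by testing with $u_t$ and $u_{tt}$, the absorption of the $\int u_t\,\nabla a\cdot\nabla u$ term into the $\gamma$- and $q$-Laplace contributions via $W^{1,q+1}(\Om)\hookrightarrow L^{\infty}(\Om)$ and Young's inequality (which is exactly how the smallness condition on $\nabla a$ is removed), the boundary estimates \eqref{W1_boundary_gamma1} and \eqref{W1_boundary_gamma2}, the separate $q=1$, $d=1$ case, and the Galerkin limit passage using monotonicity of both damping nonlinearities all mirror the derivation of \eqref{W3lin_lower_gamma}, \eqref{W3lin_lower_gamma_q=1} and \eqref{W3lin_higher_gamma} in the text preceding the proposition. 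Your explicit mention of the Minty-type identification of the weak limits is slightly more detailed than what the paper records, but it is the same argument the paper implicitly relies on.
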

\noindent We will now proceed to investigate existence of solutions for \eqref{W3}.
\begin{theorem}\label{thm:W3}
Let $c^2$, $b>0$, $\alpha \geq 0$, $\delta \in (0,1)$, $\tilde{k}\in\R$, $\qq\geq3$, $g \in L^{\infty}(0,T;W^{-\frac{q}{q+1},\frac{q+1}{q}}(\Gamma))$, $g_t \in L^{\frac{q+1}{q}}(0,T;W^{-\frac{q}{q+1},\frac{q+1}{q}}(\Gamma))$. There exist $\kappa>0$, $T>0$  such that for all  $u_0 \in W^{1,4}(\Om)$, $u_1 \in W^{1,q+1}(\Om)$, with
\begin{align*}
&C_{\Gamma}(g) +|\nabla u_0|^2_{L^2(\Om)} +|u_1|^2_{H^1(\Om)}+|u_1|^{q+1}_{W^{1,q+1}(\Om)} +|u_1|^2_{L^2(\hG)}
\leq \kappa^2
\end{align*}
there exists a weak solution $u \in \cW$ of \eqref{W3} where
\begin{equation}\label{defcW_W3}
\begin{split}
\cW =\{v\in X 
:& \ \|v_{tt}\|_{L^2(0,T;L^2(\Omega))}\leq \overline{m} \\
& \wedge \|v_t\|_{L^{\infty}(0,T;H^1(\Omega))}\leq \overline{m} \\
& \wedge \|\nabla v_t\|_{L^{\infty}(0,T;L^{q+1}(\Omega))}\leq \overline{M}\}\,,
\end{split}
\end{equation}
with $\overline{m}$ and $\overline{M}$ sufficiently small, and $C_{\Gamma}(g)$ is defined as in \eqref{Cg}.
\end{theorem}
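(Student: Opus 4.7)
The plan is to mimic the fixed-point strategy used for Theorem \ref{thm:W1} and Theorem \ref{thm:W2}, now building on the linear result of Proposition \ref{prop:W3linb}. Define $\cT:\cW\to X$ by $v\mapsto \cT v = u$, where $u$ solves \eqref{W3lin} with
\begin{align*}
a \;=\; \frac{c^2}{1-2\tilde{k} v_t}\,.
\end{align*}
The first task is to check that $a$ falls into the hypothesis class of Proposition \ref{prop:W3linb}. Using $\eqref{W3Poincare}$ together with $\|\nabla v_t\|_{L^{\infty}(0,T;L^{q+1}(\Om))}\le \overline{M}$ and $\|v_t\|_{L^{\infty}(0,T;H^1(\Om))}\le\overline{m}$, one bounds $\|v_t\|_{L^{\infty}(0,T;L^{\infty}(\Om))}$ by $C^{\Om}_{W^{1,q+1},L^{\infty}}[(1+C_P)\overline{M}+C_2^{\Om}\overline{m}]$, so after imposing
\begin{align*}
2|\tilde{k}|\,C^{\Om}_{W^{1,q+1},L^{\infty}}\bigl[(1+C_P)\overline{M}+C_2^{\Om}\overline{m}\bigr]\;<\;1
\end{align*}
we obtain $0<\underline{a}\le a\le\overline{a}$ uniformly in $\Omega\times(0,T)$.

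Next I would verify the remaining regularity/smallness conditions on $a$. From the chain rule,
\begin{align*}
a_t \;=\; \frac{2\tilde{k}c^2 v_{tt}}{(1-2\tilde{k}v_t)^2},
\qquad
\nabla a \;=\; \frac{2\tilde{k}c^2 \nabla v_t}{(1-2\tilde{k}v_t)^2},
\end{align*}
and using $v\in\cW$ together with the embedding $L^{q+1}(\Om)\hookrightarrow L^4(\Om)$ valid for $q\ge 3$, one estimates $\|a_t\|_{L^2(0,T;L^2(\Om))}\lesssim \overline{m}$, $\|a_t\|_{L^{4/3}(0,T;L^2(\Om))}\lesssim T^{1/4}\overline{m}$, and $\|\nabla a\|_{L^2(0,T;L^4(\Om))}\lesssim T^{1/2}C^{\Om}_{L^{q+1},L^4}\overline{M}$, with proportionality constants depending only on $|\tilde{k}|$, $c^2$ and $\underline{a}$. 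Choosing first $\overline{m},\overline{M}$ small and then $T$ small, the quantities $\hat b$, $\tilde b$, $\tilde a$ appearing in \eqref{assumption1} and \eqref{W3_a2} stay positive, so both Proposition \ref{prop:W3lina} and Proposition \ref{prop:W3linb} apply and yield $u\in X$ together with the higher-order energy estimate \eqref{W3lin_higher}.

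The self-mapping property $\cT(\cW)\subseteq\cW$ then follows by reading off the three terms $\|u_{tt}\|_{L^2(L^2)}^2$, $\|\nabla u_t\|_{L^{\infty}(L^2)}^2$ and $\|\nabla u_t\|_{L^{\infty}(L^{q+1})}^{q+1}$ on the left-hand side of \eqref{W3lin_higher}, and imposing $\kappa$ small enough so that the right-hand side (which is bounded by a constant multiple of $\kappa^2$ plus $T$-powers of $\overline{m},\overline{M}$) is dominated by $\min\{\overline{m}^2,\overline{M}^{q+1}\}$ times the positive coefficients on the left. Note that $\|u_t\|_{L^{\infty}(L^2)}\le\overline{m}$ is absorbed by the $\|\nabla u_t\|_{L^{\infty}(L^2)}$ bound after adding $\|u_t\|_{L^{\infty}(L^2)}^2$ via Poincar\'e-type bookkeeping from $u_t(0)=u_1$.

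For existence I would then, following the argument used in the proof of Theorem \ref{thm:W2}, construct fixed-point iterates $u^n=\cT u^{n-1}$ with $u^0$ compatible with the initial/boundary data. Since $\cW$ is bounded in a reflexive/separable dual space and convex-closed under weak-* limits, some subsequence converges weakly-* to a $\bar u\in\cW$, and a compactness argument (Aubin--Lions together with continuity of the superposition $w\mapsto |\nabla w|^{q-1}\nabla w$ on suitable compact embeddings) identifies $\bar u$ as a weak solution of \eqref{W3}; the passage to the limit in the term $\frac{c^2}{1-2\tilde{k}u^{n-1}_t}\Delta u^n$ is handled through strong convergence of $u^n_t$ in $L^2(0,T;L^2(\Om))$, which in turn controls the denominator away from zero uniformly in $n$.

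The main obstacle is twofold. The quantitative one is the simultaneous tuning of $\kappa$, $T$, $\overline{m}$, $\overline{M}$: the definition $a=c^2/(1-2\tilde{k}v_t)$ forces us to go through the $L^{\infty}$-bound \eqref{W3Poincare}, and the powers $T^{5/2}$, $T^2$, $T^{3/4}$ appearing in \eqref{W3lin_higher} (and in the control of $a_t,\nabla a$) must be balanced against the positive coefficients in \eqref{assumption1}--\eqref{W3_a2}; this is exactly why only small $T$, not arbitrary $T$, can be allowed. The structural obstacle is uniqueness: as the Remark after Theorem \ref{thm:W2} and the discussion in \cite{brunn} point out, differencing two weak solutions and testing with $\hat u_t$ produces the nonnegative monotonicity term via \eqref{pLaplace_ineq} but leaves no room to close the higher-order estimate needed for a Gr\"onwall argument in $X$, so uniqueness is not asserted here.
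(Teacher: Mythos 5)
Your proposal follows essentially the same route as the paper's proof: the fixed-point operator $\cT v=u$ with $a=c^2/(1-2\tilde{k}v_t)$, verification of the hypotheses of Proposition \ref{prop:W3linb} via \eqref{W3Poincare} and the chain-rule bounds $\|\nabla a\|_{L^2(0,T;L^4(\Om))}\lesssim \sqrt{T}\,\overline{M}$, $\|a_t\|_{L^{4/3}(0,T;L^2(\Om))}\lesssim \sqrt[4]{T}\,\overline{m}$, the self-mapping property read off from \eqref{W3lin_higher} after tuning $\overline{m},\overline{M},T,\kappa$, and existence by weak-* compactness of $\cW$. The argument and the constants match the paper's; no gaps.
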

\begin{proof}
We define an operator $\cT :\cW \to X$, $v\mapsto \cT v=\psii$ where $u$ solves \eqref{W3lin} with 
\begin{align}
a=\frac{c^2}{1-2\tilde{k}v_t}.
\end{align}
From \eqref{W3Poincare}, we obtain for $v \in \cW$ 
\begin{align*}
\|2\tilde{k} v_t\|_{L^{\infty}(0,T;L^{\infty}(\Om))}\leq 2 |\tilde{k}| C_{W^{1,q+1},L^{\infty}}^{\Om}\Bigl[(1+C_{P})\overline{M}+C_2^{\Om}\overline{m} \Bigr],
\end{align*}
and, assuming $2 |\tilde{k}| C_{W^{1,q+1},L^{\infty}}^{\Om}\Bigl[(1+C_{P})\overline{M}+C_2^{\Om}\overline{m} \Bigr]<1$, we can verify hypothesis of Proposition \ref{prop:W3linb}:
\begin{align*}
 a(t,x)\geq& \ \frac{c^2}{1+2|\tilde{k}|\|v_t\|_{L^{\infty}(0,T;L^{\infty}(\Omega))}} \\
  \geq& \ \frac{c^2}{1+2|\tilde{k}|C^{\Om}_{W^{1,\qq+1},L^{\infty}}((1+C_{P})\overline{M}+C_2^{\Om}\overline{m})} := \underline{a},
\\ 
\|a\|_{L^{\infty}(0,T;L^{\infty}(\Omega))} \leq& \ \frac{c^2}{1-2|\tilde{k}|\|v_t\|_{L^{\infty}(0,T;L^{\infty}(\Omega))}} \\
\leq & \ \frac{c^2}{1-2|\tilde{k}|C_{W^{1,\qq+1},L^{\infty}}^\Omega \Bigl[(1+C_{P})\overline{M}+C_2^{\Om}\overline{m} \Bigr]}, 
\\  
\|\nabla a\|_{L^2(0,T;L^4(\Omega))}=& \ \|\frac{2\tilde{k}c^2}{(1-2\tilde{k}v_t)^2}\nabla v_t\|_{L^2(0,T;L^4(\Omega))}\\
 \leq& \ \frac{2|\tilde{k}|c^2}{(1-2|\tilde{k}|C_{W^{1,\qq+1},L^{\infty}}^\Omega((1+C_{P})\overline{M}+C_2^{\Om}\overline{m}))^2}
 C_{L^{q+1},L^4}^\Omega \sqrt{T}\overline{M}, 
\\
\|a_t\|_{L^{4/3}(0,T;L^2(\Omega))}=& \ \|\frac{2\tilde{k}c^2}{(1-2\tilde{k}v_t)^2} v_{tt}\|_{L^{4/3}(0,T;L^2(\Omega))}\\ 
\leq& \ \frac{2|\tilde{k}|c^2}{(1-2|\tilde{k}|C_{W^{1,\qq+1},L^{\infty}}^\Omega ((1+C_{P})\overline{M}+C_2^{\Om}\overline{m}))^2}
\sqrt[4]{T} \overline{m}, \\
\|a_t\|_{L^2(0,T;L^2(\Omega))}  \leq& \ \frac{2|\tilde{k}|c^2}{(1-2|\tilde{k}|C_{W^{1,\qq+1},L^{\infty}}^\Omega ((1+C_{P})\overline{M}+C_2^{\Om}\overline{m}))^2} \overline{m}.
\end{align*}
It follows that assumptions are satisfied provided $\overline{m}$, $\overline{M}$, $\kappa$ and $T$ are sufficiently small such that
\begin{align*}
& 2|\tilde{k}|C_{W^{1,q+1},L^{\infty}}^{\Om} \Bigl[(1+C_{P})\overline{M}+C_2^{\Om}\overline{m} \Bigr]<1,  \\
& \frac{2|\tilde{k}|c^2}{(1-2|\tilde{k}|C_{W^{1,\qq+1},L^{\infty}}^\Omega((1+C_{P})\overline{M}+C_2^{\Om}\overline{m}))^2}
 C_{L^{q+1},L^4}^\Omega \sqrt{T}\overline{M} \\
\leq& \  \frac{c^2}{2(1+2|\tilde{k}|C^{\Om}_{W^{1,\qq+1},L^{\infty}}((1+C_{P})\overline{M}+C_2^{\Om}\overline{m}))}, \  \text{and} \\
&\frac{2|\tilde{k}|c^2 T^{3/2}\overline{M}(C^{\Om}_{L^{q+1},L^4})^3(C_2^{\Om})^2}{(1-2|\tilde{k}|C_{W^{1,\qq+1},L^{\infty}}^\Omega((1+C_{P})\overline{M}+C_2^{\Om}\overline{m}))^2}< \frac{1}{4}. 
\end{align*}
Therefore the energy estimate \eqref{W3lin_higher} is satisfied and we have
\begin{align*}
&\mu\Bigl[\frac{1}{2}-\tau\|\nabla a\|_{L^2(0,T;L^4(\Om))}\Bigr]\|u_{tt}\|_{L^2(0,T;L^2(\Om))}^2+\mu \frac{b(1-\delta)}{8} \|\nabla u_t\|_{L^{\infty}(0,T;L^2(\Omega))}^2 \\ &+\Bigl[\frac{b\delta}{2}-\epsilon_1(\mu+1)\Bigr]\|\nabla u_t\|_{L^{q+1}(0,T;L^{q+1}(\Om))}^{q+1} + \Bigl[\tilde{b}- \epsilon_0(\mu+1)\Bigr]\|u_t\|^2_{L^{\infty}(0,T;L^2(\Om)} \nn \\
&+\Bigl[\frac{b(1-\delta)}{2} -\mu \|a\|_{L^{\infty}(0,T;L^{\infty}(\Om))}\Bigr]\|\nabla u_t\|_{L^2(0,T;L^2(\Omega))}^2  
 \nn \\
 &+ \Bigl[\tilde{a}-\mu\frac{2}{b(1-\delta)}\|a\|^2_{L^{\infty}(0,T;L^{\infty}(\Om))}\Bigr] \|\nabla u\|^2_{L^{\infty}(0,T;L^2(\Om))}+\frac{\alpha}{2}\|u_t\|^2_{L^2(0,T;L^2(\hG))} \nn \\
&  +\Bigl[\mu \frac{b\delta}{2(q+1)}-\eta(2\mu+1) \Bigr]\|\nabla u_t\|_{L^{\infty}(0,T;L^{q+1}(\Om))}^{\qq+1}+\mu \frac{\alpha}{4} \|u_t\|^2_{L^{\infty}(0,T;L^2(\hG))} \nn \\
\leq& \  \tilde{C} \Bigl(
(T\sqrt{T}\overline{M})^{\frac{\qq+1}{\qq-1}}+ |\nabla u_{0}|^2_{L^2(\Omega)} +|u_{1}|_{H^1(\Omega)}^2 
+ (T^{5/2}\overline{M})^{\frac{\qq+1}{\qq-1}} \nn \\
&+(\sqrt[4]{T} \overline{m}+\sqrt{T}\overline{m}+\sqrt{T}\overline{M}) |\nabla u_{0}|_{L^4(\Omega)}^2 +| u_1|^{q+1}_{W^{1,q+1}(\Om)} +|u_1|^2_{L^2(\hG)}\nn \\
&+((\frac{1}{2}+T^{3/4})T^{3/4} \overline{m})^{\frac{\qq+1}{\qq-1}} +(T^{5/2}\overline{m})^{\frac{\qq+1}{\qq-1}}+C_{\Gamma}(g)
 \Bigr),
\end{align*}
 for some large enough $\tilde{C}$, and hence if $T$ and the bound $\kappa$ are sufficiently small, and we choose
$\overline{m}$ and $\overline{M}$ appropriately, $\cT$ is a self-mapping. Since $\cW$ is closed, we obtain existence of solutions through compactness argument.
\end{proof}
\noindent Relying on Proposition \ref{prop:W3lin_gamma}, we can obtain local existence of solutions for the problem \eqref{W3_gamma} with $\gamma>0$.
\begin{theorem}\label{thm:W3_gamma}
Let the assumptions of Theorem \ref{thm:W3} hold and $\gamma >0$. There exist $\kappa>0$, $T>0$  such that for all $u_0 \in W^{1,4}(\Om)$, $u_1 \in W^{1,q+1}(\Om)$, with
\begin{align*}
&\displaystyle \sum_{s=0}^1 \|\frac{d^s}{d t^s} g\|^{\frac{q+1}{q}}_{L^{\frac{q+1}{q}}(0,T;W^{-\frac{q}{q+1},\frac{q+1}{q}}(\Gamma))} +\|g\|^{\frac{q+1}{q}}_{L^{\infty}(0,T;W^{-\frac{q}{q+1},\frac{q+1}{q}}(\Gamma)))} \\
& +|\nabla u_0|^2_{L^2(\Om)}+|u_1|^2_{H^1(\Om)}  +|u_1|^{q+1}_{W^{1,q+1}(\Om)} +|u_1|^2_{L^2(\hG)}
\leq \kappa^2
\end{align*}
there exists a weak solution $u \in \cW$ of \eqref{W3_gamma}, where $\cW$ is defined as in \eqref{defcW_W3}, and $\overline{m}$ and $\overline{M}$ are sufficiently small.
\end{theorem}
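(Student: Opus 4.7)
The plan is to follow verbatim the fixed-point scheme used in the proof of Theorem \ref{thm:W3}, with Proposition \ref{prop:W3linb} replaced by Proposition \ref{prop:W3lin_gamma} so that the extra lower-order damping $\gamma|u_t|^{q-1}u_t$ is absorbed correctly on the left-hand side of the energy inequality. Define $\cT:\cW\to X$, $v\mapsto u$, where $u$ solves the linearized problem \eqref{W3lin_gamma} with
\[
a=\frac{c^{2}}{1-2\tilde{k}v_t}.
\]
For $v\in\cW$, the embedding estimate \eqref{W3Poincare} yields
\[
\|2\tilde{k}v_t\|_{L^{\infty}(0,T;L^{\infty}(\Om))}\le 2|\tilde{k}|\,C^{\Om}_{W^{1,q+1},L^{\infty}}\bigl[(1+C_P)\overline{M}+C_2^{\Om}\overline{m}\bigr],
\]
and once the right-hand side is made strictly less than one by choosing $\overline{m},\overline{M}$ small, $a$ admits uniform upper and lower bounds $\underline{a},\overline{a}$ as in Theorem \ref{thm:W3}.

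Next I would compute the auxiliary norms of $a$ exactly as in that theorem, using $\|\nabla v_t\|_{L^{\infty}(0,T;L^{q+1}(\Om))}\le\overline{M}$ and $\|v_{tt}\|_{L^{2}(0,T;L^{2}(\Om))}\le\overline{m}$, together with the embedding $L^{q+1}(\Om)\hookrightarrow L^{4}(\Om)$ for $q\ge 3$. Each of $\|\nabla a\|_{L^{2}(0,T;L^{4}(\Om))}$, $\|a_t\|_{L^{4/3}(0,T;L^{2}(\Om))}$, $\|a_t\|_{L^{2}(0,T;L^{2}(\Om))}$ then carries a positive power of $T$ together with $\overline{m}$ or $\overline{M}$. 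By choosing $T$, $\overline{m}$, $\overline{M}$ sufficiently small, the single standing hypothesis
\[
\tilde{a}=\frac{\underline{a}}{4}-\frac{1}{2}\|\nabla a\|_{L^{2}(0,T;L^{4}(\Om))}>0
\]
of Proposition \ref{prop:W3lin_gamma} is verified. The estimate \eqref{W3lin_higher_gamma} then applies; its right-hand side is controlled by a constant times $\kappa^{2}+(\text{positive powers of }T,\overline{m},\overline{M})$, so that by shrinking these parameters further all coefficients on the left of \eqref{W3lin_higher_gamma} remain positive and the resulting bounds place $u=\cT v$ back in $\cW$, proving the self-mapping property.

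For existence I would then repeat the compactness argument already used in Theorem \ref{thm:W2}: the sequence of iterates $u^{n}=\cT u^{n-1}$ lies in $\cW$, which is closed and bounded in the dual of a separable Banach space hence weakly-$\ast$ compact. A weakly-$\ast$ convergent subsequence has a limit $\bar{u}\in\cW$; passing to the limit in the weak formulation uses Aubin–Lions to gain strong convergence of $u_t^{n}$ in suitable $L^{p}$ spaces (enough to handle $|u_t^{n}|^{q-1}u_t^{n}$ and the coefficient $c^{2}/(1-2\tilde{k}u_t^{n})$), together with the monotonicity inequality \eqref{pLaplace_ineq} for the $q$-Laplace damping.

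The main obstacle is the same one that forced smallness of $T$ in Theorem \ref{thm:W3}: avoiding degeneracy of the factor $1-2\tilde{k}u_t$ depends on \eqref{W3Poincare}, which couples $\overline{m}$, $\overline{M}$ and $\kappa$ through $T$-dependent embedding constants, so the time horizon cannot be freed even though the $\gamma$-term relaxes the smallness assumption on $a$ in the linear estimate \eqref{W3lin_lower_gamma}. As in Theorem \ref{thm:W2}, uniqueness is not obtainable by this route because testing the difference of two solutions with $\hat{u}_t$ does not give a closed Gronwall-type bound in the presence of the $q$-Laplace damping.
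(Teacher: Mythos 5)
Your proposal is correct and follows essentially the same route as the paper: the paper proves this theorem by repeating the fixed-point argument of Theorem \ref{thm:W3} verbatim, with Proposition \ref{prop:W3lin_gamma} and the energy estimate \eqref{W3lin_higher_gamma} substituted for Proposition \ref{prop:W3linb} and \eqref{W3lin_higher}, verifying the hypotheses on $a=c^2/(1-2\tilde{k}v_t)$ via \eqref{W3Poincare} and the bounds defining $\cW$, and concluding by the same weak-$\ast$ compactness argument on the closed set $\cW$. Your observations that the $\gamma$-term only relaxes the smallness assumption on $a$ in the lower-order estimate while the restriction on $T$ persists, and that uniqueness remains open because of the $q$-Laplace damping, match the paper's remarks exactly.
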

\indent Due to the presence of $q-$Laplace damping term, the derivation of energy estimates is possible only for multipliers of lower order (see Remark 4, \cite{brunn}) and the question of uniqueness remains open. 

\bigskip
\noindent
{\bf Acknowledgments.} The author thanks Barbara Kaltenbacher for many fruitful discussions and comments. The financial support by the FWF (Austrian Science Fund) under grant P24970 is gratefully acknowledged.

\medskip
\end{document}